\documentclass[
	english
]{scrartcl}

\usepackage[T1]{fontenc}
\usepackage[utf8]{inputenc}

\usepackage{amsmath,amsfonts,amsthm,amssymb}
\usepackage{xcolor}

\PassOptionsToPackage{hyphens}{url}
\usepackage[colorlinks,citecolor=blue,linkcolor=blue]{hyperref}

\usepackage{preamble}
\usepackage{marginnote}

\usepackage{enumitem}
\setlist{itemsep=0em} 
\setlist[enumerate]{label=(\roman*)}

\usepackage[capitalise,nameinlink]{cleveref}

\usepackage{soul,cancel}

\usepackage{graphicx}
\usepackage{caption}
\usepackage{subcaption}

\usepackage{microtype} 
\usepackage{lmodern}   

\usepackage{pgfplots}


\newif\ifbiber
\bibertrue
\biberfalse

\ifbiber
\usepackage[
	style=authoryear-comp,
	sorting=nyt,
	url=true,
	doi=true,                
	eprint=true,
	giveninits=true,         
	uniquename=allinit,        
	maxbibnames=99,          
	maxcitenames=2,
	uniquelist=minyear,
	dashed=false,            
	sortcites=false,          
	backend=biber,           
	safeinputenc,            
]{biblatex}

\addbibresource{references.bib}

\DeclareCiteCommand{\cite}{%
	\ifbibmacroundef{cite:init}{}{\usebibmacro{cite:init}}\usebibmacro{prenote}%
}{%
	\usebibmacro{citeindex}%
	\printtext[bibhyperref]{\usebibmacro{cite}}%
}{%
	\ifbibmacroundef{cite:init}{\multicitedelim}{}%
}{%
	\usebibmacro{postnote}%
}%
\DeclareCiteCommand{\parencite}[\mkbibbrackets]{%
	\ifbibmacroundef{cite:init}{}{\usebibmacro{cite:init}}\usebibmacro{prenote}%
}{%
	\usebibmacro{citeindex}%
	\printtext[bibhyperref]{\usebibmacro{cite}}%
}{%
	\ifbibmacroundef{cite:init}{\multicitedelim}{}%
}{%
	\usebibmacro{postnote}%
}%

\let\cite\parencite

\DeclareNameAlias{sortname}{last-first}
\DeclareNameAlias{default}{last-first}



\else

\usepackage{doi}

\fi


\newcommand\N{\mathbb{N}}

\newcommand\R{\mathbb{R}}

\newcommand\MM{\mathcal{M}}

\newcommand{\eps}{\varepsilon}

\renewcommand\d{\mathrm{d}}
\newcommand\dx{\d x}
\newcommand\ds{\d s}
\newcommand\dt{\d t}

\newcommand{\weakly}{\rightharpoonup}
\newcommand{\weaklystar}{\stackrel\star\rightharpoonup}

\newcommand\dualspace{^\star}

\DeclareMathOperator*{\esssup}{ess\,sup}
\DeclareMathOperator*{\essinf}{ess\,inf}
\DeclareMathOperator*{\argmax}{arg\,max}

\DeclareMathAlphabet{\mathpzc}{OT1}{pzc}{m}{it}

\newcommand{\lse}{\mathrm{LSE}}
\newcommand{\lie}{\mathrm{LIE}}

\newcommand{\supp}{\operatorname{supp}}

\newtheorem{theorem}{Theorem}[section]
\newtheorem{lemma}[theorem]{Lemma}

\newtheorem{assumption}[theorem]{Assumption}

\newtheorem{corollary}[theorem]{Corollary}
\newtheorem{remark}[theorem]{Remark}
\newtheorem{definition}[theorem]{Definition}

\crefname{assumption}{Assumption}{Assumptions}


\graphicspath{{Plots_piecew_lin/}{Plots_sin/}}

\definecolor{darkgreen}{rgb}{0,0.5,0}
\definecolor{darkred}{rgb}{0.8,0,0}

\usepackage[normalem]{ulem}
\newcommand{\deleted}[1]{}
\newcommand{\added}[1]{#1}

\newcommand{\trp}{\text{\sffamily\slshape T}}
\newcommand{\diag}[1]{\operatorname{diag}\left(#1\right)}

\begin{document}
\title{Optimal control of ODEs with state suprema}
\author{%
	Tobias Geiger%
	\footnote{%
	Institut f\"ur Mathematik,
Universit\"at W\"urzburg,
97074 W\"urzburg, Germany, \email{tobias.geiger@mathematik.uni-wuerzburg.de}
and \email{daniel.wachsmuth@mathematik.uni-wuerzburg.de}
	}
	\and
	Daniel Wachsmuth%
  \footnotemark[1]
	\and
	Gerd Wachsmuth%
	\footnote{%
		Brandenburgische Technische Universität Cottbus-Senftenberg,
		Institute of Mathematics,
		Chair of Optimal Control,
		03046 Cottbus,
		Germany,
		\url{https://www.b-tu.de/fg-optimale-steuerung},
		\email{gerd.wachsmuth@b-tu.de}%
	}%
	\orcid{0000-0002-3098-1503}%
}
\publishers{}
\maketitle
\begin{abstract}
 We consider the optimal control of a differential equation
 that involves the suprema of the state over some part of the history.
 In many applications, this non-smooth functional dependence
 is crucial for the successful modeling of real-world phenomena.
 We prove the existence of solutions and show that related
 problems may not possess optimal controls.
 Due to the non-smoothness in the state equation,
 we cannot obtain optimality conditions via standard theory.
 Therefore, we regularize the problem
 via a LogIntExp functional which generalizes the well-known LogSumExp.
 By passing to the limit with the regularization,
 we obtain an optimality system for the original problem.
 The theory is illustrated by some
 numerical experiments.
\end{abstract}

\begin{keywords}
 functional differential equations,
 differential equations with state suprema,
 optimality conditions,
 maximum principle,
 LogIntExp
\end{keywords}

\begin{msc}
 \mscLink{49K21},
 \mscLink{34K35},
 \mscLink{49J21}
\end{msc}

\section{Introduction}
\label{sec:introduction}

In this paper, we study optimal control problems with differential equations
that involve the suprema of the state. To be precise,
the state equation is given by
\begin{equation}\label{eq001}
 x'(t) = F\parens[\Big]{x(t), \max_{s \in [t-\tau,t]} x(s), u(t) },
 \qquad t \in (0,T)
\end{equation}
with initial data
\begin{equation}\label{eq002}
 x(t) = \phi(t), \ t\in [-\tau,0].
\end{equation}
Here, $T > 0$ is the final time \added{and} $\tau>0$ is the parameter for the state suprema.
The control $u$ has to be chosen optimal subject to an objective functional of Lagrange type
\begin{equation}
  \label{eq:objective}
  J(x,u) = \int_0^T j(t, x(t), u(t)) \, \dt
\end{equation}
and the control has to obey the constraints
\begin{equation}
 \label{eq:controlconstraint}
 u(t) \in U,
 \qquad t \in (0,T).
\end{equation}
For the precise assumptions, we refer to \cref{subsec:standing_asm} below.

Control of such systems \added{is} proposed by
\cite{AzhmyakovAhmedVerriest16,VerriestAzhmyakov2017}.
Differential equations with state suprema have an abundance of applications, we name
only solar power plant control \cite{AzhmyakovAhmedVerriest16}, population genetics \cite{Wu1996}, and quantum chemistry \cite{BainovHristova2011}.
Input-to-output stability of such systems \added{is} addressed in \cite{DASHKOVSKIY201712925}.
Systems with maximum are also popular in machine learning \cite{Goodfellow2016}, by means of the so-called max-pooling.

The evolution system \eqref{eq001} is inherently non-smooth due to the appearance of the $\max$-term.
This makes the development of necessary optimality conditions challenging. While the celebrated
Pontryagin maximum principle can account for non-smooth functions of the control $u$, this is not the case
for \added{systems} with non-smooth functions applied to the state $x$.
Therefore,
the main contribution of the present work
is the extension of \cite{Banks69}
in which a Pontryagin maximum principle is derived
for systems with a smooth functional dependence.
In \cite{ClarkeWolenski1996},
the authors studied the optimal control of a nonsmooth functional differential inclusion.
After some tedious but straightforward transformations,
the control problem \eqref{eq001}--\eqref{eq:controlconstraint} can be put in their framework
and they obtain an optimality system similar to our \cref{thm:opt_sys}.
However, our regularization approach allows for a numerical realization
and we also expect that our strategy for deriving optimality conditions
can be transferred to control problems of partial differential equations
involving state suprema.

The paper is structured as follows.
In \cref{sec:prelim}
we define some notation and
fix the standing assumption (\cref{asm:standing}).
The existence of solutions of the state equation \eqref{eq001}
and of the optimal control problem
is proven in \cref{sec:existence}.
Moreover, we show in \cref{subsec:nonexistence}
that optimal control problems
\added{
as in \cite{AzhmyakovAhmedVerriest16}
}
may lack optimal solutions.
In order to define a regularization of the state equation,
we define a LogIntExp regularization
by generalizing the well known LogSumExp function,
see \cref{sec:logintexp}.
In \cref{sec:reg},
we study the regularized problems
and we pass to the limit with the regularization
in \cref{sec:limit}.
With this technique, we arrive at the optimality system in
\cref{thm:opt_sys}
and this allows us to derive
jump conditions for the adjoint state
in
\cref{lem:jumps}.
Finally, we present some numerical examples in \cref{sec:numerics}.

\section{Notations, preliminaries and standing assumptions}
\label{sec:prelim}
\subsection{Notation}
Let us define abbreviations for time intervals
\[
 I:=[0, T], \quad I_\tau:=[-\tau,T].
\]
For a time $t \in I$ and a function $x \in C(I_\tau; \R^{\added{n}})$,
we define $x_t \in C([0,\tau]; \R^{\added{n}})$ via
\begin{equation*}
 x_t(s)
 :=
 x(t - s)
 \qquad\forall s \in [0,\tau].
\end{equation*}
This notation implies
\begin{equation*}
 \max_{s \in [t-\tau,t]} x(s)
 =
 \max_{s \in [0,\tau]} x_t(s)
 =
 \max x_t.
\end{equation*}
Note that this maximum is evaluated component-wise.
Let us define
\[
 C_\tau:= C([0,\tau];\R^n),
\]
which is \added{equipped} with the max-norm.

We will frequently use scalar functions applied to vectors. For vectors $v,w\in \R^n$, we denote by
$\exp v$ and $\frac vw$ the component-wise exponentiation and division. Moreover, $v\odot w$
denotes the Hadamard (or component-wise) product.

\subsection{Preliminaries}

Due to the retarded structure of the state equation,
we need a special integral inequality,
which is very similar to Gronwall's lemma.
\begin{lemma}
 \label{lem:max_gronwall}
 Let $x \in C(I_\tau; \R)$ be a continuous function
 with $x \equiv 0$ on $[-\tau,0]$.
 Suppose that
 there exist constants $k_1, k_2\ge 0$
 such that
 \begin{equation*}
  x(t) \le k_1 + k_2 \, \int_0^t x(s) + \max x_s \, \ds
 \end{equation*}
 holds for all $t \in I$.
 Then,
 \begin{equation*}
  x(t) \le k_1 \, \exp(2 \, k_2 \, t)
 \end{equation*}
 holds for all $t \in I$.
\end{lemma}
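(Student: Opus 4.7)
The plan is to introduce the running maximum
\[
 y(t) := \max_{s \in [-\tau, t]} x(s),
\]
which is continuous and non-decreasing, with $y(t) \ge x(t)$, and because $x \equiv 0$ on $[-\tau,0]$ we have $y(t) \ge 0$ for all $t \in I_\tau$. The point of passing to $y$ is that it dominates both $x(s)$ and $\max x_s$ simultaneously, which will let me reduce the mixed delay inequality to a standard one.

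First, I would observe that for $s \in I$ the interval $[s-\tau, s]$ is contained in $[-\tau, s]$, so
\[
 \max x_s = \max_{r \in [s-\tau,s]} x(r) \le y(s),
\]
and of course $x(s) \le y(s)$. Plugging these into the hypothesis gives
\[
 x(t) \le k_1 + 2 k_2 \int_0^t y(s) \, \ds
\qquad\forall t \in I.
\]
The right-hand side is non-decreasing in $t$ (as $y \ge 0$), so taking the maximum over $[0,t]$ on the left preserves the inequality; combining this with the facts $x \equiv 0$ on $[-\tau,0]$ and $k_1 \ge 0$ yields
\[
 y(t) = \max\bigl(0, \max_{s \in [0,t]} x(s)\bigr) \le k_1 + 2 k_2 \int_0^t y(s) \, \ds
\qquad\forall t \in I.
\]

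Now $y$ satisfies the classical Gronwall hypothesis, so I would invoke Gronwall's lemma to conclude $y(t) \le k_1 \exp(2 k_2 t)$ on $I$, and since $x(t) \le y(t)$ the claimed bound follows. The only subtle step is the transition from the inequality for $x(t)$ to the same inequality for $y(t)$; this is where one must use that the right-hand side is monotone in $t$ and that the zero initial datum together with $k_1 \ge 0$ ensures the bound also absorbs the part $y(t) = 0$ for small $t$. Everything else is routine.
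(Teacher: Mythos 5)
Your argument is correct. Every step checks out: the running maximum $y(t) = \max_{s\in[-\tau,t]} x(s)$ is continuous, non-negative (because $x\equiv 0$ on $[-\tau,0]$) and non-decreasing; it dominates both $x(s)$ and $\max x_s$ for $s\in I$ because $[s-\tau,s]\subseteq[-\tau,s]$; the resulting bound $x(t)\le k_1+2k_2\int_0^t y(s)\,\ds$ has a right-hand side that is non-decreasing in $t$, so it passes to $\max_{[0,t]}x$ and, using $k_1,k_2\ge 0$ and $y\ge 0$, to $y(t)$ itself; and classical integral-form Gronwall then gives $y(t)\le k_1\exp(2k_2t)$, hence the claim with exactly the stated constant.

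The route differs from the paper in that the paper offers no argument at all here --- it simply cites \cite[Theorem~2.1.1]{BainovHristova2011}, a general integral inequality for equations with ``maxima.'' Your proof is a short, self-contained reduction of the delayed/maximum inequality to the ordinary Gronwall lemma, exploiting the zero history on $[-\tau,0]$ to absorb the backward-looking maximum into a forward running maximum. What the citation buys is generality (the reference treats nonzero initial data and more general kernels); what your argument buys is transparency, in particular it makes visible that the factor $2$ in the exponent comes precisely from bounding both $x(s)$ and $\max x_s$ by the same quantity $y(s)$. Either is acceptable; yours is arguably preferable in a self-contained exposition.
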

\begin{proof}
We refer to \cite[Theorem~2.1.1]{BainovHristova2011}.
\end{proof}

\subsection{Standing assumptions}
\label{subsec:standing_asm}
We fix the standing assumptions for the treatment of the optimal control problem
\eqref{eq001}--\eqref{eq:controlconstraint}.
These assumptions shall hold throughout the paper.
\begin{assumption}[Standing assumptions]\leavevmode
 \label{asm:standing}
 \begin{enumerate}
  \item
   The function $F : \R^n \times \R^n \times \R^m \to \R^n$
   is affine in its third argument, i.e.,
   \begin{equation}
    \label{eq:F_affine}
    F(x, v, u)
    =
    F_0(x, v)
    +
    F_1(x, v) \, u
   \end{equation}
   for
   globally Lipschitz continuous and
   continuously differentiable functions $F_0 : \R^n \times \R^n \to \R^n$, $F_1 : \R^n \times \R^n \to \R^{n \times m}$.
  \item
   The initial datum $\phi$ belongs to $C([-\tau,0];\R^n)$.
  \item
   The admissible set $U\subset \R^m$ is non-empty, convex and compact.
  \item
   The integrand $j : I \times \R^n \times \R^m \to \R$ \deleted{$ \cup \{+\infty\}$} is a \added{non-negative, }normal integrand and convex in its third argument $u$,
   see \cite[Chapter~VIII, Section~2.1]{EkelandTemam1999}.
   In addition, it is differentiable w.r.t.\ the second argument $x$
   \added{and this derivative $j_x$ is continuous w.r.t.\ the second and third arguments}.
 \end{enumerate}
\end{assumption}

Under these assumptions, a function $x\in C(I_\tau;\R^n) \cap W^{1,\infty}(I;\R^n)$ is a solution of \eqref{eq001}
if and only if it satisfies the integral equation
\begin{equation}\label{eq:integraleq}
 x(t)  = \phi(0) + \int_0^t F_0(x(s),\max x_s)  +    F_1(x(s), \max x_s) \, u(s)\, \ds \quad \forall t\in I
\end{equation}
together with the initial condition \eqref{eq002}.

\begin{lemma}\label{lem:Jliminf}
 If $x_k\to x$ in $L^1(I;\R^n)$ and $u_k\rightharpoonup u$ in $L^1(I;\R^m)$
 with $u_k(t) \in U$ f.a.a.\ $t \in I$
 then
 \[
  J(x,u) \le \liminf_{k \to \infty} J(x_k,u_k)
  .
 \]
\end{lemma}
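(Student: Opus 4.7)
The claim is a joint lower semicontinuity statement combining weak lower semicontinuity of convex normal integrands in the control variable with continuity in the state variable. My plan is to first pass to a subsequence, not relabeled, along which $\liminf_k J(x_k, u_k)$ is realized as a limit, and such that $x_k(t)\to x(t)$ for almost every $t\in I$ (possible since $x_k\to x$ in $L^1$). The argument then splits into two steps.

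For the first step I would apply the classical Ioffe-type lower semicontinuity theorem for convex normal integrands in the sense of \cite[Ch.~VIII]{EkelandTemam1999}. Since the map $(t, u) \mapsto j(t, x(t), u)$ is a non-negative normal integrand, convex in $u$, and $u_k \weakly u$ in $L^1(I; \R^m)$, this yields
\[
 \int_I j(t, x(t), u(t)) \, \dt
 \le
 \liminf_{k \to \infty} \int_I j(t, x(t), u_k(t)) \, \dt.
\]

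For the second step I would show that the perturbation due to swapping $x$ for $x_k$ vanishes, i.e.
\[
 \int_I \bigl[j(t, x(t), u_k(t)) - j(t, x_k(t), u_k(t))\bigr] \, \dt \to 0.
\]
By the mean value theorem the integrand equals $j_x(t, \xi_k(t), u_k(t)) \cdot (x(t) - x_k(t))$ for some $\xi_k(t)$ on the segment between $x(t)$ and $x_k(t)$. Since $U$ is compact, $u_k(t) \in U$ a.e., and $j_x$ is continuous in $(x, u)$, the prefactor is bounded on any set where the $x_k$ are uniformly bounded in $x$, and the $L^1$-convergence of $x_k$ to $x$ then forces the difference to zero. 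In the applications of this lemma the states are uniformly bounded in $L^\infty$ thanks to \cref{lem:max_gronwall}, so this bound applies directly; in the general setting one would instead use Egorov's theorem to localize onto a measurable set of almost full measure on which $x_k\to x$ uniformly (and hence $\{x_k,x\}$ is contained in a common compact set), and control the remaining contribution using the non-negativity of $j$ and Fatou's lemma.

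Combining both steps yields $J(x, u) \le \liminf_k J(x_k, u_k)$, and since this was done for an arbitrary $\liminf$-realizing subsequence the original sequence inherits the same inequality. The main obstacle is the second step: the continuity of $j_x$ in $(x,u)$ combined with the compactness of $U$ is precisely the ingredient that allows the strong convergence of the state to compensate for only weak convergence of the control, and without either of these two hypotheses the error term could not be controlled.
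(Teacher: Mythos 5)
Your decomposition (weak lower semicontinuity in $u$ at the fixed limit state, plus a vanishing perturbation when $x$ is replaced by $x_k$) is a genuinely different route from the paper's, which simply invokes the joint lower semicontinuity theorem for convex normal integrands \cite[Chapter~VIII, Theorem~2.1]{EkelandTemam1999} and verifies its growth condition using the boundedness of $U$; that theorem needs no differentiability of $j$ in $x$ at all. Your Step~1 is sound. Step~2, however, contains a genuine gap: \cref{asm:standing} only gives continuity of $j_x(t,\cdot,\cdot)$ for (almost) every \emph{fixed} $t$, so the mean value theorem yields a prefactor bounded by $M(t) := \sup\{\,|j_x(t,\xi,v)| : |\xi|\le R,\ v\in U\,\}$, which is finite for a.e.\ $t$ but need not be bounded, nor even integrable, in $t$. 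The estimate
\[
 \Big|\int_I j(t,x(t),u_k(t)) - j(t,x_k(t),u_k(t))\,\dt\Big|
 \le \int_I M(t)\,|x(t)-x_k(t)|\,\dt
\]
therefore cannot be closed by $L^1$- or even $L^\infty$-convergence of $x_k$. Concretely, $j(t,x,u) = x^2/t^2$ for $t>0$ (and $j(0,\cdot,\cdot):=0$) satisfies every item of the standing assumption: it is a non-negative normal integrand, constant (hence convex) in $u$, differentiable in $x$ with $j_x = 2x/t^2$ continuous in $(x,u)$ for a.e.\ $t$. Taking $x\equiv 0$ and $x_k = k^{-1}\chi_{[k^{-2},\,2k^{-2}]}$ gives $x_k\to x$ in $L^1(I)$ and even in $L^\infty(I)$, yet $\int_I j(t,x_k(t),u_k(t))\,\dt = \tfrac12$ for every $k$ while $\int_I j(t,x(t),u_k(t))\,\dt = 0$; the difference you claim tends to zero is identically $-\tfrac12$. (The lemma itself of course still holds here, by Fatou.)

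The Egorov fallback you sketch does not repair this, because Egorov controls the set of times on which $x_k\to x$ uniformly, not the $t$-dependence of $j_x$; on the Egorov set the bound $M(t)$ is still only pointwise finite. What would actually close the argument is a Scorza--Dragoni-type step: choose a closed set $E_\eps\subset I$ with $|I\setminus E_\eps|<\eps$ on which $j$ (or $j_x$) is \emph{jointly} continuous, hence uniformly bounded on $E_\eps\times\{|\xi|\le R\}\times U$, run your estimate on $E_\eps$, and dispose of $I\setminus E_\eps$ using $j\ge 0$ and monotone convergence as $\eps\to 0$. This is essentially how the cited theorem of Ekeland--Temam is proved; as written, your assertion that ``the prefactor is bounded'' does not follow from the hypotheses.
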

\begin{proof}
 This follows from \cite[Chapter~VIII, Theorem~2.1]{EkelandTemam1999}.
 \added{
 Note that the growth condition
 \cite[Chapter~VIII, (2.2)]{EkelandTemam1999}
 can be fulfilled due to the boundedness of $U$:
 Let $M>0$ such that $|v|\le M$ for all  $v\in U$. Then the growth condition is
 satisfied with
 \[
 \Phi(t) = I_{[0,M]}(t) = \begin{cases} 0 & \text{ if } t\in [0,M],\\ +\infty & \text{ if } t>M,\end{cases}
 \]
 as this function is coercive, convex, and increasing on $[0,+\infty)$.
 }
\end{proof}

\begin{remark}
 Under standard modifications, the results of the paper are true for non-autonomous $F$, i.e., where $F$ is given by
 $ F(t,x, v, u)
    =
    F_0(t,x, v)
    +
    F_1(t,x, v) \, u$.
\end{remark}

\section{Existence of solutions}
\label{sec:existence}

In this section, we study the properties of the following differential equation
\begin{equation}
 \label{eq:ode}
 x'(t) = F\parens[\Big]{x(t), \max_{s \in [t-\tau,t]} x(s), u(t) },
 \qquad t \in I
\end{equation}
and the associated control problem.

\subsection{Study of a nonlinear differential equation}

Let us first study an equation more general than \eqref{eq:ode}.
We will investigate the solvability of
\begin{equation}
 \label{eq:ode_general}
 x'(t) = f(t,x(t), x_t, u(t) ),
 \qquad t \in I.
\end{equation}
subject to the initial conditions as above.
A function $x\in C(I_\tau;\R^n) \cap W^{1,\infty}(I;\R^n)$ is called a solution \deleted{of}
if \eqref{eq:ode_general} holds for almost all $t$ and the initial condition $x(t) =\phi(t)$ for all $t\in[-\tau,0]$ is satisfied.

In order to prove solvability of \eqref{eq:ode_general} for all feasible controls $u$,
we require the following assumption.

\begin{assumption}\label{ass_f}
\begin{enumerate}
 \item The function $f:I\times \R^n\times C_\tau \times \R^m \to \R^n$ is measurable in the first, and continuous with respect to the
 other arguments.
 \item For all $M>0$ exists $L_M>0$ such that
 \[
  \abs{f(t,x_1,y_1,u) - f(t,x_2,y_2,u)} \le L_M(\abs{x_1-x_2}+ \norm{y_1-y_2}_{C_\tau})
 \]
 for all $x_1,x_2\in\R^n$, $y_1,y_2\in C_\tau$, $u\in \R^m$ with $\abs{u} \le M$, and almost all $t\in I$.
 \item For all $M>0$ exists $K_M>0$ such that
 \[
  \abs{f(t,0,0,u)} \le K_M
 \]
 \added{for all} $u\in \R^m$ with $\abs{u}\le M$ and almost all $t\in I$.
\item The initial data satisfies $\phi \in C([-\tau,0];\R^n)$.
\end{enumerate}
\end{assumption}

Since $x_t \mapsto \max_s{x_t(s)}$ is Lipschitz continuous, the original problem is
covered by these assumptions. In addition, smooth regularizations of the max-functions
are included as well.

\begin{theorem}
\label{thm:existence_general}
Let \cref{ass_f} be satisfied. Let $u\in L^\infty(I;\R^m)$ be given.
Then there exists a unique solution $x\in W^{1,\infty}(I;\R^n)$ of \eqref{eq:ode_general}
\added{
with initial condition \eqref{eq002}.
}

In addition, the mapping $u \mapsto x$ maps bounded sets in $L^\infty(I;\R^m)$ to bounded sets in $W^{1,\infty}(I;\R^n)$.
\added{
That is, for all $M>0$, there exists $C_M>0$ such that
\[
 \|x\|_{W^{1,\infty}(I;\R^n)} \le C_M
\]
for all solutions $x$ of \eqref{eq:ode_general} to $u\in L^\infty(I;\R^m)$ with $\|u\|_{L^\infty(I;\R^m)}\le M$.
Here, $C_M$ depends on  $L_M,K_M$ but not on $f$.
}
\end{theorem}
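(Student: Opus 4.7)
The plan is to recast \eqref{eq:ode_general} as the integral equation
\[
 x(t) = \phi(0) + \int_0^t f(s, x(s), x_s, u(s)) \, \ds, \qquad t \in I,
\]
together with $x(t) = \phi(t)$ on $[-\tau,0]$, and then invoke the Banach fixed point theorem for the corresponding Picard operator $T$. Concretely, I would let $T$ act on $C(I;\R^n)$ by first extending each candidate $v$ to $I_\tau$ via $\phi$ on $[-\tau,0]$ and then setting $(Tv)(t) := \phi(0) + \int_0^t f(s, v(s), \tilde v_s, u(s)) \, \ds$, where $\tilde v_s$ is the history of the extension. The key point is that, for $\|u\|_{L^\infty} \le M$, \cref{ass_f}(ii)--(iii) furnish a global Lipschitz constant $L_M$ in $(x,y)$ and a uniform bound $K_M$ on $f(\cdot,0,0,u)$.

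For existence and uniqueness, I would combine the Lipschitz estimate with the elementary bound $\|\tilde v_{1,s} - \tilde v_{2,s}\|_{C_\tau} \le \|v_1 - v_2\|_{C([0,s];\R^n)}$, which holds because both extensions agree with $\phi$ on $[-\tau,0]$. This yields
\[
 |Tv_1(t) - Tv_2(t)| \le 2 L_M \int_0^t \|v_1 - v_2\|_{C([0,s];\R^n)} \, \ds,
\]
and iterating shows that $T^n$ is a strict contraction on $C(I;\R^n)$ for $n$ sufficiently large. The Banach fixed point theorem then yields a unique fixed point $x$, and since \cref{ass_f}(ii)--(iii) render $t \mapsto f(t, x(t), x_t, u(t))$ essentially bounded, one automatically has $x \in W^{1,\infty}(I;\R^n)$.

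For the bounded-to-bounded statement, I would apply the same Lipschitz and growth estimates to the integral equation to derive
\[
 |x(t)| \le |\phi(0)| + K_M T + L_M T \|\phi\|_{C([-\tau,0])} + 2 L_M \int_0^t \max_{\rho \in [0,s]} |x(\rho)| \, \ds,
\]
after absorbing the initial-data part of $\|x_s\|_{C_\tau}$ into the additive constant. A standard Gronwall inequality applied to $g(t) := \max_{\rho \in [0,t]} |x(\rho)|$ (alternatively, \cref{lem:max_gronwall} after reducing to a zero-initial-datum function) then produces an $L^\infty$-bound on $x$ depending only on $L_M$, $K_M$, $T$ and $\|\phi\|_{C([-\tau,0])}$; substituting back into \eqref{eq:ode_general} controls $\|x'\|_{L^\infty}$ as well. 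The main technical subtlety is the delay term $x_s$ on $[0,\tau]$, which couples the unknown $x$ to the prescribed initial datum $\phi$, so that care is needed to split $\|x_s\|_{C_\tau}$ into a constant $\phi$-part and a dynamical $x$-part before any Gronwall-type argument applies cleanly.
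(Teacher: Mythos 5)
Your proposal is correct and follows essentially the same route as the paper: the paper also sets up the integral equation and runs a Picard--Lindel\"of argument based on exactly your key estimate $2L_M\int_0^t\|v_1-v_2\|_{C([0,s];\R^n)}\,\ds$, merely writing out the successive approximations and the factorial bound explicitly instead of quoting the iterated-contraction form of Banach's fixed point theorem. The only minor difference is in the a priori bound, where the paper sums the series of iterate differences while you apply Gronwall directly to $t\mapsto\max_{\rho\in[0,t]}|x(\rho)|$; both yield the same $L^\infty$ control and hence the same $W^{1,\infty}$ bound.
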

\begin{proof}
We follow the proof of \cite[Thm.\ 2.1.1]{Wu1996}, which  uses a standard Picard-Lindel\"of argument.
Let us define the functions $x_k\in C(I_\tau;\R^n)$, $k=0,1,\dots$, by
\[
 x_0(t) = \phi( \min(t,0) ),
 \qquad t \in I_\tau
\]
and for $k\ge 0$
\[
 x_{k+1}(t) =
 \begin{cases} \phi(t), & \text{if } t\in [-\tau,0],\\
           \phi(0) + \int_0^t f(s,x_k(s),x_{k,s},u(s))\ds,& \text{if } t \in (0,T].
          \end{cases}
\]
In the following, $t$ is taken from $I$.
Let $M:=\norm{u}_{L^\infty(I;\R^m)}$.
By Lipschitz continuity of $f$, we obtain
\[
 \begin{aligned}
\abs{x_{k+1}(t)-x_k(t)} & \le L_M \int_0^t \abs{x_k(s)-x_{k-1}(s)} + \|x_{k,s}-x_{k-1,s}\|_{C_\tau} \ds\\
&\le 2L_M \int_0^t \|x_k-x_{k-1}\|_{C([0,s];\R^n)}\ds,
\end{aligned}
\]
which implies the estimate
\begin{equation}\label{eq107}
\|x_{k+1}-x_k\|_{C([0,t];\R^n)} = \max_{s\in [0,t]} \abs{x_{k+1}(\added{s})-x_k(s)}
\le 2L_M \int_0^t \|x_k-x_{k-1}\|_{C([0,s];\R^n)}\ds
\added{.}
\end{equation}
Due to the assumptions on $f$, we obtain
\[\begin{split}
 \abs*{\int_0^t f(s,x_0(s),x_{0,s},u(s))\ds} &\le \int_0^t L_M \, ( \abs{\phi(0)} + \norm{x_{0,s}}_{C_\tau}) + K_M \, \ds\\
  &\le (2 \, L_M \, \|\phi\|_{C([-\tau,0];\R^n)} + K_M) \, t.
\end{split}\]
By definition of $x_0$ and $x_1$, this gives the estimate
\[
 \abs{x_1(t)-x_0(t)} \le (2 \, L_M \, \|\phi\|_{C([-\tau,0];\R^n)} + K_M) \, t,
\]
which implies
\begin{equation}\label{eq108}
 \|x_1-x_0\|_{C([0,t];\R^n)} \le  (2 \, L_M \, \|\phi\|_{C([-\tau,0];\R^n)} + K_M) \, t =: K\,t.
\end{equation}
By an induction argument based on \eqref{eq107} and \eqref{eq108}, we obtain the estimate
\[
  \|x_k-x_{k-1}\|_{C([0,t];\R^n)} \le (2L_M)^{k-1} K \frac{t^k}{k!},
\]
which implies
\begin{equation}\label{eq109}
 \norm{x_k-x_{k-1}}_{C(I;\R^n)} \le (2L_M)^{k-1} K \ \frac{T^k}{k!}.
\end{equation}
Since $\sum_{k=1}^\infty (2L_M)^{k-1} K T^k \ \frac1{k!}< \infty$, the sequence $(x_k)$ is a Cauchy sequence
in $C(I;\R^n)$, hence convergent to some $x\in C(I;\R^n)$.
It remains to show that $x$ solves \eqref{eq:ode_general}.
Let us estimate
\begin{multline*}
 \abs*{x(t) - \phi(0) - \int_0^t f(s,x(s),x_s,u(s))\ds}\\
 \begin{aligned}
 &\le \abs{x(t)- x_{k+1}(t)} + \int_0^t \abs{f(s,x_k(s),x_{k,s},u(s))-f(s,x(s),x_s,u(s))}\ds\\
 & \le \|x-x_{k+1}\|_{C(I;\R^n)}+2 L_M T \|x-x_k\|_{C(I;\R^n)}.
 \end{aligned}
\end{multline*}
Passing to the limit $k\to\infty$,
we find that $x$ solves the integral equation
\[
 x(t) = \phi(0) + \int_0^t f(s,x(s),x_{s},u(s))\ds,
\]
which implies that the weak derivative of $x$ satisfies
\[
 x'(t) = f(t,x(t),x_t,u(t))
\]
for almost all $t$. This right-hand side is in $L^\infty(I;\R^n)$, hence $x\in W^{1,\infty}(I;\R^n)$ is proven.

Let $x_1,x_2 \in W^{1,\infty}(I;\R^n)$ be two solutions. Then it holds
\[
 x_1(t)-x_2(t) = \int_0^t f(s,x_1(s),x_{1,s},u(s)) - f(s,x_2(s),x_{2,s},u(s)) \, \ds.
\]
Arguing similarly as in the derivation of \eqref{eq107} above, we find
\[
\|x_1-x_2\|_{C([0,t];\R^n)}
\le 2L_M \int_0^t \|x_1-x_2\|_{C([0,s];\R^n)}\ds,
\]
which implies $x_1=x_2$ by Gronwall's lemma.

Let us prove the claimed boundedness result. By the construction of $x_0$ and the definition of $K$, we get $\norm{x_0}_{C(I;\R^n)}\le K$.
Summing inequality \eqref{eq109}, gives
\[
 \norm{x}_{C(I;\R^n)} \le K + \sum_{k=1}^\infty (2L_M)^{k-1} K T^k \ \frac1{k!}
 \le \max(1, (2L_M)^{-1}) K e^{2L_MT}.
\]
In addition, we have
\[
 \norm{x'}_{L^\infty(I;\R^n)} =\norm{f(\cdot,x,x_t,u)}_{L^\infty(I;\R^n)}\le 2\,L_M \norm{x}_{C(I;\R^n)} + K_M.
\]

Let now $\tilde U \subset L^\infty(I;\R^m)$ be a bounded set with $\|u\|_{L^\infty(I;\R^m)}\le M$ for all $u\in \tilde U$.
Then the estimate above is uniform with respect to controls $u\in \tilde U$, which proves the claim.
\end{proof}

Let us mention that the proof implies that the mapping $\phi\mapsto x$ is Lipschitz continuous.

\begin{corollary}
 \label{cor:existence_state_equation}
Let \cref{asm:standing} be satisfied. Let $u\in L^\infty(I;\R^m)$ be given.
Then there exists a unique solution $x\in W^{1,\infty}(I;\R^n)$ of \eqref{eq001}--\eqref{eq002}.

In addition, the mapping $u \mapsto x$ maps bounded sets in $L^\infty(I;\R^m)$ to bounded sets in $W^{1,\infty}(I;\R^n)$.
\end{corollary}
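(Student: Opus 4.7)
The plan is to reduce the corollary to \cref{thm:existence_general} by recasting the right-hand side of \eqref{eq001} in the form $f(t,x,y,u)$ and verifying \cref{ass_f}. Define
\[
 f(t,x,y,u) := F(x,\max y, u) = F_0(x,\max y) + F_1(x,\max y)\,u
 \qquad \text{for } (t,x,y,u) \in I \times \R^n \times C_\tau \times \R^m.
\]
Then the integral identity \eqref{eq:integraleq} shows that a solution of \eqref{eq001}--\eqref{eq002} in the sense of \cref{cor:existence_state_equation} is precisely a solution of \eqref{eq:ode_general} with this $f$ and the same initial datum $\phi$. It therefore suffices to check that $f$ meets all four items of \cref{ass_f}.

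First I would observe that $f$ does not depend on $t$, so measurability in $t$ and continuity in the remaining variables follow from the continuity of $F_0,F_1$ together with the fact that $y \mapsto \max y$ is continuous (indeed $1$-Lipschitz componentwise) from $C_\tau$ to $\R^n$; this gives item~(i). For item~(ii), fix $M>0$ and $|u|\le M$ and estimate
\[
 \abs{f(t,x_1,y_1,u) - f(t,x_2,y_2,u)}
 \le \bigl(\mathrm{Lip}(F_0) + M\,\mathrm{Lip}(F_1)\bigr)\bigl(\abs{x_1-x_2} + \abs{\max y_1 - \max y_2}\bigr),
\]
and then use $\abs{\max y_1 - \max y_2} \le \norm{y_1-y_2}_{C_\tau}$ to set $L_M := \mathrm{Lip}(F_0) + M\,\mathrm{Lip}(F_1)$. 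For item~(iii), since $F_0$ and $F_1$ are globally Lipschitz and hence of linear growth, we have $\abs{f(t,0,0,u)} \le \abs{F_0(0,0)} + \abs{F_1(0,0)}\,M =: K_M$. Item~(iv) is part of \cref{asm:standing}.

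With \cref{ass_f} verified, \cref{thm:existence_general} directly yields the unique solution $x \in W^{1,\infty}(I;\R^n)$ of \eqref{eq001}--\eqref{eq002} and the stated boundedness of the map $u \mapsto x$ from bounded sets of $L^\infty(I;\R^m)$ into bounded sets of $W^{1,\infty}(I;\R^n)$. The only mildly delicate point is handling the control-affine structure: since $F_1(x,v)\,u$ is not uniformly Lipschitz in $(x,v)$, one must keep track of the factor $M$ when bounding $|u|$, which is exactly why the Lipschitz constants $L_M$ and growth bounds $K_M$ in \cref{ass_f} are allowed to depend on $M$. No other obstacle arises.
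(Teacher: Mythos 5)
Your proposal is correct and follows exactly the route the paper takes: the paper's proof of \cref{cor:existence_state_equation} is the one-line observation that \cref{asm:standing} implies \cref{ass_f} (with the Lipschitz continuity of $x_t \mapsto \max x_t$ noted just before \cref{thm:existence_general}), so the result follows from that theorem. You have merely written out the verification of the four items of \cref{ass_f} in detail, and those verifications, including the $M$-dependence of $L_M$ and $K_M$ coming from the control-affine term, are all accurate.
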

\begin{proof}
 The result follows directly from \cref{thm:existence_general}, as \cref{asm:standing} implies \cref{ass_f}.
\end{proof}

\subsection{Existence of solutions of the optimal control problem}

\begin{theorem}
\label{thm:existence_optimal_control}
 The optimal control problem \eqref{eq001}--\eqref{eq:controlconstraint} admits \added{at least} one solution.
\end{theorem}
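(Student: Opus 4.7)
The plan is to use the direct method of the calculus of variations. First, note that the admissible set is nonempty: pick any constant control $u \equiv u_0 \in U$; by \cref{cor:existence_state_equation}, there is a corresponding state $x \in W^{1,\infty}(I; \R^n)$, and since $j \ge 0$, the objective is finite. Hence $\alpha := \inf J \in [0, \infty)$ is attained as a limit along some minimizing sequence $(x_k, u_k)$ of admissible pairs. Because $U$ is compact, there is $M > 0$ with $|u_k(t)| \le M$ a.e., so $(u_k)$ is bounded in $L^\infty(I; \R^m)$ and \cref{cor:existence_state_equation} gives $(x_k)$ bounded in $W^{1,\infty}(I; \R^n)$, in particular equi-Lipschitz on $I$. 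Combined with the fixed initial datum $\phi$ on $[-\tau, 0]$, the sequence $(x_k)$ is bounded and equicontinuous on $I_\tau$.

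By Arzelà–Ascoli, a subsequence (not relabeled) satisfies $x_k \to x^\star$ in $C(I_\tau; \R^n)$. The Banach–Alaoglu theorem yields a further subsequence with $u_k \weaklystar u^\star$ in $L^\infty(I; \R^m)$, which in particular implies $u_k \weakly u^\star$ in $L^1(I; \R^m)$. The pointwise constraint $u^\star(t) \in U$ a.e.\ follows from Mazur's lemma applied to the weak $L^1$-limit together with convexity and closedness of $U$ (or, equivalently, from the fact that the set $\{v \in L^\infty : v(t) \in U \text{ a.e.}\}$ is convex and strongly closed, hence weakly$^\star$ closed in $L^\infty$).

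The main task is to show that $(x^\star, u^\star)$ solves the state equation \eqref{eq:integraleq}. Since $x_k \to x^\star$ uniformly on $I_\tau$, the maps $t \mapsto \max x_{k,t}$ converge uniformly to $t \mapsto \max x^\star_t$, using Lipschitz continuity of $y \mapsto \max y$ on $C_\tau$. Continuity of $F_0$ and $F_1$ then gives
\begin{equation*}
 F_0(x_k(\cdot), \max x_{k,\cdot}) \to F_0(x^\star(\cdot), \max x^\star_\cdot),
 \quad
 F_1(x_k(\cdot), \max x_{k,\cdot}) \to F_1(x^\star(\cdot), \max x^\star_\cdot)
\end{equation*}
uniformly on $I$. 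For the semilinear term, the product of a strongly convergent sequence in $L^\infty$ with a weakly convergent sequence in $L^1$ converges weakly in $L^1$, so for each $t \in I$,
\begin{equation*}
 \int_0^t F_1(x_k(s), \max x_{k,s}) \, u_k(s) \, \ds
 \to
 \int_0^t F_1(x^\star(s), \max x^\star_s) \, u^\star(s) \, \ds.
\end{equation*}
This is the one delicate step; the uniform convergence of the coefficient is what makes the passage to the limit safe. Combined with the uniform convergence of the $F_0$-term and of $x_k(t)$, we obtain \eqref{eq:integraleq} for $x^\star$ with control $u^\star$, and the initial condition is preserved by uniform convergence on $[-\tau, 0]$.

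Finally, $x_k \to x^\star$ in $C(I; \R^n)$ implies convergence in $L^1(I; \R^n)$, so \cref{lem:Jliminf} gives
\begin{equation*}
 J(x^\star, u^\star) \le \liminf_{k \to \infty} J(x_k, u_k) = \alpha.
\end{equation*}
Since $(x^\star, u^\star)$ is admissible, $J(x^\star, u^\star) = \alpha$ and $(x^\star, u^\star)$ is an optimal pair.
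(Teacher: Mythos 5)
Your proposal is correct and follows essentially the same route as the paper's proof: the direct method with a minimizing sequence, the a priori bound from \cref{cor:existence_state_equation} giving compactness of the states in $C(I_\tau;\R^n)$, weak(-$\star$) convergence of the controls with feasibility of the limit from convexity and closedness of the admissible set, passage to the limit in the integral equation \eqref{eq:integraleq} via the affine structure of $F$ (strong convergence of the coefficients times weak convergence of $u_k$), and lower semicontinuity of $J$ from \cref{lem:Jliminf}. You merely spell out the steps the paper leaves terse (Arzelà--Ascoli, the strong-times-weak product argument), and the only cosmetic difference is working with weak-$\star$ convergence in $L^\infty$ rather than weak convergence in $L^2$.
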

\begin{proof}
Due to the assumptions and \cref{cor:existence_state_equation}, the feasible set is non-empty.
Let $(x_k,u_k)$ be a minimizing sequence. By assumptions on $U$, the sequence $(u_k)$ is bounded in $L^\infty(I;\R^n)$, and by \cref{thm:existence_general}
the sequence $(x_k)$ is bounded in $W^{1,\infty}(I;\R^n)$.
Hence, after possibly extracting subsequences, we have $u_k\rightharpoonup u$ in $L^2(I;\R^n)$ and $x_k\to x$ in $C(I_\tau;\R^n)$.
Since the set of admissible controls is convex and closed in $L^2(I;\R^n)$, the feasibility of $u$ follows.
Due to the special structure of $F$, we can pass to the limit in the integral equation \eqref{eq:integraleq}.
Hence, $x$ solves the integral equation, and consequently is a solution of \eqref{eq001}.
Due to \cref{lem:Jliminf}, we obtain that $(x,u)$ is a solution of the considered optimal control problem.
\end{proof}

\subsection{Non-existence of optimal controls for related problems}
\label{subsec:nonexistence}

\added{
In \cite{AzhmyakovAhmedVerriest16}, optimal control problems were studied that do not satisfy the \cref{asm:standing}, in particular
the structural assumption on $F$ is violated. They considered state equations of the type
\begin{equation}\label{eq:uinmax}
 x'(t) = Ax(t) + b k \max_{s \in [t-\tau,t]}\left( c(s)^\trp x(s)\right), \quad t \in I,
\end{equation}
where $A\in \R^{n\times n}$, $b\in \R^n$ are given, and $k\in \R$ and $c(t)\in \R^n$ are control variables.
The control variable $c(t)$ appears in the argument of the max-function.
Hence, the right-hand side of this equation depends in a highly nonlinear way on the control $c$.
}

\added{
In this section, we are going to demonstrate that an optimal control problem subject to an equation of the type \eqref{eq:uinmax}
may fail to possess optimal solutions. To this end,
}
we consider the problem with one-dimensional state and control
\begin{equation}
 \label{eq:non_existence}
 \begin{aligned}
  \text{Minimize} &\quad \int_0^1 \abs{x(t) - 2\,t}^2 + u(t) \, \dt
  + 4 \, \abs{x(1) - 2} \\
  \text{s.t.} &\quad x'(t) = \frac1{10} \, \max_{s \in [t-2,t]}( u(t) \, x(s) ), \qquad t \in (0,1)\\
  & \quad x(t) = \phi(t), \qquad t \in [-2,0] \\
  & -1 \le u \le 3.
 \end{aligned}
\end{equation}
Moreover,
$\phi : [-2,0] \to \R$ is a function
such that the ranges of $\phi_{|[-2,-1]}$, $\phi_{|[-1,0]}$ are $[-10,10]$
and $\phi(0) = 0$.

Before \deleted{we} discussing the existence of controls,
we are going to analyze the state equation.
By splitting the $\max$ at $s = 0$,
we find
\begin{equation*}
 \abs{x'(t)} \le 3 + \frac3{10} \max_{s \in [0,t]} \abs{x(s)}.
\end{equation*}
Integration yields
\begin{equation*}
 \abs{x(t)} \le 3 \, t + \frac3{10} \int_0^t \max_{s \in [0,r]} \abs{x(s)} \, \d r.
\end{equation*}
Since this estimate is valid for all $t$
and since the right-hand side is monotone w.r.t. $t$,
this implies
\begin{equation*}
 \max_{s \in [0,1]} \abs{x(s)}
 \le
 3 + \frac3{10} \int_0^1 \max_{s \in [0,r]} \abs{x(s)} \, \d r
 \le
 3 + \frac3{10} \max_{s \in [0,1]} \abs{x(s)}
 .
\end{equation*}
Thus,
$\abs{x(t)} \le 30/7 \le 10$.
This estimate allows us to evaluate the $\max$ in the state equation
via
\begin{equation*}
 \max_{s \in [t-2,t]} (u(t) \, x(s) )
 =
 \max_{s \in [-1,0]} (u(t) \, x(s) )
 =
 10 \, \abs{u(t)}.
\end{equation*}
Hence, we can simplify the state equation and obtain
\begin{equation*}
 x'(t) = \abs{u(t)}.
\end{equation*}
Now, we can prove the main result of this section.
\begin{theorem}
 \label{thm:nonexistence}
 The control problem \eqref{eq:non_existence}
 does not possess a solution.
\end{theorem}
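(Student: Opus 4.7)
The plan is to exploit the simplification $x'(t)=\abs{u(t)}$ (with $x(0)=\phi(0)=0$) established in the paragraph preceding the theorem, so that every admissible state has the form $x(t)=\int_0^t\abs{u(s)}\,\ds$. Introducing the two scalars
\[
 A := \int_0^1 u(t)\,\dt, \qquad B := \int_0^1\abs{u(t)}\,\dt = x(1),
\]
the objective rewrites as
\[
 J(x,u)=\int_0^1\abs{x(t)-2t}^2\,\dt + A + 4\,\abs{B-2}.
\]
I will show that $\inf J = 1$ and that this infimum is not attained.

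For the lower bound, the crux is the algebraic inequality $A + 4\,\abs{B-2} \ge 1$ for every measurable $u$ with $-1\le u\le 3$. Writing $u=u^+-u^-$ and $E:=\{t : u(t)<0\}$, the bounds $u^-\le 1$ and $u^+\le 3$ yield $\int u^- \le \abs{E}$ and $B\le\abs{E}+3(1-\abs{E})$, hence $\abs{E}\le\min(B,(3-B)/2)$. Since $A = B - 2\int u^-$, this gives
\[
 A \ge -B \text{ for } B\in[0,1], \qquad A \ge 2B-3 \text{ for } B\in[1,3].
\]
A short case distinction on the sign of $B-2$ then shows $A+4\,\abs{B-2}\ge 1$, with equality only when $B=2$ and $A=1$. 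Combined with the non-negativity of $\int_0^1\abs{x-2t}^2\,\dt$, this establishes $J\ge 1$.

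For the matching upper bound, I construct a minimizing sequence by setting $u_k=-1$ on $[j/k,(2j+1)/(2k)]$ and $u_k=3$ on $[(2j+1)/(2k),(j+1)/k]$ for $j=0,\dots,k-1$. A direct integration gives $x_k(j/k)=2j/k$ at every nodal point, hence $\abs{x_k(t)-2t}\le 1/(2k)$ uniformly, while $\int_0^1 u_k\,\dt=1$ and $x_k(1)=2$ exactly. Consequently $J(x_k,u_k)\le 1/(4k^2)+1\to 1$.

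Finally, to rule out attainment, suppose $J(x,u)=1$ for some admissible pair. Equality in the lower bound forces $B=2$, $A=1$, and $\int_0^1\abs{x(t)-2t}^2\,\dt=0$; continuity of $x$ then gives $x(t)=2t$ on $[0,1]$, so $x'(t)=2=\abs{u(t)}$ a.e. The constraint $u(t)\in[-1,3]$ then forces $u\equiv 2$ and hence $A=2\ne 1$, a contradiction. The principal obstacle is the sharpness of the algebraic inequality in the second paragraph: a naive estimate gives only $A\ge B-2$, which is not tight at $B=2$ and would lead merely to $J\ge 0$; one must exploit both the pointwise bound $u\ge -1$ and the budget $u^+\le 3$ simultaneously in order to obtain the value $1$.
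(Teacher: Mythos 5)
Your proof is correct and follows essentially the same route as the paper: the same reduction to $x'(t)=\abs{u(t)}$, the same two scalar quantities (your $A,B$ are the paper's $a+b$ and $b-a$), the same inequality extracted from the control bounds $-1\le u\le 3$ via the measures of $\{u<0\}$ and $\{u>0\}$, an oscillating minimizing sequence, and the identical final contradiction $\abs{u}=2\Rightarrow u\equiv 2\Rightarrow\int_0^1 u\,\dt=2\ne 1$. One small slip: the intermediate claim $\abs{E}\le B$ is false in general (take $u\equiv -0.1$); what you actually need, and what does hold, is $\int u^-\,\dt\le\min\bigl(\abs{E},B\bigr)\le\min\bigl((3-B)/2,\,B\bigr)$, so your displayed lower bounds on $A$ and the rest of the argument remain valid.
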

\begin{proof}
 \textit{Step 1:}
 We show that the infimal value is at most $1$.
 Let us define
 $u_k(t) = 1 + 2 \, \operatorname{sign}(\sin(k \, t))$
 and let $x_k$ be the associated state.
 Then, it is straightforward to check that
 $u_k \weakly \hat u \equiv 1$ in $L^2(0,1)$
 and
 $x_k \to \hat x$
 \added{in $C(I_\tau; \R)$}
 with $\hat x(t) = 2 \, t$.
 This implies that the objective value of $(x_k, u_k)$
 goes to $0 + 1 + 0 = 1$.
 Note that $\hat x$ is not the state corresponding to $\hat u$.

 \textit{Step 2:}
 We show that the objective value of all feasible $(x,t)$
 is bigger than $1$.
 We proceed by contradiction and
 assume that we have a feasible pair
 $(x,u)$ with objective value at most $1$.

 Let us denote
 \begin{equation*}
  a := \int_{\{u < 0\}} u \, \dt,
  \qquad
  b := \int_{\{u > 0\}} u \, \dt.
 \end{equation*}
By considering the control bounds and the length of the time interval, we obtain
 \[
 1 =\int_{\{u < 0\}} 1 \, \dt + \int_{\{u > 0\}} 1 \, \dt
 \ge \int_{\{u < 0\}} (-u) \, \dt + \int_{\{u > 0\}} \frac13u\, \dt
 =  -a + \frac b3 .
  \]
 This inequality is equivalent to
 \begin{equation}
  \label{eq:bound_ab}
  -1 \ge 2(b-a-2) -(a+b).
\end{equation}
 Since the objective is at most $1$,
 we have
 \begin{equation}
  \label{eq:bound_ab2}
  1 \ge
  \int_0^1 u \, \dx + 4 \, \abs{ x(1) - 2}
  =
  \int_0^1 u \, \dt + 4 \, \abs*{ \int_0^1\abs{u}\,\dt - 2}
  =
  a + b + 4 \, \abs{ b - a - 2 }.
 \end{equation}
 Adding the inequalities \eqref{eq:bound_ab} and \eqref{eq:bound_ab2} yields $b-a-2 = 0$, which in turn implies $\int_0^1 u \,\dt = a+b=1$.
 By considering again the objective,
 we infer
 $x(t) = 2 \, t$, i.e., $\abs{u(t)} = x'(t) = 2$.
 Hence, $u(t) = 2$ and this is a contradiction.
\end{proof}
\deleted{
As a side result,
this theorem also shows that it is not possible
to relax the assumption that $F$ is affine in $u$, cf.\ \eqref{eq:F_affine}.
}

\section{LogIntExp as a generalization of LogSumExp}
\label{sec:logintexp}

Let $x_i$, $i = 1,\ldots, n$ be given real numbers.
It is well known that the maximum
$\max(x_1, \ldots, x_n)$
depends in a non-smooth way on the parameters $x_i$.
This has severe drawbacks in many applications.
Therefore, a typical substitute for this hard maximum
is the so-called LogSumExp function.
For a given parameter $k > 0$,
this function is defined via
\begin{equation}
	\label{eq:lse}
	\lse_k(x_1, \ldots, x_n)
	:=
	\frac1k \, \log\parens[\bigg]{ \sum_{i = 1}^n \exp(k \, x_i) }
	.
\end{equation}
In the next lemma, we summarize some of the well-known properties of LogSumExp,
see, e.g., \cite[Example~1.30]{RockafellarWets1998},
\cite[p.~325]{Rockafellar1970}.
\begin{lemma}
	\label{lem:lse}
	Let $k > 0$ be a given parameter.
	Then, the function $\lse_k$
	is convex,
	smooth,
  and
	Lipschitz continuous with rank $1$.
	The estimate
	\begin{equation*}
		\max(x_1,\ldots,x_n)
		\le
		\lse_k(x_1,\ldots, x_n)
		\le
		\max(x_1,\ldots,x_n)
		+
		\frac{\log(n)}{k}
	\end{equation*}
	shows that $\lse_k(x_1,\ldots, x_n) \to \max(x_1, \ldots, x_n)$ as $k \to \infty$.
	Concerning the derivatives, we have
	\begin{equation*}
		\frac{\d}{\d x_i} \lse_k(x_1, \ldots, x_n)
		=
		\frac{\exp(k \, x_i)}{\sum_{j = 1}^n \exp(k \, x_j)}
		.
	\end{equation*}
	In particular,
	\begin{equation*}
		\frac{\d}{\d x_i} \lse_k(x_1, \ldots, x_n)
		\ge 0
		\qquad\text{and}\qquad
		\sum_{i = 1}^n \frac{\d}{\d x_i} \lse_k(x_1, \ldots, x_n) = 1.
	\end{equation*}
\end{lemma}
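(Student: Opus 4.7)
The plan is to verify each assertion by direct computation from the definition \eqref{eq:lse}; all of the arguments are short, and I would not need to invoke the references except as a sanity check.

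First, smoothness is immediate: $\sum_{j=1}^n\exp(k x_j)$ is a smooth and strictly positive function on $\R^n$ and $\log$ is smooth on $(0,\infty)$. Differentiating via the chain rule yields the stated derivative formula for $\partial \lse_k/\partial x_i$. Non-negativity of the partial derivatives is obvious, and summing the softmax fractions over $i$ gives $1$.

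Next, for the sandwich bound I would keep only the largest summand in $\sum_j\exp(k x_j)$ to deduce $\sum_j\exp(k x_j)\ge \exp(k\max_j x_j)$, hence $\lse_k\ge\max$. Conversely, each of the $n$ summands is bounded above by $\exp(k\max_j x_j)$, giving $\sum_j\exp(k x_j)\le n\exp(k\max_j x_j)$ and therefore $\lse_k\le\max+\log(n)/k$. Passing $k\to\infty$ in this sandwich yields the claimed convergence.

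For the Lipschitz property with rank $1$, I would combine the derivative formula with the mean-value theorem along the segment joining $x$ and $y$: since the partial derivatives are nonnegative and sum to $1$, the gradient has $\ell^1$-norm equal to $1$ everywhere, whence
\[
 |\lse_k(x)-\lse_k(y)|=\Bigl|\int_0^1 \nabla\lse_k\bigl(y+\theta(x-y)\bigr)\cdot(x-y)\,\d\theta\Bigr|\le\|x-y\|_\infty.
\]
Finally, for convexity I would differentiate once more and exhibit the Hessian as $k$ times the covariance matrix of the discrete probability measure with weights $p_i:=\exp(k x_i)/\sum_j \exp(k x_j)$, which is positive semidefinite by the standard identity $\operatorname{Var}_p(v)\ge 0$ for arbitrary vectors $v\in\R^n$. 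None of these steps poses a genuine obstacle; the only point requiring a little care is the choice of norm underlying the Lipschitz constant, which I interpret as the $\ell^\infty$-norm on $\R^n$, in accordance with the max-norm used for $C_\tau$ later in the paper.
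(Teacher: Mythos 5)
Your proof is correct in every step: the derivative formula, the sandwich bound, the $\ell^1$-normalized gradient combined with H\"older's inequality for the Lipschitz estimate in the $\ell^\infty$-norm, and the Hessian computation $k\,(\diag{p}-p\,p^\trp)$ with $p_i=\exp(k\,x_i)/\sum_j\exp(k\,x_j)$, whose positive semidefiniteness is exactly the variance identity you invoke. The paper itself offers no proof at all for this lemma --- it merely points to standard references (Rockafellar--Wets, Example~1.30, and Rockafellar's \emph{Convex Analysis}), so your self-contained verification is a genuine, if elementary, alternative to citation. Your one judgement call, reading ``Lipschitz with rank $1$'' relative to the $\ell^\infty$-norm, is the right one: it matches the max-norm convention the paper uses for $C_\tau$ and is the finite-dimensional analogue of the later result that $\lie_k$ is $1$-Lipschitz on $L^\infty(\Omega)$. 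The only thing the citation buys over your argument is brevity; conversely, your direct computation makes visible why the constant is exactly $1$ and prepares the reader for the integral version of the same estimates proved later in the paper.
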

Due to these nice properties, the LogSumExp function
has
many applications, e.g., in machine learning \cite{Goodfellow2016}.

A natural extension of this idea to smooth the
essential supremum of measurable functions is to replace summation with integration.
To this end, we consider a measure space
$(\Omega, \Sigma, \mu)$ with $\mu(\Omega) < \infty$.
For convenience, integrals w.r.t.\ $\mu$ are indicated by
$\int_\Omega \ldots \dx$.
Let $u : \Omega \to \R$ be a measurable function.
We define the LogIntExp of $u$ with parameter $k > 0$
via
\begin{equation}
	\label{eq:lie}
	\lie_k( u )
	:=
	\frac1k \, \log\parens[\bigg]{\int_\Omega \exp(k \, u(x)) \, \dx}
	.
\end{equation}

In \cite{KruseUlbrich2015}, the LogIntExp-function (up to some normalization factor in the argument of the $\log$)
was used for regularization of state constraints in an optimal control problem, see \cite[Definition~3.5]{KruseUlbrich2015}.
There, this function was analyzed on spaces of (H\"older) continuous functions.

Let us first state some basic properties of
LogIntExp.
\begin{lemma}
	\label{lem:logintexp_1}
	For every measurable $u : \Omega \to \R$ and $k > 0$,
	the quantity $\lie_k(u) \in \R \cup \{+\infty\}$ is well-defined
  and \added{$\lie_k(\cdot)$ is} convex.
	Moreover, we have the estimates
	\begin{equation*}
		\essinf_{x \in \Omega} u(x) + \frac{\log(\mu(\Omega))}{k}
		\le
		\lie_k(u)
		\le
		\esssup_{x \in \Omega} u(x) + \frac{\log(\mu(\Omega))}{k}.
	\end{equation*}
	If for some $\delta \in \R$, the measure of the set $\{x \in \Omega \mid u(x) \ge \delta\}$
	is at least $\varepsilon > 0$,
	then
	\begin{equation*}
		\delta + \frac{\log(\varepsilon)}{k}
		\le
		\lie_k(u)
		.
	\end{equation*}
	In particular,
	\begin{equation*}
		\lie_k(u)
		\to
		\esssup_{x \in \Omega} u(x)
		\in
		\R \cup \{+\infty\}
		\qquad\text{as } k \to \infty.
	\end{equation*}
\end{lemma}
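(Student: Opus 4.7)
My plan is to verify the four assertions of the lemma in the order they are stated, relying only on elementary measure-theoretic facts and Hölder's inequality.

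For the well-definedness, I would note that $\exp(k\,u(\cdot))$ is a measurable function into $(0,\infty)$, hence its integral over $\Omega$ is a well-defined element of $(0,+\infty]$. Applying $\log$ yields a value in $\R \cup \{+\infty\}$, and dividing by $k > 0$ preserves this. For convexity, given measurable $u,v$ and $\lambda \in (0,1)$, I would write
\begin{equation*}
\int_\Omega \exp\bigl(k\,(\lambda u + (1-\lambda) v)\bigr)\,\dx
=
\int_\Omega \exp(k\,u)^\lambda \, \exp(k\,v)^{1-\lambda} \,\dx
\end{equation*}
and then invoke Hölder's inequality with conjugate exponents $1/\lambda$ and $1/(1-\lambda)$ to bound this by $\bigl(\int \exp(k u)\,\dx\bigr)^\lambda \bigl(\int \exp(k v)\,\dx\bigr)^{1-\lambda}$. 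Taking $\log$ and dividing by $k$ produces exactly the convexity inequality $\lie_k(\lambda u + (1-\lambda) v) \le \lambda \lie_k(u) + (1-\lambda) \lie_k(v)$, with the usual convention that both sides may be $+\infty$.

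For the two-sided bound, I would integrate the pointwise estimates $\exp(k\,\essinf u) \le \exp(k\,u(x)) \le \exp(k\,\esssup u)$ (valid a.e.) over $\Omega$, obtaining $\mu(\Omega)\,\exp(k\,\essinf u) \le \int_\Omega \exp(k\,u)\,\dx \le \mu(\Omega)\,\exp(k\,\esssup u)$, and then apply the monotone map $(1/k)\log(\cdot)$. The intermediate bound is obtained in the same way, but restricting the integration to the set $A_\delta := \{x \in \Omega \mid u(x) \ge \delta\}$: since $\exp(k\,u) \ge 0$ everywhere and $\ge \exp(k\,\delta)$ on $A_\delta$, we have $\int_\Omega \exp(k\,u)\,\dx \ge \mu(A_\delta)\,\exp(k\,\delta) \ge \varepsilon\,\exp(k\,\delta)$, and the claimed inequality follows after taking $\log/k$.

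Finally, for the convergence, I would distinguish two cases. If $S := \esssup u$ is finite, the upper bound yields $\limsup_{k\to\infty} \lie_k(u) \le S$, while for any $\eta > 0$ the set $\{u \ge S - \eta\}$ has positive measure $\varepsilon_\eta$, so the intermediate bound gives $\liminf_{k\to\infty} \lie_k(u) \ge S - \eta$; letting $\eta \downarrow 0$ finishes the case. If $S = +\infty$, then for every $N \in \N$ the set $\{u \ge N\}$ has positive measure $\varepsilon_N > 0$, and the intermediate bound yields $\liminf_{k\to\infty} \lie_k(u) \ge N$, so $\lie_k(u) \to +\infty$. The only potentially subtle point is the $+\infty$ case of the convergence assertion, but the intermediate bound handles it cleanly; no step requires more than the definition of the essential supremum and basic monotonicity of $\exp$ and $\log$.
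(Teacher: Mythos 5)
Your proposal is correct and follows essentially the same route as the paper: well-definedness from measurability and positivity of $\exp(k\,u)$, convexity via H\"older's inequality with exponents $1/\lambda$ and $1/(1-\lambda)$, and the bounds by integrating the pointwise estimates (respectively restricting to the superlevel set). The only difference is that you spell out the final convergence step, including the case $\esssup u = +\infty$, which the paper leaves implicit under ``in particular''.
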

\begin{proof}
	First, we comment on the well definedness.
	The function $x \mapsto \exp(k \, u(x))$ is measurable and positive,
	therefore the Lebesgue integral
	\begin{equation*}
		\int_\Omega \exp(k \, u(x)) \, \dx \in \R \cup \{+\infty\}
	\end{equation*}
	is well defined. Taking the logarithm
	(with the convention $\log(+\infty) = +\infty$) yields the claim.

	The convexity of $\lie_k$ is a simple application of Hölder's inequality.
	Indeed, for measurable functions $u, v$
	satisfying $\lie_k(u),\lie_k(v) < \infty$
	and $\lambda \in (0,1)$,
	we have
	\begin{align*}
		\lie_k\parens[\big]{\lambda \, u + (1-\lambda) \, v}
		&=
		\frac1k \, \log\parens[\bigg]{\int_\Omega \exp\parens[\big]{\lambda \, k \, u + (1-\lambda) \, k \, v} \, \dx}
		\\
		&=
		\frac1k \, \log\parens[\bigg]{\int_\Omega \exp\parens[\big]{\lambda \, k \, u} \, \exp\parens[\big]{(1-\lambda) \, k \, v} \, \dx}
		\\
		&\le
		\frac1k \, \log\parens[\bigg]{
			\bracks[\Big]{\int_\Omega \exp\parens[\big]{\lambda \, k \, u}^{1/\lambda}}^{\lambda}
			\,
			\bracks[\Big]{\int_\Omega \exp\parens[\big]{(1-\lambda) \, k \, v}^{1/(1-\lambda)}}^{1-\lambda}
		}
		\\
		&=
		\frac\lambda k \, \log\parens[\bigg]{\int_\Omega \exp\parens[\big]{k \, u} \, \dx}
		+
		\frac{1-\lambda}k \, \log\parens[\bigg]{\int_\Omega \exp\parens[\big]{k \, v} \, \dx}
		\\
		&=
		\lambda \, \lie_k(u) + (1-\lambda) \, \lie_k(v).
	\end{align*}
	The estimates for $\lie_k(u)$ follow from
	\begin{equation*}
		\mu(\Omega) \, \exp(k \, \essinf_{x \in \Omega} u(x))
		\le
		\int_\Omega \exp(k \, u(x)) \, \dx
		\le
		\mu(\Omega) \, \exp(k \, \esssup_{x \in \Omega} u(x))
	\end{equation*}
	and
	\begin{equation*}
		\varepsilon \, \exp(k \, \delta)
		\le
		\int_\Omega \exp(k \, u(x)) \, \dx
		.
	\end{equation*}
\end{proof}

A related error estimate for LogIntExp applied to H\"older continuous functions can be found in \cite[Lemma~3.20]{KruseUlbrich2015}.
On the space $L^\infty(\Omega)$,
we have the following properties.
\begin{lemma}
	\label{lem:lie_on_linfty}
	The function $\lie_k : L^\infty(\Omega) \to \R$
	is continuously differentiable with
	\begin{equation*}
		\lie_k'(u) \, v
		=
		\frac{\int_\Omega \exp(k \, u(x)) \, v(x) \, \dx}
		{\int_\Omega \exp(k \, u(\hat x)) \, \d\hat x}
    \qquad\added{\forall v \in L^\infty(\Omega)}.
	\end{equation*}
	In particular, $\lie_k$ is Lipschitz continuous with rank $1$.
\end{lemma}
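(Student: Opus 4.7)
The plan is to write $\lie_k = (1/k)\,\log \circ \Phi$ where $\Phi\colon L^\infty(\Omega) \to (0,\infty)$ is the functional $\Phi(u) := \int_\Omega \exp(k\, u(x))\,\dx$. Since $\mu(\Omega) < \infty$ and $u$ is essentially bounded, the estimate
\begin{equation*}
 \mu(\Omega)\,\exp(-k\,\|u\|_{L^\infty})
 \le \Phi(u) \le
 \mu(\Omega)\,\exp(k\,\|u\|_{L^\infty})
\end{equation*}
keeps $\Phi(u)$ bounded away from $0$ on bounded sets, so composing with $\log$ causes no trouble and it will suffice to prove $\Phi \in C^1(L^\infty(\Omega);\R)$ with $\Phi'(u)\,h = \int_\Omega k\,\exp(k\,u)\,h\,\dx$.

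The main technical step is the Fréchet differentiability of the Nemytskii operator $N(u) := \exp(k\,u)$ viewed as a map from $L^\infty(\Omega)$ to $L^\infty(\Omega)$, with derivative $h \mapsto k\exp(k\,u)\,h$. For $u, h \in L^\infty(\Omega)$ with $\|u\|_{L^\infty} \le M$ and $\|h\|_{L^\infty} \le 1$, the pointwise second-order Taylor estimate $|\exp(t) - 1 - t| \le t^2\,\exp(|t|)/2$ applied at $t = k\,h(x)$ yields
\begin{equation*}
 \|N(u+h) - N(u) - k\exp(k\,u)\,h\|_{L^\infty}
 \le \exp(k M)\,\frac{k^2\,\exp(k)}{2}\,\|h\|_{L^\infty}^2,
\end{equation*}
which is $\oo(\|h\|_{L^\infty})$. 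A simpler variant of the same pointwise estimate shows that $u \mapsto k\exp(k\,u)$ is continuous on bounded sets of $L^\infty(\Omega)$, so $u \mapsto N'(u)$ is continuous in operator norm. Composing with the bounded linear functional $g \mapsto \int_\Omega g\,\dx$ (of norm $\mu(\Omega)$) and then with $\log \in C^\infty((0,\infty))$, the chain rule delivers the asserted formula
\begin{equation*}
 \lie_k'(u)\,v
 = \frac{1}{k\,\Phi(u)}\,\Phi'(u)\,v
 = \frac{\int_\Omega \exp(k\,u(x))\,v(x)\,\dx}{\int_\Omega \exp(k\,u(\hat x))\,\d\hat x}
\end{equation*}
together with continuity of $u \mapsto \lie_k'(u)$.

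For the Lipschitz bound with rank $1$, I would note that for every $w, v \in L^\infty(\Omega)$ the pointwise bound $|v(x)| \le \|v\|_{L^\infty}$ gives
\begin{equation*}
 |\lie_k'(w)\,v|
 \le \frac{\int_\Omega \exp(k\,w)\,|v|\,\dx}{\int_\Omega \exp(k\,w)\,\dx}
 \le \|v\|_{L^\infty},
\end{equation*}
so $\|\lie_k'(w)\|_{L^\infty(\Omega)\dualspace} \le 1$. The mean value inequality applied along the segment from $u_1$ to $u_2$ then yields $|\lie_k(u_1) - \lie_k(u_2)| \le \|u_1 - u_2\|_{L^\infty}$. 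I expect the Fréchet differentiability of the Nemytskii operator on $L^\infty$ to be the only genuinely non-routine step; once that is secured, everything else is the chain rule and the elementary uniform bound on $\lie_k'$.
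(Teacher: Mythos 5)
Your proposal is correct and follows essentially the same route as the paper: factor $\lie_k$ through the Nemytskii operator $u \mapsto \exp(k\,u)$, the integration functional, and $\log$, apply the chain rule, and then deduce the Lipschitz bound from the fact that the derivative is a normalized (norm-one) positive functional combined with the mean value inequality. The only cosmetic difference is that you establish differentiability of the Nemytskii operator into $L^\infty(\Omega)$ by an explicit second-order Taylor estimate, whereas the paper cites standard results for the map into $L^1(\Omega)$; both work since $\mu(\Omega)<\infty$.
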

\begin{proof}
	Standard results on Nemytskii operators imply that
  $u \mapsto \exp(k \, u)$ is {continuously} Fréchet differentiable from
	$L^\infty(\Omega)$ to $L^1(\Omega)$
	with derivative
	$k \, \exp(k \, u)$.
	Hence, the chain rule implies the announced formula for the derivative of $\lie_k$.
	Finally, the function
	$x \mapsto \exp(k \, u(x)) / \int_\Omega \exp(k \, u(\hat x)) \, \d\hat x$
	belongs to $L^1(\Omega)$ and has norm $1$.
	Thus, the weak mean value theorem, see \cite[Proposition~3.3.1]{Cartan1967},
	implies the Lipschitz continuity of $\lie_k$.
\end{proof}
This Lipschitz continuity implies
that
\begin{align*}
	\abs[\big]{ \lie_k(u_k) - \esssup u }
	&\le
	\abs[\big]{ \lie_k(u_k) - \lie_k(u) }
	+
	\abs[\big]{ \lie_k(u) - \esssup u }
	\\&
	\le
	\norm{u_k - u}_{L^\infty(\Omega)}
	+
	\abs[\big]{ \lie_k(u) - \esssup u }
	\to
	0
\end{align*}
for $u_k \to u$ in $L^\infty(\Omega)$.

Finally, we provide an estimate specialized to our problem \eqref{eq:ode}.
From now on, the measure space $\Omega$ is just the interval
$[0,\tau]$ (with the one-dimensional Lebesgue measure).
Recall that $x_t(s) = x(t - s)$ for $s \in [0,\tau]$.
In particular,
\begin{equation*}
 \lie_k(x_t)
 =
 \frac1k \, \log\parens[\Big]{\int_0^\tau \exp\parens[\big]{k \, x_t(s)} \, \ds}
 =
 \frac1k \, \log\parens[\Big]{\int_{t-\tau}^t \exp\parens[\big]{k \, x(s)} \, \ds}
 .
\end{equation*}

\begin{lemma}
 \label{lem:l1norm}
 Let $x \in L^\infty(-\tau, T)$ be a given function.
 Then,
 \begin{equation*}
  \int_0^T \abs[\big]{\esssup x_t - \lie_k(x_t)}^{\added{p}} \, \dt
  \to
  0
 \end{equation*}
 \added{for all $p \in [1,\infty)$.}
\end{lemma}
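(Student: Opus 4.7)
The plan is to combine a pointwise (in $t$) convergence with Lebesgue's dominated convergence theorem applied on the bounded interval $[0,T]$. \cref{lem:logintexp_1} already does the heavy lifting at a single $t$; the task is to supply measurability in $t$ and a uniform bound.

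First I would establish pointwise convergence. For each fixed $t \in [0,T]$, I would apply \cref{lem:logintexp_1} to $u = x_t$ on $\Omega = [0,\tau]$. The upper estimate directly yields $\lie_k(x_t) \le \esssup x_t + \log(\tau)/k$. For the reverse direction I would use the second part of \cref{lem:logintexp_1} with $\delta = \esssup x_t - \varepsilon_1$; by the very definition of the essential supremum, the set $\{s \in [0,\tau] : x_t(s) \ge \delta\}$ has positive Lebesgue measure $\varepsilon_2 > 0$, so $\lie_k(x_t) \ge \esssup x_t - \varepsilon_1 + \log(\varepsilon_2)/k$. Sending $k \to \infty$ and then $\varepsilon_1 \to 0$ gives $\lie_k(x_t) \to \esssup x_t$ for every $t \in [0,T]$.

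Next I would verify measurability of $t \mapsto \esssup x_t$. The key observation is that $t \mapsto \lie_k(x_t)$ is continuous: the function $s \mapsto \exp(k\,x(s))$ lies in $L^1(-\tau,T)$ (even in $L^\infty$), so $t \mapsto \int_{t-\tau}^t \exp(k\,x(s))\,\ds$ is absolutely continuous, and it is bounded below by the strictly positive constant $\tau\,\exp\!\parens{-k\|x\|_{L^\infty(-\tau,T)}}$; composing with $(1/k)\,\log(\cdot)$ preserves continuity. Being the pointwise limit of these continuous functions, $t \mapsto \esssup x_t$ is Borel-measurable.

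Finally, a uniform bound: $|\esssup x_t| \le \|x\|_{L^\infty(-\tau,T)}$ and, by \cref{lem:logintexp_1}, $|\lie_k(x_t)| \le \|x\|_{L^\infty(-\tau,T)} + |\log(\tau)|/k$, so for $k \ge 1$ the integrand is dominated by the constant $(2\|x\|_{L^\infty(-\tau,T)} + |\log(\tau)|)^p$, which is integrable on $[0,T]$. Dominated convergence then delivers the claim for every $p \in [1,\infty)$. I expect the only mild technicality to be the measurability of the envelope $t \mapsto \esssup x_t$, which is why I insert the continuity argument for the regularized surrogate; all the analytic content is already contained in \cref{lem:logintexp_1}.
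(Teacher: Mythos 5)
Your proof is correct and follows essentially the same route as the paper: pointwise convergence of $\lie_k(x_t) \to \esssup x_t$ via \cref{lem:logintexp_1}, a constant dominating bound of the form $2\,\norm{x}_{L^\infty(-\tau,T)} + \abs{\log(\tau)}$ for $k \ge 1$, and the dominated convergence theorem. The only addition is your explicit verification that $t \mapsto \esssup x_t$ is measurable (as a pointwise limit of the continuous functions $t \mapsto \lie_k(x_t)$), a point the paper leaves implicit.
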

\begin{proof}
 The convergence result from \cref{lem:logintexp_1}
 implies the pointwise convergence
 \begin{equation*}
  \abs[\big]{\esssup x_t - \lie_k(x_t)} \to 0
 \end{equation*}
 for all $t \in I$.
 Moreover, we have the \added{constant} bound
 \begin{equation*}
  \abs[\big]{\esssup x_t - \lie_k(x_t)}
  \le
  2\,
  \norm{x}_{L^\infty(-\tau,T)}
  +
  \frac{\abs{\log(\tau)}}{k}
  \le
  2\,
  \norm{x}_{L^\infty(-\tau,T)}
  +
  \abs{\log(\tau)}
  ,
 \end{equation*}
 cf.\ \cref{lem:logintexp_1}.
 Thus, the dominated convergence theorem yields the claim.
\end{proof}

\section{Regularization}
\label{sec:reg}

\subsection{Regularized state equation}
As a regularization of \eqref{eq:ode},
we use
\begin{equation}
 \label{eq:ode_reg}
 x'(t) = F(x(t), \lie_k(x_t), u(t) ),
 \qquad t \in I.
\end{equation}
Note that $\lie_k(x_t)$ is understood component-wise.
The existence of
a unique solution follows
directly from
\cref{thm:existence_general},
due to the Lipschitz continuity of $\lie_k$.

The convergence of the solutions of \eqref{eq:ode_reg}
towards the solution of \eqref{eq:ode}
is made precise in the next result.
Let us emphasize that the proof heavily relies on the affine structure of $F$.

\begin{lemma}
 \label{lem:regularization_compact_state}
 Let the sequence $(u_k)$ be bounded in $L^\infty(I;\R^m)$.
 Let $x_k$ be the solution of
 \begin{equation*}
  x_k'(t) = F(x_k(t), \lie_k(x_{k,t}), u_k(t) ),
  \qquad t \in I,
 \end{equation*}
 with initial data $x_k(t) = \phi(t)$ for $t \in [-\tau,0]$.
 If $u_k \weakly u$ in $L^1(I;\R^m)$
 then $x_k \to x$ in $C(I;\R^n)$,
 \added{
 where $x$ is the solution of \eqref{eq001}--\eqref{eq002} to $u$.
 }
\end{lemma}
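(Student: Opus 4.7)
The plan is to use compactness to extract a subsequence of $(x_k)$ converging in $C(I_\tau;\R^n)$, identify its limit with the unique solution $x$ of \eqref{eq001}--\eqref{eq002}, and then conclude by a standard subsequence principle.

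First, I would note that, by \cref{lem:lie_on_linfty}, $\lie_k$ has Lipschitz rank $1$ independent of $k$. Hence the right-hand side $(t,y,z,v) \mapsto F(y, \lie_k(z), v)$ of the regularized equation satisfies \cref{ass_f} with constants $L_M, K_M$ uniform in $k$. The $W^{1,\infty}$-bound from \cref{thm:existence_general} then yields $\sup_k \norm{x_k}_{W^{1,\infty}(I;\R^n)} < \infty$. Since additionally $x_k \equiv \phi$ on $[-\tau,0]$, the sequence $(x_k)$ is uniformly bounded and equicontinuous on $I_\tau$, so Arzelà--Ascoli produces a (not relabeled) subsequence with $x_k \to \hat x$ in $C(I_\tau;\R^n)$, where $\hat x = \phi$ on $[-\tau,0]$.

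Next, to identify $\hat x$, I would pass to the limit in the integral form of the regularized equation. The crucial input is the triangle inequality
\[
 \abs{\lie_k(x_{k,s}) - \max \hat x_s}
 \le
 \norm{x_k - \hat x}_{C(I_\tau;\R^n)}
 + \abs{\lie_k(\hat x_s) - \max \hat x_s},
\]
in which the first term vanishes by the unit Lipschitz rank of $\lie_k$ and the uniform convergence $x_k \to \hat x$, while the second tends to $0$ in $L^p(I)$ for every $p \in [1,\infty)$ by \cref{lem:l1norm}. Combined with the global Lipschitz continuity of $F_0, F_1$ from \cref{asm:standing} and the uniform convergence of $x_k$, this yields strong $L^p(I)$-convergence of $F_i(x_k(\cdot), \lie_k(x_{k,\cdot}))$ to $F_i(\hat x(\cdot), \max \hat x_{\cdot})$ for $i \in \{0,1\}$. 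For the $F_0$-integral strong $L^1$-convergence is enough; for the bilinear $F_1 u_k$-term I would use the standard strong-weak pairing, exploiting strong $L^1$-convergence of $F_1(x_k, \lie_k(x_{k,\cdot}))$, the bound $\sup_k\norm{u_k}_{L^\infty} < \infty$, the weak convergence $u_k \weakly u$ in $L^1$, and the essential boundedness of the limit $F_1(\hat x,\max \hat x_\cdot)$. Passing to the limit shows that $\hat x$ satisfies \eqref{eq:integraleq} with control $u$, hence solves \eqref{eq001}--\eqref{eq002}. By uniqueness (\cref{cor:existence_state_equation}), $\hat x = x$, and since every subsequence of $(x_k)$ admits a further subsequence converging to $x$, the whole sequence converges in $C(I_\tau;\R^n)$ and a fortiori in $C(I;\R^n)$.

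The main obstacle is the simultaneous change of both the argument $x_{k,s}$ and the parameter $k$ inside $\lie_k$; this is resolved by the triangle inequality above that decouples the two effects, exploiting the $k$-independent Lipschitz rank of $\lie_k$. The passage to the limit in the product $F_1(x_k, \lie_k(x_{k,\cdot})) u_k$ under only weak convergence of $u_k$ is then routine, enabled by the affine structure of $F$ in the control.
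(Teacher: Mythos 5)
Your proposal is correct, and it shares all the essential ingredients with the paper's proof: the uniform ($k$-independent) Lipschitz rank of $\lie_k$ to get boundedness of $(x_k)$ in $W^{1,\infty}(I;\R^n)$ via \cref{thm:existence_general}, compactness to extract a uniformly convergent subsequence, the triangle inequality that inserts $\lie_k$ evaluated at a fixed function to decouple the change of argument from the change of parameter (handled by \cref{lem:l1norm}), the weak--strong pairing enabled by the affine structure of $F$ in $u$, and the subsequence--subsequence principle. The one genuine difference lies in the identification of the limit. The paper never passes to the limit in the integral equation for $x_k$; instead it subtracts the equation for the \emph{known} solution $x$ from the regularized equation, derives the integral inequality
\begin{equation*}
 \abs{x(t)-\tilde x(t)} \le L\,(M+1)\int_0^t \abs{x(s)-\tilde x(s)} + \max\abs{x_s-\tilde x_s}\,\ds
\end{equation*}
for the limit $\tilde x$ of the subsequence, and concludes $x=\tilde x$ from the retarded Gronwall inequality of \cref{lem:max_gronwall}. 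You instead pass to the limit in the integral equation \eqref{eq:integraleq} satisfied by $x_k$, show that the limit $\hat x$ is itself a solution to the control $u$, and invoke the uniqueness statement of \cref{cor:existence_state_equation}. Both mechanisms are sound and of comparable length; the paper's route keeps everything inside a single quantitative estimate (and reuses \cref{lem:max_gronwall}, which is needed elsewhere anyway), while yours is conceptually cleaner in that it outsources the identification entirely to the already-proven well-posedness and only requires pointwise-in-$t$ convergence of the integrals, with the strong $L^1$-convergence of $F_1(x_k(\cdot),\lie_k(x_{k,\cdot}))$ doing the work in the bilinear term.
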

\begin{proof}
Let $M>0$ such that $\|u_k\|_{L^\infty(I;\R^m)}\le M$, which implies $\|u\|_{L^\infty(I;\R^m)}\le M$.
Since $\lie_k$ is Lipschitz continuous uniformly with respect to $k$, the solutions $(x_k)$ are bounded in $W^{1,\infty}(I;\R^n)$
by \cref{thm:existence_general}.
Let $L>0$ denote the maximum of the Lipschitz moduli of $F_0$ and $F_1$.

 First, we investigate the difference of the equations
 \begin{align*}
  x'(t) - x_k'(t)
  &=
  F_0( x(t), \max x_t) - F_0( x_k(t), \lie_k(x_{k,t}))
  \\&\qquad
  +
  F_1( x(t), \max x_t) u(t)
  -
  F_1( x_k(t), \lie_k(x_{k,t}))u_k(t)
  \\
  &=
  F_0( x(t), \max x_t) - F_0( x_k(t), \lie_k(x_{k,t}))
  \\&\qquad
  +
  F_1( x(t), \max x_t)(u(t) - u_k(t))
  \\&\qquad
  +
  \parens[\big]{
   F_1( x(t), \max x_t)
   -
   F_1( x_k(t), \lie_k(x_{k,t}))
  }
   u_k(t)
  .
 \end{align*}
 By Lipschitz continuity, we have
 \begin{multline*}
  \abs{ F_0( x(t), \max x_t) - F_0( x_k(t), \lie_k(x_{k,t})) }\\
 \begin{aligned}
  &  \le L \left( \abs{ x(t) - x_k(t)} + \abs{ \max x_t -  \lie_k(x_{k,t}) } \right)   \\
  &  \le L \left( \abs{ x(t) - x_k(t)} + \abs{ \max x_t - \lie_k(x_t)} + \abs{\lie_k(x_t) - \lie_k(x_{k,t}) } \right)   \\
  &  \le L \left( \abs{ x(t) - x_k(t)} + \abs{ \max x_t - \lie_k(x_t)} + \max \abs{x_t - x_{k,t} } \right) .
\end{aligned}
\end{multline*}
Similarly, we can estimate
 \begin{multline*}
  \abs{
  \parens[\big]{
   F_1( x(t), \max x_t)
   -
   F_1( x_k(t), \lie_k(x_{k,t}))
  }
  u_k(t)
  }\\
  \le  LM \left( \abs{ x(t) - x_k(t)} + \abs{ \max x_t - \lie_k(x_t)} + \max \abs{x_t - x_{k,t} } \right) .
  \end{multline*}
 Integrating over $t$
 and using
 the Lipschitz estimates above, we have
 \begin{align*}
  \abs{x(t) - x_k(t)}
  &\le
  L(M+1) \, \int_0^t \abs{x(s) - x_k(s)} + \max\abs{x_s - x_{k,s}} \, \ds
  \\&\qquad
  +
  \abs*{ \int_0^t  F_1( x(s), \max x_s)(u(s) - u_k(s))   \, \ds }
  \\&\qquad
  +
  L (M+1) \, \int_0^t  \abs{\max x_s - \lie_k(x_s)} \, \ds
 \end{align*}
 for all $t \in I$.
 Since $(x_k)$ is bounded in $W^{1,\infty}(I;\R^n)$,
 there exists a subsequence (without relabeling), such that
 $x_k \to \tilde x$ in $C(I; \R^n)$.
 Passing to the limit in the above inequality
 yields
 \begin{equation*}
  \abs{x(t) - \tilde x(t)}
  \le
  L \, (M+1) \, \int_0^t \abs{x(s) - \tilde x(s)} + \max\abs{x_s - \tilde x_s} \, \ds
 \end{equation*}
 for all $t \in I$.
 The integral inequality from \cref{lem:max_gronwall} implies $x = \tilde x$.
 Thus, a standard subsequence-subsequence argument implies $x_k \to x$ in $C(I;\R^n)$
 for the entire sequence $x_k$.
\end{proof}

\begin{corollary}
 Under the assumptions of \cref{lem:regularization_compact_state}, we have $x_k\rightharpoonup x$ in $W^{1,1}(I;\R^n)$.
\end{corollary}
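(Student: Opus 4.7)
The plan is to leverage the strong $C(I;\R^n)$ convergence from \cref{lem:regularization_compact_state} together with a direct weak convergence of the derivatives. Since $I$ is bounded, $x_k \to x$ in $C(I;\R^n)$ trivially implies $x_k \to x$ in $L^1(I;\R^n)$, so the only remaining task is to show $x_k' \weakly x'$ in $L^1(I;\R^n)$. The boundedness of $(x_k)$ in $W^{1,\infty}(I;\R^n)$ from \cref{thm:existence_general} yields that $(x_k')$ is bounded in $L^\infty(I;\R^n) \hookrightarrow L^1(I;\R^n)$, so the overall strategy is to test against arbitrary $v \in L^\infty(I;\R^n)$ and pass to the limit in the integral pairing.

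As an auxiliary step I would first establish $\lie_k(x_{k,\cdot}) \to \max x_\cdot$ in $L^1(I;\R^n)$ via the triangle inequality
\begin{equation*}
 \abs{\lie_k(x_{k,t}) - \max x_t}
 \le
 \abs{\lie_k(x_{k,t}) - \lie_k(x_t)}
 +
 \abs{\lie_k(x_t) - \max x_t},
\end{equation*}
using the rank-$1$ Lipschitz estimate of \cref{lem:lie_on_linfty} combined with $x_k \to x$ in $C(I_\tau;\R^n)$ for the first summand, and \cref{lem:l1norm} with $p=1$ for the second. Together with the global Lipschitz continuity of $F_0$ and $F_1$ and the uniform convergence $x_k \to x$, this immediately yields $F_i(x_k, \lie_k(x_{k,\cdot})) \to F_i(x, \max x_\cdot)$ in $L^1(I)$ for $i = 0, 1$.

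Finally, I would insert the representation $x_k' = F_0(x_k, \lie_k(x_{k,\cdot})) + F_1(x_k, \lie_k(x_{k,\cdot})) u_k$ into $\int_0^T x_k' \cdot v \, \dt$. The $F_0$-contribution converges by the preceding $L^1$-convergence paired with the bounded test function $v$. For the $F_1$-contribution, the key decomposition is
\begin{equation*}
 F_1(x_k, \lie_k(x_{k,\cdot})) u_k - F_1(x, \max x_\cdot) u
 =
 \bigl(F_1(x_k, \lie_k(x_{k,\cdot})) - F_1(x, \max x_\cdot)\bigr) u_k
 +
 F_1(x, \max x_\cdot)(u_k - u).
\end{equation*}
The first summand, when integrated against $v$, is dominated by $\|u_k\|_{L^\infty}\, \|v\|_{L^\infty}$ times the $L^1$-norm of the coefficient difference and hence vanishes; the second summand vanishes because the product $v^\trp F_1(x, \max x_\cdot)$ belongs to $L^\infty(I;\R^m)$ and pairs with the weakly converging difference $u_k - u \weakly 0$ in $L^1(I;\R^m)$. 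The main technical point (and the only place where any nontrivial structure is used) is avoiding a product of two merely weakly convergent sequences, which is precisely what the affine dependence of $F$ on $u$ from \eqref{eq:F_affine} enables.
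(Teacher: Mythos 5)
Your proposal is correct and follows essentially the same route as the paper, which simply asserts that $t\mapsto F(x_k(t),\lie_k(x_{k,t}),u_k(t))$ converges weakly in $L^1(I;\R^n)$ to $t\mapsto F(x(t),\max x_t,u(t))$ as a consequence of \cref{lem:regularization_compact_state} and \cref{asm:standing}; you merely fill in the details (strong $L^1$ convergence of the coefficients paired with weak convergence of $u_k$, exploiting the affine structure \eqref{eq:F_affine}) that the paper leaves implicit.
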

\begin{proof}
 Using the result of \cref{lem:regularization_compact_state} and \cref{asm:standing}, it is easy to see that the mappings $t\mapsto F(x_k(t), \lie_k(x_{k,t}), u_k(t) )$
 converge weakly in $L^1(I;\R^n)$ to  $t\mapsto F(x(t), \max x_t, u(t) )$.
\end{proof}

\begin{remark}
 If $F$ is supposed to be nonlinear with respect to the control, then the result of the above \cref{lem:regularization_compact_state}
 is no longer true. In order to obtain convergence of the states, one has to assume strong convergence $u_k\to u$.
 However, in \cref{sec52} below, we have to deal with weakly converging sequences of controls $(u_k)$, see, e.g., the proof of \cref{thm:convergence_regularized_optimal_controls}.
 Hence, we are restricted to the affine setting.
\end{remark}

\subsection{Regularized optimal control problem}
\label{sec52}

In the following, let $(x^*,u^*)$ be a local solution of the original problem.
Then there is $\delta>0$ such that
$J(x^*,u^*)\le J(x,u)$ for all feasible controls $u$ with associated states $x$ satisfying $\|u-u^*\|_{L^2(I;\R^m)}\le\delta$.

Let us consider the following regularized optimal control problem: Minimize
\begin{equation}
  \label{eq:objective-regularized}
  J(x,u) + \frac12 \, \norm{u - u^*}_{L^2(I;\R^m)} = \int_0^T j(t, x(t), u(t)) + \frac12 \, \abs{u(t)-u^*(t)}^2 \, \dt
\end{equation}
subject to the non-linear equation \eqref{eq:ode_reg}, the initial condition \eqref{eq002}, the control constraints \eqref{eq:controlconstraint}, and the auxiliary
constraints
\begin{equation}
  \label{eq:constraint-regularized}
 \|u-u^*\|_{L^2(I;\R^m)} \le \delta.
\end{equation}

\begin{theorem}
For every $k$, the regularized optimal control problem
admits a global solution $(x_k,u_k)$.
\end{theorem}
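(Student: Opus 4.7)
The plan is to mimic the proof of \cref{thm:existence_optimal_control} via the direct method of the calculus of variations, with modifications to accommodate the regularization, the additional $L^2$-penalty, and the auxiliary constraint.

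First, I would verify that the feasible set is non-empty. The choice $u = u^*$ satisfies the control constraints \eqref{eq:controlconstraint} (since $u^*$ is admissible for the original problem) and the auxiliary constraint \eqref{eq:constraint-regularized} trivially. The existence of a unique associated state $x_k^\sharp$ solving \eqref{eq:ode_reg} follows from \cref{thm:existence_general}, whose hypotheses hold because $\lie_k$ is Lipschitz continuous from $C_\tau$ to $\R$ with rank $1$ by \cref{lem:lie_on_linfty}. Non-negativity of $j$ from \cref{asm:standing} then ensures that the infimal value is finite.

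Next, I would take a minimizing sequence $(x_\ell, u_\ell)$. The compactness of $U$ implies that $(u_\ell)$ is bounded in $L^\infty(I;\R^m)$, and \cref{thm:existence_general} provides a uniform bound on $(x_\ell)$ in $W^{1,\infty}(I;\R^n)$. After extracting subsequences, I obtain $u_\ell \weakly u_k$ in $L^2(I;\R^m)$, and by Arzel\`a--Ascoli, $x_\ell \to x_k$ uniformly on $I_\tau$. The limit $u_k$ still satisfies the pointwise control constraint by convexity and closedness of the admissible set in $L^2$, and the auxiliary constraint \eqref{eq:constraint-regularized} is preserved by weak lower semicontinuity of the $L^2$-norm.

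The central step is to pass to the limit in the integral form of \eqref{eq:ode_reg}. The affine structure \eqref{eq:F_affine} of $F$ is essential here, exactly as in the proof of \cref{thm:existence_optimal_control}: the term $F_0(x_\ell(s), \lie_k(x_{\ell,s}))$ converges uniformly to $F_0(x_k(s), \lie_k(x_{k,s}))$ since $\lie_k$ is Lipschitz and $F_0$ is continuous, while the product $F_1(x_\ell(s), \lie_k(x_{\ell,s})) \, u_\ell(s)$ converges weakly in $L^1(I;\R^n)$ to $F_1(x_k(s), \lie_k(x_{k,s})) \, u_k(s)$ by the standard strong--weak product argument. Consequently, $x_k$ solves \eqref{eq:ode_reg} for the control $u_k$. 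Combining \cref{lem:Jliminf} with weak lower semicontinuity of $u \mapsto \tfrac12 \|u - u^*\|_{L^2(I;\R^m)}^2$ yields the lower semicontinuity of the regularized objective \eqref{eq:objective-regularized}, so $(x_k, u_k)$ is a global minimizer. I do not expect any serious obstacle; the only delicate step is the passage to the limit in the state equation, and this proceeds verbatim as in \cref{thm:existence_optimal_control}, with $\max$ replaced by the Lipschitz surrogate $\lie_k$.
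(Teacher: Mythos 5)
Your proposal is correct and follows essentially the same route as the paper, whose proof of this theorem is simply the remark that it "can be carried out using similar arguments as in the proof of \cref{thm:existence_optimal_control}"; you have faithfully expanded exactly that argument, correctly identifying the additional points (feasibility via $u = u^*$, weak closedness of the ball constraint \eqref{eq:constraint-regularized}, lower semicontinuity of the $L^2$-penalty, and the role of the Lipschitz continuity of $\lie_k$ in place of $\max$).
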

\begin{proof}
 The proof can be carried out using similar arguments as in the proof of \cref{thm:existence_optimal_control}.
\end{proof}

\begin{theorem}
\label{thm:convergence_regularized_optimal_controls}
Let $(x_k,u_k)$ be a sequence of global solutions of the regularized optimal control problem.
Then we have the convergence $x_k\to x^*$ and $u_k\to u^*$ in $C(I;\R^n)$ and $L^2(I;\R^m)$, respectively.
\end{theorem}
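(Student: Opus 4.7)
The plan is to use the quadratic penalty in the regularized objective, together with weak lower semicontinuity and local optimality of $(x^*, u^*)$, to pin the weak limit of $(u_k)$ at $u^*$, and then to exploit the penalty to upgrade this to strong convergence. To set up the key inequality, let $x^*_k$ denote the solution of the regularized equation \eqref{eq:ode_reg} associated with the control $u^*$. Then the pair $(x^*_k, u^*)$ is admissible for the regularized problem: the regularized state equation holds by construction, $u^*$ satisfies \eqref{eq:controlconstraint}, and the auxiliary constraint \eqref{eq:constraint-regularized} is trivially met. Optimality of $(x_k, u_k)$ thus yields
\[
J(x_k, u_k) + \tfrac{1}{2}\norm{u_k - u^*}_{L^2(I;\R^m)}^2 \le J(x^*_k, u^*).
\]
By \cref{lem:regularization_compact_state} applied to the constant sequence $u^*$, one has $x^*_k \to x^*$ in $C(I;\R^n)$, and combining this with the continuity of $j$ w.r.t.\ its second argument and the uniform $L^\infty$-bounds on $(x^*_k)$ and on $u^*$, a dominated convergence argument gives $J(x^*_k, u^*) \to J(x^*, u^*)$.

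Since $U$ is compact, $(u_k)$ is bounded in $L^\infty(I;\R^m)$, hence in $L^2(I;\R^m)$. Up to a subsequence (not relabeled), $u_k \weakly \tilde{u}$ in $L^2(I;\R^m)$, with $\tilde{u}(t) \in U$ a.e.\ by convexity and closedness of the constraint set. Since weak $L^2$-convergence implies weak $L^1$-convergence on the bounded interval $I$, \cref{lem:regularization_compact_state} yields $x_k \to \tilde{x}$ in $C(I;\R^n)$, where $\tilde{x}$ is the unique solution of \eqref{eq001}--\eqref{eq002} associated to $\tilde{u}$. Weak lower semicontinuity of the $L^2$-norm further gives $\norm{\tilde{u} - u^*}_{L^2(I;\R^m)} \le \delta$, so $\tilde{u}$ lies in the neighborhood where $(x^*, u^*)$ is locally optimal.

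To identify $\tilde{u}$, pass to $\liminf$ on the left of the optimality inequality, using \cref{lem:Jliminf} for $J$ and weak lsc of the norm for the penalty, and to the limit on the right, obtaining
\[
J(\tilde{x}, \tilde{u}) + \tfrac{1}{2}\norm{\tilde{u} - u^*}_{L^2(I;\R^m)}^2 \le J(x^*, u^*).
\]
Local optimality of $(x^*, u^*)$, applicable because $(\tilde{x}, \tilde{u})$ is feasible for the original problem and satisfies $\norm{\tilde u - u^*}_{L^2(I;\R^m)} \le \delta$, gives the reverse inequality $J(x^*, u^*) \le J(\tilde{x}, \tilde{u})$. Combining the two forces $\norm{\tilde{u} - u^*}_{L^2(I;\R^m)} = 0$, so $\tilde{u} = u^*$ and, by uniqueness of solutions, $\tilde{x} = x^*$.

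To upgrade to strong convergence of the controls, rearrange the optimality inequality as
\[
\tfrac{1}{2}\norm{u_k - u^*}_{L^2(I;\R^m)}^2 \le J(x^*_k, u^*) - J(x_k, u_k),
\]
and take $\limsup$: by \cref{lem:Jliminf} with the now-established $u_k \weakly u^*$ and $x_k \to x^*$ in $C(I;\R^n)$, one has $\liminf_{k \to \infty} J(x_k, u_k) \ge J(x^*, u^*)$, and together with $J(x^*_k, u^*) \to J(x^*, u^*)$ this forces $\limsup_{k \to \infty} \norm{u_k - u^*}_{L^2(I;\R^m)}^2 \le 0$. Thus $u_k \to u^*$ strongly in $L^2(I;\R^m)$, and a further application of \cref{lem:regularization_compact_state} upgrades this to $x_k \to x^*$ in $C(I;\R^n)$. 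A standard subsequence--subsequence argument extends the convergences from the extracted subsequence to the full sequence. The main technical subtlety is the continuity assertion $J(x^*_k, u^*) \to J(x^*, u^*)$; everything else is a clean application of the weak-lsc machinery of \cref{lem:Jliminf,lem:regularization_compact_state}.
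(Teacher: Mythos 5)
Your proposal is correct and follows essentially the same route as the paper: comparison with the admissible pair $(x^*_k, u^*)$, weak subsequential limits identified via \cref{lem:regularization_compact_state} and \cref{lem:Jliminf}, local optimality of $(x^*,u^*)$ to force $\tilde u = u^*$, and the quadratic penalty to upgrade weak to strong convergence. You merely make explicit two steps the paper leaves implicit (the use of local optimality to conclude $(\tilde x,\tilde u)=(x^*,u^*)$, and the rearrangement yielding $\limsup_k \norm{u_k-u^*}_{L^2}^2 \le 0$), which is fine.
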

\begin{proof}
 Let $x^*_k$ denote the solution of the regularized differential equation \eqref{eq:ode_reg} to the control $u^*$.
 Then \cref{lem:regularization_compact_state} implies $x^*_k\to x^*$ in $C(I;\R^n)$. The integrand $j$ is continuous with respect to $x$,
 which implies $J(x^*_k,u^*)\to J(x^*,u^*)$.
 Since $(u_k)$ is a bounded sequence in $L^2(I;\R^m)$, we obtain after extracting a subsequence $u_{k_n}\rightharpoonup \bar u$
 in $L^2(I;\R^m)$.
 Then $\bar u$ satisfies the control constraints as well as $\|\bar u-u^*\|_{L^2(I;\R^m)} \le \delta$.
 Using \cref{lem:regularization_compact_state} again, we find $x_{k_n}\to \bar x$ in $C(I;\R^n)$, which solves \eqref{eq001} to $\bar u$.
 In addition, \cref{lem:Jliminf} yields $J(\bar x,\bar u)\le \liminf_{n\to\infty} J(x_{k_n},u_{k_n})$.
By global optimality, we have
\[
J(x_k,u_k) + \frac12 \|u_k-u^*\|_{L^2(I;\R^m)}^2 \le J(x^*_k,u^*).
\]
Passing to the limit along the subsequence yields
\begin{multline*}
J(\bar x,\bar u) + \frac12 \|\bar u-u^*\|_{L^2(I;\R^m)}^2
\le \liminf_{n\to \infty}\left( J(x_{k_n},u_{k_n})+ \frac12 \|u_{k_n}-u^*\|_{L^2(I;\R^m)}^2 \right)\\
\le \limsup_{n\to \infty}\left( J(x_{k_n},u_{k_n})+ \frac12 \|u_{k_n}-u^*\|_{L^2(I;\R^m)}^2 \right)
\le J(x^*,u^*),
\end{multline*}
which implies $(\bar x,\bar u) = (x^*,u^*)$.
Hence, the above chain of inequalities are equalities, which imply the strong convergence $u_{k_n}\to u^*$ in $L^2(I;\R^m)$.
Since the limit is independent of the chosen subsequence, we obtain convergence of the whole sequence.
\end{proof}

Let $(x_k,u_k)$ be locally optimal for the regularized problem. For abbreviation, let us define
\[
 F^k(t):= F(x_k(t),\lie_k(x_{k,t}),u_k(t)).
\]
Similarly, we define  $F^k_x(t)$, $j^k_x$, and $F^k_y(t)$ to be the derivatives of $F$ and $j$ with respect to the first and second argument, respectively.

\begin{theorem}
\label{thm:optimality_condition_regularized}
Let $(x_k,u_k)$ be locally optimal for the regularized problem with $\|u_k-u^*\|_{L^2(I)}<\delta$.
Then there exists $\lambda_k\in W^{1,\infty}(I;\R^n)$ with $\lambda_k(T)=0$  such that
\begin{equation}\label{eq:adjoint_regularized}
  \lambda_k'(s)^\trp +\lambda_k(s)^\trp  F^k_x(s)
+\int_{s}^{\min(s+\tau,T)} \lambda_k(t)^\trp F^k_y(t)
		\diag{
		\frac{\exp(k \, x_k(s))}
		{\int_{t-\tau}^t \exp(k \, x_k(\hat s)) \, \d\hat s}
		}
		\dt
  =j^k_x(s)
\end{equation}
is satisfied for almost all $s\in \added{I}$.
Here, $\diag v$ denotes the diagonal matrix with diagonal entries taken from the vector $v$.
The division and exponentiation has to be understood component-wise.

Moreover, the maximum principle in integrated
form holds
\begin{multline}\label{eq:maximum-principle}
 \int_0^T \lambda_k(t)^\trp F(x_k(t),\lie_k(x_{k,t}),u_k(t)) - j(t,x_k(t),u_k(t)) - (u_k(t)-u^*(t))^\trp u_k(t)\, \dt \ge
 \\
 \int_0^T \lambda_k(t)^\trp F(x_k(t),\lie_k(x_{k,t}),u(t)) - j(t,x_k(t),u(t))- (u_k(t)-u^*(t))^\trp u(t) \dt
\end{multline}
for all $u$ satisfying the control constraint \eqref{eq:controlconstraint}.
\end{theorem}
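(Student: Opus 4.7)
The plan is the standard sensitivity-plus-adjoint calculation, adapted to the retarded, non-smooth setting of the regularized problem. Since the auxiliary constraint is inactive (i.e.\ $\|u_k-u^*\|_{L^2(I;\R^m)}<\delta$), local optimality is preserved under small convex variations $u_\eps:=u_k+\eps(u-u_k)$ for arbitrary admissible $u\in U$ and $\eps\in(0,1]$. Setting $\phi(\eps):=J(x_\eps,u_\eps)+\tfrac12\|u_\eps-u^*\|_{L^2(I;\R^m)}^2$, local optimality gives $\phi(\eps)\ge\phi(0)$ for small $\eps>0$, so $\limsup_{\eps\to 0^+}\eps^{-1}(\phi(\eps)-\phi(0))\ge 0$.

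First, I would establish Fréchet differentiability of the control-to-state map $u\mapsto x$ of \eqref{eq:ode_reg} from $L^\infty(I;\R^m)$ into $C(I;\R^n)\cap W^{1,\infty}(I;\R^n)$. This is a routine consequence of the smoothness of $F_0,F_1$ and of $\lie_k$ (\cref{lem:lie_on_linfty}), together with the estimates in \cref{thm:existence_general}. The derivative $z$ in direction $u-u_k$ solves the linearized retarded equation
\begin{equation*}
z'(t)=F^k_x(t)\,z(t)+F^k_y(t)\,\lie_k'(x_{k,t})\,z_t+F_1^k(t)\,(u(t)-u_k(t)),
\end{equation*}
with $z\equiv 0$ on $[-\tau,0]$, where $\lie_k'(x_{k,t})\,z_t$ is evaluated componentwise using the integral formula from \cref{lem:lie_on_linfty}.

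Second, I would construct $\lambda_k\in W^{1,\infty}(I;\R^n)$ as the unique solution of \eqref{eq:adjoint_regularized} with terminal condition $\lambda_k(T)=0$. Existence follows from a backward Picard iteration applied to this linear equation with bounded memory kernel of width $\tau$, in direct analogy with \cref{thm:existence_general}. I would then test the linearized equation with $\lambda_k$, integrate over $I$, integrate by parts in time using $\lambda_k(T)=0$ and $z(0)=0$, and swap the order of integration in the memory term over $\{(s,t)\in I\times I:\max(t-\tau,0)\le s\le t\}$. For fixed $s\in I$ the corresponding $t$-range is $[s,\min(s+\tau,T)]$; this reproduces precisely the kernel appearing in \eqref{eq:adjoint_regularized}, so that substituting \eqref{eq:adjoint_regularized} yields the sensitivity identity
\begin{equation*}
\int_0^T j^k_x(s)^\trp\,z(s)\,\ds=\int_0^T\lambda_k(t)^\trp\,F_1^k(t)\,(u(t)-u_k(t))\,\dt.
\end{equation*}

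Finally, I would assemble $\limsup_{\eps\to 0^+}\eps^{-1}(\phi(\eps)-\phi(0))$ contribution by contribution. The penalty is smooth and gives $\int_0^T(u_k-u^*)^\trp(u-u_k)\,\dt$; the $x$-dependence of $j$ gives $\int_0^T j^k_x(t)^\trp z(t)\,\dt$ via differentiability in the second argument. For the $u$-dependence of $j$, convexity of $j(t,x_\eps(t),\cdot)$ together with $u_\eps=(1-\eps)u_k+\eps u$ yields
\[
\eps^{-1}\bigl[j(t,x_\eps(t),u_\eps(t))-j(t,x_\eps(t),u_k(t))\bigr]\le j(t,x_\eps(t),u(t))-j(t,x_\eps(t),u_k(t)),
\]
whose right-hand side converges pointwise to $j(t,x_k(t),u(t))-j(t,x_k(t),u_k(t))$ by continuity of $j$ in $x$; boundedness of $U$ and of $x_\eps$ near $x_k$ lets a dominated-convergence argument transfer this to the integral. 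Adding the three contributions, using $\ge 0$, inserting the sensitivity identity, and exploiting the affine decomposition $F_1^k\,(u_k-u)=F(x_k,\lie_k(x_{k,\cdot}),u_k)-F(x_k,\lie_k(x_{k,\cdot}),u)$, a direct rearrangement yields \eqref{eq:maximum-principle}. The main obstacle is the non-smoothness of $j$ in $u$, which forces the one-sided convex variational $\limsup$ instead of a Gâteaux derivative; a secondary technicality is the bookkeeping in the order-of-integration swap in the retarded adjoint term, where the truncations at $s=0$ (coming from $z\equiv 0$ on $[-\tau,0]$) and at $t=T$ must be tracked to produce the $\min(s+\tau,T)$ in \eqref{eq:adjoint_regularized}.
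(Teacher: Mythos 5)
Your proposal is correct in outline but takes a genuinely different route from the paper. The paper does not differentiate the control-to-state map at all: it invokes the abstract maximum principle of \cite[Theorem~1]{Banks69} for hereditary systems, which directly supplies a multiplier $\lambda$ of bounded variation satisfying a Volterra equation with Stieltjes kernels $\eta_0,\eta$; the bulk of the paper's proof then consists of computing these kernels explicitly for the terms $F^k_x$, $F^k_y\lie_k'$ and $j^k_x$, interchanging the order of integration, and reading off that $\lambda$ is in fact in $W^{1,\infty}(I;\R^n)$ and solves \eqref{eq:adjoint_regularized}; the integrated maximum principle \eqref{eq:maximum-principle} is finally extracted from Banks' inequality (which carries the quadratic penalty) by essentially the same convexity argument you use. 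Your direct variational argument --- directional differentiability of $u\mapsto x$, backward well-posedness of the linear adjoint equation, duality via integration by parts and Fubini, and a one-sided difference quotient for the convex $u$-dependence of $j$ --- is self-contained, avoids the discussion of Banks' regular point condition (cf.\ \cref{rem:on_regular_point}), and exploits the convexity of $U$ through the admissibility of $u_\eps$. The price is that the two ingredients you only sketch, namely the differentiability of the solution map of the retarded equation and the solvability of the backward Volterra integro-differential equation for $\lambda_k$, must actually be proved; both do follow from Picard--Gronwall arguments of the type used in \cref{thm:existence_general}, so this is extra work rather than a gap.

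One concrete correction is needed: your sensitivity identity has the wrong sign. Testing the linearized equation with $\lambda_k$, integrating by parts with $\lambda_k(T)=0$ and $z(0)=0$, applying Fubini to the memory term and substituting \eqref{eq:adjoint_regularized} yields $\int_0^T j^k_x(s)\,z(s)\,\ds = \int_0^T \lambda_k(t)^\trp F_1^k(t)\,(u_k(t)-u(t))\,\dt$, not $(u(t)-u_k(t))$. With the sign as you wrote it, the final rearrangement would produce the reverse inequality between the $F$-terms and the $j$-terms, i.e.\ not \eqref{eq:maximum-principle}. With the corrected sign, the first-order condition $0\le \int_0^T j^k_x(t)\,z(t) + \bigl(j(t,x_k(t),u(t))-j(t,x_k(t),u_k(t))\bigr) + (u_k(t)-u^*(t))^\trp(u(t)-u_k(t))\,\dt$ together with the affine decomposition of $F$ rearranges exactly to \eqref{eq:maximum-principle}.
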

\begin{proof}
Due to the assumptions, the control $u_k$ is a local solution of an optimal control problem without the constraint \eqref{eq:constraint-regularized}.
 We are going to apply \cite[Theorem 1]{Banks69}.
 \added{%
  Note that the regular point condition therein,
  is only required in the case of a free end time
  or terminal constraints,
  both of which are absent in our problem,
  see also the discussion in \cref{rem:on_regular_point}.%
 }
 Due to the standing \cref{asm:standing}, the requirements on the problem are fulfilled.
 Hence, there exists multipliers $\lambda_0$ and $\lambda$, satisfying a system that constitutes the
 optimality conditions of the regularized problem.
 We will develop this system in the course of the proof using the notation of \cite{Banks69}.

 Since the control problem does not include constraints on $x(T)$, we can set $\lambda_0=-1$.
 By
 \added{
 the theorem mentioned above,
 }
 there exists an adjoint state $\lambda:I\to \R^n$ of bounded variation,
 such that
 \begin{equation}\label{eq523}
  \lambda(s)^\trp + \int_s^T \lambda_0 \eta_0(t,s) + \lambda(t)^\trp \eta(t,s)\dt = \lambda^\trp(T)=0 \quad \forall s\in[0,T).
 \end{equation}
Here, the matrix-valued quantities $\eta(s,t)$ and $\eta_0(s,t)$ are defined by the equations%
\footnote{
The $i$-th component of the integral on the right-hand side is defined as $\sum_{j=1}^n \int_{-\tau}^t  \xi_j(s) \d_s\eta_{i,j}(t,s)$.
}
\[
F^k_x(t) \xi(t)+ F^k_y(t)\lie_k'(x_{k,t}) \xi_t
= \int_{-\tau}^t  \d_s\eta(t,s)\xi(s)
\quad \forall \xi\in C(I_\tau;\R^n), t\in I,
\]
where the latter integral denotes the Lebesgue-Stieltjes integral with respect to the integration variable $s$,
and
\[
j^k_x(t) \xi(t) = \int_{-\tau}^t  \d_s\eta_0(t,s)\xi(s) \quad \forall \xi\in C(I_\tau;\R), t\in I.
\]
In order to investigate the adjoint equation, we have to calculate an explicit expression of $\eta$.
First, it is not difficult to check, see also \cite[Section 4]{Banks69}, that
it holds
\[
 F^k_x(t) \xi(t) = \int_{-\tau}^t  \d_s\eta_1(t,s)\xi(s)
 \quad \forall \xi\in C(I_\tau;\R^n), t\in I
\]
for $\eta_1$ defined by
\[
\eta_1(t,s) = -\chi_{[-\tau,t)}(s)\cdot   F^k_x(t),
\]
which directly implies
\begin{equation}\label{eq:eta1}
\int_s^T \lambda(t)^\trp \eta_1(t,s)\dt = -\int_s^T\lambda(t)^\trp F^k_x(t)\dt.
\end{equation}
Analogously, we get for $\eta_0(t,s) = -\chi_{[-\tau,t)}(s)\cdot   j^k_x(t)$
\begin{equation}\label{eq:eta0}
 \int_s^T \lambda_0 \eta_0(t,s)\dt = - \int_s^T \eta_0(t,s)\dt = \int_s^T j^k_x(t)\dt.
\end{equation}
Second, we find by elementary calculations
\[\begin{split}
 F^k_y(t)\lie_k'(x_{k,t}) \xi_t &=  F^k_y(t)
		\frac{\int_{t-\tau}^t \exp(k \, x_k(s)) \odot  \xi(s) \, \ds}
		{\int_{t-\tau}^t \exp(k \, x_k(\hat s)) \, \d\hat s}
		\\
		&= F^k_y(t)
		\int_{-\tau}^t \chi_{(t-\tau,t)}(s)
		\diag{
		\frac{\exp(k \, x_k(s))}
		{\int_{t-\tau}^t \exp(k \, x_k(\hat s)) \, \d\hat s}
		}
		\xi(s) \, \ds
		.
\end{split}\]
With the choice
\[
 \eta_2(t,s):= -F^k_y(t)
		\int_s^t \chi_{(t-\tau,t)}(s')
		\diag{
		\frac{\exp(k \, x_k(s'))}
		{\int_{t-\tau}^t \exp(k \, x_k(\hat s)) \, \d\hat s}
		}  \ds'
\]
we get the identity
\[
 F^k_y(t)\lie_k'(x_{k,t}) \xi_t =\int_{-\tau}^t  \d_s\eta_2(t,s)\xi(s).
\]
In addition, we find
\begin{equation}
\label{eq:eta2}
\begin{split}
\int_s^T \lambda(t)^\trp \eta_2(t,s)\dt
&=- \int_s^T \lambda(t)^\trp F^k_y(t)
		\int_s^t \chi_{(t-\tau,t)}(s')
		\diag{
		\frac{\exp(k \, x_k(s'))}
		{\int_{t-\tau}^t \exp(k \, x_k(\hat s)) \, \d\hat s}
		}   \ds' \,
\dt
\\
&=- \int_s^T \int_{s'}^{\min(s'+\tau,T)} \lambda(t)^\trp F^k_y(t)
		\diag{
		\frac{\exp(k \, x_k(s'))}
		{\int_{t-\tau}^t \exp(k \, x_k(\hat s)) \, \d\hat s}
		}
		\dt \, \ds'.
\end{split}
\end{equation}
Using $\eta:=\eta_1+\eta_2$, \eqref{eq:eta1}--\eqref{eq:eta2} in the adjoint equation \eqref{eq523} yields
\begin{multline*}
  \lambda(s)^\trp-\int_s^T\lambda(t)^\trp  F^k_x(t)\dt
  + \int_s^T j^k_x(t)\dt
  \\
  - \int_s^T \int_{s'}^{\min(s'+\tau,T)} \lambda(t)^\trp F^k_y(t)
		\diag{
		\frac{\exp(k \, x_k(s'))}
		{\int_{t-\tau}^t \exp(k \, x_k(\hat s)) \, \d\hat s}
		}
		\dt\,\ds'
  =0.
\end{multline*}
Since $\lambda$ is of bounded variation, the integrands are bounded functions, which implies
that $\lambda\in W^{1,\infty}(I;\R^n)$.
In addition, the differential equation
\begin{equation*}
  \lambda'(s)^\trp +\lambda(s)^\trp  F^k_x(s)
+\int_{s}^{\min(s+\tau,T)} \lambda(t)^\trp F^k_y(t)
		\diag{
		\frac{\exp(k \, x_k(s))}
		{\int_{t-\tau}^t \exp(k \, x_k(\hat s)) \, \d\hat s}
		}
		\dt
  =j^k_x(s)
\end{equation*}
is satisfied for almost all $s\in I$ with the terminal value $\lambda(T) =0$.
In addition, the result of \cite{Banks69} includes the maximum principle
\begin{multline*}
 \int_0^T \lambda_k(t)^\trp F(x_k(t),\lie_k(x_{k,t}),u_k(t)) - j(t,x_k(t),u_k(t))-  \frac12 \, \abs{u_k(t)-u^*(t)}^2\, \dt \ge
 \\
 \int_0^T \lambda_k(t)^\trp F(x_k(t),\lie_k(x_{k,t}),u(t)) - j(t,x_k(t),u(t))-  \frac12 \, \abs{u(t)-u^*(t)}^2\, \dt
\end{multline*}
for admissible $u$.
\added{
Hence, $u_k$ is the global minimum of the function
\[
 g(u):= \int_0^T -\lambda_k(t)^\trp F(x_k(t),\lie_k(x_{k,t}),u(t)) + j(t,x_k(t),u(t))\dt + \frac12 \|u(t)-u^*(t)\|_{L^2(I;\R^m)} ^2.
\]
As the mapping
 $u\mapsto\int_0^T -\lambda_k^\trp F(x_k,\lie_k(x_{k,t}),u) + j(t,x_k,u) \dt $
 is convex, the function $g$ is the sum of a convex function and the differentiable function $u\mapsto \frac12 \|u(t)-u^*(t)\|_{L^2(I;\R^m)} ^2$.
The necessary condition for $u_k$ to be a minimum of $g$ is given by \eqref{eq:maximum-principle}, see, e.g., \cite[Prop.\@ II.2.2]{EkelandTemam1999}.
}
\end{proof}
\added{
\begin{remark}
 \label{rem:on_regular_point}
 Let us give some further comments concerning the regular point condition
 which is utilized in \cite[Theorem~1]{Banks69},
 see also \cite[Theorem~4]{Banks1968}.
 This condition means that
 $t = T$ is a Lebesgue point of the functions
 \begin{align*}
  &t \mapsto j(t, x_k(t), u_k(t)) + \frac12 \, \abs{ u_k(t) - u^*(t)}^2
  ,
  \\
  &t \mapsto F(x_k(t), \lie_k(x_{k,t}), u_k(t))
  .
 \end{align*}
 By tracing the arguments in the proof, one can check that this condition
 is used only to derive a transversality condition
 in case of a free end time or terminal constraints.

 Alternatively, one could discuss a modified problem
 over a prolonged time horizon in which
 both the objective and the right-hand side of the state equation
 are extended by $0$.
 This means that for some $\hat T > 0$, we consider a problem over $[0,T+\hat T]$,
 for which the integrand in the objective is given by
 \begin{equation*}
  t \mapsto
  \begin{cases}
   j(t, x(t), u(t)) + \frac12 \, \abs{ u(t) - u^*(t)}^2
   &\text{if } t \in I \\
   0 & \text{else}
  \end{cases}
 \end{equation*}
 and with the right-hand side
 \begin{equation*}
  t \mapsto
  \begin{cases}
   F(x(t), \lie_k(x_t), u(t))
   &\text{if } t \in I \\
   0 & \text{else}
  \end{cases}
 \end{equation*}
 in the state equation.
 It is clear that this modified problem possesses a solution
 which is a constant extension of $(x_k, u_k)$
 and, therefore, the Lebesgue point property holds.
\end{remark}
}

Testing the adjoint equation \eqref{eq:adjoint_regularized} with a test function $v\in L^\infty(I_\tau;\R^n)$
with $v(t) = 0$ for $t \in (-\tau, 0)$
and undoing the interchanging
of integration order in the previous proof, we arrive at the following weak formulation of the
adjoint equation,
\begin{multline}\label{eq:adjoint_weak}
 \int_0^T
  \lambda_k'(t)^\trp v(t) +\lambda_k(t)^\trp  F^k_x(t)v(t)
  +
  \lambda_k(t)^\trp F^k_y(t)
		\frac{\int_{t-\tau}^t \exp(k \, x_k(s)) \odot v(s) \, \ds}
		{\int_{t-\tau}^t \exp(k \, x_k(\hat s)) \, \d\hat s}
  \dt
  \\
  =
  \int_0^T j^k_x(t)v(t)\dt
  ,
\end{multline}
which is more suitable for studying the limit $k\to\infty$.

\begin{theorem}
\label{thm:boundedness_adjoints}
The sequence $(\lambda_k)$ is bounded in $L^\infty(I;\R^n)$ and $W^{1,1}(I;\R^n)$.
\end{theorem}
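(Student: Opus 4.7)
The plan is to integrate the adjoint equation backward from $s$ to $T$ (using $\lambda_k(T)=0$), obtain an integral inequality for $|\lambda_k|$, and apply a backward Gronwall argument to obtain the $L^\infty$ bound. The $W^{1,1}$ bound then follows by plugging the $L^\infty$ bound back into the adjoint equation and taking an $L^1$ norm in $s$.

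First I would establish that, uniformly in $k$, the quantities $F^k_x(t)$, $F^k_y(t)$, and $j^k_x(t)$ are bounded in $L^\infty(I)$. This uses \cref{thm:convergence_regularized_optimal_controls}, which gives $x_k \to x^*$ in $C(I;\R^n)$, hence $(x_k)$ is uniformly bounded, and the estimate from \cref{lem:logintexp_1} then ensures $\lie_k(x_{k,t})$ is bounded uniformly in $k$ and $t$. Continuity of the derivatives $F_{0,x}, F_{1,x}, F_{0,y}, F_{1,y}$ together with boundedness of $u_k$ (since $u_k(t) \in U$ and $U$ is compact) yields uniform bounds for $F^k_x, F^k_y$; similarly $j_x$ is continuous in $(x,u)$, which gives the bound on $j^k_x$. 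Denote the common uniform bound by $C>0$.

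Next, the crucial observation is that the kernel weight
\[
 w_k(t,s) := \frac{\exp(k\,x_k(s))}{\int_{t-\tau}^t \exp(k\,x_k(\hat s))\,\d\hat s}
\]
satisfies $\int_{t-\tau}^t w_k(t,s)\,\ds = \mathbf{1}$ componentwise. Integrating the adjoint equation \eqref{eq:adjoint_regularized} from $s$ to $T$ and exchanging the order of integration in the double integral (the region $\{(\sigma, t) : \sigma \in [s,T],\ t\in[\sigma,\min(\sigma+\tau,T)]\}$ equals $\{(\sigma,t) : t\in[s,T],\ \sigma \in [\max(t-\tau,s), t]\}$) produces
\[
 \lambda_k(s)^\trp = \int_s^T j^k_x(t) - \lambda_k(t)^\trp F^k_x(t) - \lambda_k(t)^\trp F^k_y(t)\int_{\max(t-\tau,s)}^t \diag{w_k(t,\sigma)}\,d\sigma \,\dt.
\]
Since $\int_{\max(t-\tau,s)}^t w_k(t,\sigma)\,d\sigma \le \mathbf{1}$ componentwise, taking absolute values gives
\[
 |\lambda_k(s)| \le C\,T + 2C \int_s^T |\lambda_k(t)|\,\dt.
\]
The (backward) Gronwall lemma then yields a uniform bound $\|\lambda_k\|_{L^\infty(I;\R^n)} \le C' := C\,T \exp(2CT)$.

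For the $W^{1,1}$ bound, I would use the adjoint equation pointwise:
\[
 |\lambda_k'(s)| \le |\lambda_k(s)|\,|F^k_x(s)| + \int_s^{\min(s+\tau,T)} |\lambda_k(t)|\,|F^k_y(t)|\,|w_k(t,s)|\,\dt + |j^k_x(s)|.
\]
Integrating in $s$ over $I$ and again swapping the order in the double integral, $\int_0^T \int_s^{\min(s+\tau,T)}(\cdots)\dt\,ds = \int_0^T |\lambda_k(t)|\,|F^k_y(t)| \int_{\max(t-\tau,0)}^t |w_k(t,s)|\,\ds\,\dt$, and using $\int_{\max(t-\tau,0)}^t w_k(t,s)\,\ds \le \mathbf{1}$, I obtain $\|\lambda_k'\|_{L^1(I;\R^n)} \le (2CC' + C)\,T$, completing the argument.

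The main obstacle is the double integral term containing the LogIntExp weights: a naive estimate would produce a factor that blows up with $k$, since pointwise $w_k(t,s)$ concentrates on the $\arg\max$ of $x_k(\cdot)$ on $[t-\tau,t]$. The correct approach exploits the stochastic-kernel property $\int_{t-\tau}^t w_k(t,s)\,\ds = \mathbf{1}$, together with Fubini, to absorb the weight entirely. Once this swap is done, both bounds follow from standard Gronwall-type reasoning.
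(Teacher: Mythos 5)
Your proposal is correct and follows essentially the same route as the paper: both arguments hinge on the observation that the LogIntExp kernel $w_k(t,\cdot)$ integrates to one over $[t-\tau,t]$, which absorbs the term that would otherwise blow up with $k$, followed by a backward Gronwall argument for the $L^\infty$ bound and an $L^1$ integration of the adjoint equation for the $W^{1,1}$ bound. The only cosmetic difference is that you integrate the strong form and apply Fubini explicitly, whereas the paper tests the already Fubini-swapped weak formulation \eqref{eq:adjoint_weak} with the test functions $\chi_{(t',T)}\lambda_k/|\lambda_k|$ and $\operatorname{sign}(\lambda_k')$.
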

\begin{proof}
 Since $(x_k)$ and $(u_k)$ are bounded in $C(I;\R^n)$ and $L^\infty(I;\R^m)$, respectively, we find that $(F^k_x)$, $(F^k_y)$, and $(j^k_x)$
 are bounded in $L^\infty(I;\R^{n\added{\times}n})$ and $L^\infty(I;\R^n)$, respectively.
 \added{
 Let $t'\in I$.
 }
 Setting $v(s) = \chi_{(t',T)}(s)\frac{\lambda_k(s)}{|\lambda_k(s)|}$ in \eqref{eq:adjoint_weak}, we obtain
 \[\begin{split}
  |\lambda_k(t')| &\le C \int_{t'}^T |\lambda_k(t)| + |\lambda_k(t)| \cdot \left| \frac{\int_{\max(t-\tau,t')}^t \exp(k \, x_k(s)) \odot \lambda_k(s)\cdot |\lambda_k(s)|^{-1} \, \ds}
		{\int_{t-\tau}^t \exp(k \, x_k(\hat s)) \, \d\hat s} \right| + 1\, \dt
		\\
		&\le  C \int_{t'}^T |\lambda_k(t)| + 1\, \dt
		\end{split}
 \]
with some constant $C$ independent of $k$.
By Gronwall's inequality
\added{
in integral form,
}
we find that $(\lambda_k)$ is bounded in $L^\infty(I;\R^n)$.

Now let $v\in L^\infty(I;\R^n)$ be arbitrary. Then we get from \eqref{eq:adjoint_weak}
the estimate
\[\begin{split}
 \int_0^T
  \lambda_k'(t)^\trp v(t) \dt
  &\le
  C\int_0^T |\lambda_k(t)| \cdot |v(t)|
  +
  ( |\lambda_k(t)| +1)\cdot \|v\|_{L^\infty(I;\R^n)}\dt
  \\ &
  \le C (\|\lambda_k\|_{L^\infty(I;\R^n)}+1)\|v\|_{L^\infty(I;\R^n)}
  \end{split}
\]
with some constant $C>0$ independent of $k$.
\added{By choosing $v = \operatorname{sign}(\lambda_k')$, we obtain the desired boundedness
of $(\lambda_k')$ in $L^1(I;\R^n)$.}
\end{proof}

\begin{lemma}
 \label{lem:liepk}
 The sequence of functions $(t,s)\mapsto \lie_k'(x_{k,t})(s)$ is uniformly bounded in $L^\infty(I;L^1(I_\tau))^n$, where
 we used the notation
\begin{equation*}
 \lie_k'(x_t)(s)
	=\frac{\chi_{(t-\tau,t)}(s) \exp\parens[\big]{k \, x(s)} }
	{\int_{t-\tau}^t \exp\parens[\big]{k \, x(\hat s)} \, \d\hat s}.
\end{equation*}
\end{lemma}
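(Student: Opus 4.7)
The plan is to observe that, by design, the density
\[
 s \mapsto \lie_k'(x_{k,t})(s) = \frac{\chi_{(t-\tau,t)}(s)\exp(k\,x_k(s))}{\int_{t-\tau}^t \exp(k\,x_k(\hat s))\,\d\hat s}
\]
is essentially a probability density on $(t-\tau,t)$ (component-wise), so a direct computation of its $L^1(I_\tau)$-norm should collapse to a constant independent of $t$ and $k$.

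More precisely, I would first recall from \cref{sec:logintexp} that $\lie_k$ is applied to each component separately, so that for each $i \in \{1,\dots,n\}$ the $i$-th component of $\lie_k'(x_{k,t})(s)$ is nonnegative (since it is a quotient of exponentials times an indicator). Then for any fixed $t \in I$ and each component $i$, I would simply compute
\[
 \int_{-\tau}^{T} \abs[\big]{\lie_k'(x_{k,t})(s)_i} \, \ds
 =
 \int_{t-\tau}^{t} \frac{\exp(k\,x_{k,i}(s))}{\int_{t-\tau}^t \exp(k\,x_{k,i}(\hat s))\,\d\hat s}\, \ds
 =
 1,
\]
where the last equality follows because the $s$-integral in the numerator exactly equals the $\hat s$-integral in the denominator.

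Taking the essential supremum over $t \in I$ on the left-hand side then immediately gives a uniform (in $k$) bound of $1$ in $L^\infty(I; L^1(I_\tau))$ for each component, and hence boundedness by $\sqrt{n}$ (or similar, up to the choice of norm on $\R^n$) in $L^\infty(I; L^1(I_\tau))^n$. There is no real obstacle here: the estimate is an equality and all measurability questions are already handled by the fact that $x_k$ is continuous and bounded, so $\exp(k\,x_{k,i}(\cdot))$ is continuous and strictly positive, making the denominator strictly positive for all $t$.
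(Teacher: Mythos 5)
Your proof is correct and follows essentially the same route as the paper: nonnegativity of the integrand plus the observation that the numerator integrates to exactly the denominator, giving $\norm{\lie_k'(x_{k,t})}_{L^1(I_\tau)} = 1$ component-wise, uniformly in $t$ and $k$. The paper states this identically (with the bound recorded as the all-ones vector $e$), so there is nothing to add.
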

\begin{proof}
 Obviously, this function is non-negative.
 Let $t\in I$ be given. Then it holds
 \[
  \norm[\big]{\lie_k'(x_t)}_{L^1(I_\tau)}
  =
  \int_{-\tau}^T
  \lie_k'(x_t)(s) \ds
  =
  \int_{-\tau}^T \frac{\chi_{(t-\tau,t)}(s) \exp\parens[\big]{k \, x(s)} }
  {\int_{t-\tau}^t \exp\parens[\big]{k \, x(\hat s)} \, \d\hat s} \ds
  = e.
 \]
 Here, the $e$ is the vector in $\R^n$ with all entries equal to $1$.
\end{proof}

\section{Passing to the limit in the optimality system}
\label{sec:limit}

\added{
Again, let $(x^*,u^*)$ be a local solution of the original problem.
Let $(x_k,u_k)$ be a sequence of global solutions of the regularized optimal control problem \eqref{eq:constraint-regularized}.
By \cref{thm:convergence_regularized_optimal_controls}, we have
the convergence $x_k\to x^*$ and $u_k\to u^*$ in $C(I;\R^n)$ and $L^2(I;\R^m)$, respectively.
Then \cref{thm:optimality_condition_regularized} gives a first-order necessary optimality condition.
}

In this section,
we are going to pass to the limit $k\to \infty$ in the optimality system provided by \cref{thm:optimality_condition_regularized}.
The main work is to understand the behaviour
of the expression $\lie_k'(x_{k,t})(s)$
which appears in the adjoint equation \eqref{eq:adjoint_weak}.
We define $\mu_k \in L^\infty(I;L^1(I_\tau))^n$ via
\begin{equation}
 \label{eq:def_mu_k}
 \mu_k(t,s) := \lie_k'(x_{k,t})(s),
\end{equation}
cf.\ \cref{lem:liepk}.
We have seen in \cref{lem:liepk}
that $\mu_k$ is bounded in the space
$L^\infty(I; L^1(I))^n$.
Since this space is neither a dual space nor a reflexive space,
we cannot extract a subsequence which is weak($\star$)ly convergent
in this space.
Therefore, we embed this space into a suitable dual space,
see \cref{subsec:dualSpaceL1}.
Finally, \cref{subsec:opt_con}
contains the necessary optimality system.

\subsection{The dual space of \texorpdfstring{$L^1(I;C(I_\tau))$}{L1(I;C(I\_tau))}}
\label{subsec:dualSpaceL1}
It is well known that $L^1(I_\tau)$ is not a dual space.
Therefore, $L^\infty(I; L^1(I_\tau))$ cannot be a dual space as well.
A typical remedy is to embed $L^1(I_\tau)$ into $\MM(I_\tau)$,
where $\MM(I_\tau)$ is the space of regular signed Borel measures on $I_\tau$
equipped with the total variation norm,
since this is the dual space of $C(I_\tau)$,
see \cite[Theorem~6.19]{Rudin1987}.
We will see below,
that the dual space of $L^1(I;C(I_\tau))$ will be useful
in our situation.
\added{For $1 \le p < \infty$ and $p^{-1} + q^{-1} = 1$,
the dual space of $L^p(I;X)$ is $L^q(I;X\dualspace)$
if and only if
$X\dualspace$ possesses the
Radon-Nikodym property (RNP), see \cite[Theorem~IV.1]{DiestelUhl1977}.
However, $\MM(I_\tau)$ does not have the RNP,
see \cite[Section~VII.7]{DiestelUhl1977}.
Therefore, $L^1(I;C(I_\tau))\dualspace \ne L^\infty(I;\MM(I_\tau))$.}
In order to characterize the dual space,
we have to introduce a space of weak-$\star$ measurable functions.
We follow the presentation in \cite[Section~10.1]{PapageorgiouKyritsi-Yiallourou2009}.
\begin{definition}
 \label{def:linftymu}
 A function $\nu : I \to \MM(I_\tau)$ is said to be weak-$\star$ measurable
 if the function
 \begin{equation*}
  t \mapsto \dual{\nu(t)}{z}
 \end{equation*}
 is measurable for all $z \in C(I_\tau)$.
 If $\nu_1, \nu_2$ are weak-$\star$ measurable,
 we define the equivalence relation $\sim$
 via
 \begin{equation*}
  \nu_1 \sim \nu_2
  \quad\Leftrightarrow\quad
  \dual{\nu_1(t)-\nu_2(t)}{z} = 0
  \text{ f.a.a.\ $t \in I$ for all $z \in C(I_\tau)$}.
 \end{equation*}
 \added{By this definition,} the null set may depend on $z$.
 \added{%
  However, since $C(I_\tau)$ is separable, the null set can be chosen
  independently of $z \in C(I_\tau)$.
 }
 Finally, the space $L^\infty_{\added{w}}(I; \MM(I_\tau))$
 consists of all equivalence classes $[\nu]$ of weak-$\star$ measurable functions $\nu : I \to \MM(I_\tau)$
 satisfying
 \begin{equation*}
  \abs[\big]{\dual{\nu(t)}{z}} \le c \, \norm{z}_{C(I_\tau)}
  \qquad
  \text{f.a.a.\ $t \in I$ for all $z \in C(I_\tau)$}.
 \end{equation*}
 for some $c \ge 0$.
 Again, the null set \added{can be chosen independently of} $z\in C(I_\tau)$.
 The infimum of all these constants $c \ge 0$ is denoted by $\norm{\nu}_{L^\infty_{\added{w}}(0,T;\MM(I_\tau))}$.
 This is a norm on $L^\infty_{\added{w}}(0,T;\MM(I_\tau))$.
\end{definition}
\added{Note that a Bochner-measurable function from $I$ to $L^1(I_\tau)$ is also weak-$\star$ measurable as a function from $I$ to $\MM(I_\tau)$.}
\begin{theorem}
 \label{thm:dual_of_l1c}
 The dual space of $L^1(0,T;C(I_\tau))$
 is isometrically isomorphic to the space $L^\infty_{\added{w}}(I;\MM(I_\tau))$ via the duality pairing
 \begin{equation*}
  \dual{\nu}{z}
  :=
  \int_0^T \dual{\nu(t)}{z(t)} \, \dt
 \end{equation*}
 for all $\nu \in L^\infty_{\added{w}}(I;\MM(I_\tau))$ and $z \in L^1(I;C(I_\tau))$.
\end{theorem}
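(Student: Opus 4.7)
The plan is to construct the isomorphism $\Phi \colon L^\infty_w(I;\MM(I_\tau)) \to L^1(I;C(I_\tau))\dualspace$ from the pairing formula, and to verify in turn (a) well-definedness together with the norm bound $\norm{\Phi(\nu)}_{op} \le \norm{\nu}_{L^\infty_w}$, (b) the reverse bound (which entails injectivity), and (c) surjectivity. The only non-trivial structural ingredient I would use is the separability of $C(I_\tau)$; this is what makes the construction go through in spite of the failure of the Radon–Nikodym property for $\MM(I_\tau)$ noted before the theorem.

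For (a), I would first define the pairing on simple functions $z(t) = \sum_{i=1}^N \chi_{A_i}(t)\, \zeta_i$ with $\zeta_i \in C(I_\tau)$ and pairwise disjoint measurable $A_i \subset I$. For such $z$, measurability of $t \mapsto \dual{\nu(t)}{z(t)}$ follows directly from \cref{def:linftymu}, and the essential bound on $\dual{\nu(t)}{\zeta_i}$ yields $\abs{\dual{\nu}{z}} \le \norm{\nu}_{L^\infty_w} \, \norm{z}_{L^1(I;C(I_\tau))}$. Since simple functions are dense in $L^1(I;C(I_\tau))$ by the standard theory of Bochner integrals (here separability of $C(I_\tau)$ enters), this extends the pairing and gives the estimate. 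For (b), I would fix $c < \norm{\nu}_{L^\infty_w}$, choose a countable dense set $\{\zeta_n\} \subset C(I_\tau)$, discard countably many null sets, and locate an index $n$ and a set $A \subset I$ of positive measure on which $\abs{\dual{\nu(t)}{\zeta_n}} > c\, \norm{\zeta_n}_{C(I_\tau)}$. Testing with $z(t) = \chi_A(t) \operatorname{sign}(\dual{\nu(t)}{\zeta_n})\, \zeta_n$ yields $\norm{\Phi(\nu)}_{op} \ge c$ and, by letting $c \nearrow \norm{\nu}_{L^\infty_w}$, equality of norms.

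The main obstacle is (c). Given $\ell \in L^1(I;C(I_\tau))\dualspace$, and for each fixed $\zeta \in C(I_\tau)$, the functional $f \mapsto \ell(f \cdot \zeta)$ on $L^1(I)$ is bounded by $\norm{\ell} \, \norm{\zeta}_{C(I_\tau)}$, hence is represented by some $g_\zeta \in L^\infty(I)$ of comparable norm. Fixing a countable $\Q$-linear dense subspace $D \subset C(I_\tau)$, one removes a single null set $N$ (the union of countably many null sets) so that on $I \setminus N$ the map $\zeta \mapsto g_\zeta(t)$ is $\Q$-linear on $D$ and bounded by $\norm{\ell}\, \norm{\zeta}_{C(I_\tau)}$; hence it extends uniquely and continuously to a functional $\nu(t) \in C(I_\tau)\dualspace = \MM(I_\tau)$. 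Weak-$\star$ measurability of $\nu$ holds because $t \mapsto \dual{\nu(t)}{\zeta}$ coincides with $g_\zeta$ for $\zeta \in D$ and is, for general $\zeta \in C(I_\tau)$, a uniform-in-$t$ limit of such measurable maps. Finally, $\Phi(\nu) = \ell$ is verified first on simple functions $z = \chi_A \zeta$ by construction of $g_\zeta$, and then on all of $L^1(I;C(I_\tau))$ by density. The difficulty lies precisely in this reconstruction step: the lack of the RNP prevents producing a Bochner-integrable density, and the separability of $C(I_\tau)$ is what allows the diagonal selection of a single good null set and the lift by continuity to a weak-$\star$ measurable $\nu$.
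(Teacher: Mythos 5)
Your proof is correct, but it is worth noting that the paper does not prove \cref{thm:dual_of_l1c} at all: it simply cites \cite[Theorem~10.1.16]{PapageorgiouKyritsi-Yiallourou2009} and \cite[Theorem~8.18.2]{Edwards1995}, adding only the remark that the measurability of $t \mapsto \dual{\nu(t)}{z(t)}$ is established in the first part of Edwards' proof. What you have written is a self-contained reconstruction of the standard argument that those references carry out, and all three steps are sound: the isometry in (a)--(b) via simple functions and a countable dense subset of $C(I_\tau)$, and the surjectivity in (c) via the disintegration $f \mapsto \ell(f\,\zeta)$ through $L^1(I)\dualspace = L^\infty(I)$, the removal of a single null set over a countable $\mathbb{Q}$-linear dense subspace, and the extension by uniform continuity to a weak-$\star$ measurable $\MM(I_\tau)$-valued map. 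Your closing observation correctly identifies why this bypasses the Radon--Nikodym obstruction flagged in the paper before the theorem: one never produces a Bochner-measurable density, only a weak-$\star$ measurable one, and separability of $C(I_\tau)$ is exactly what permits the diagonal null-set selection. The only point I would ask you to make explicit is the one the paper itself singles out: for a general $z \in L^1(I;C(I_\tau))$ you must check that $t \mapsto \dual{\nu(t)}{z(t)}$ is measurable so that the pairing is literally given by the stated integral and not merely by continuous extension from simple functions; this follows from your own setup, since $z$ is an a.e.\ limit of simple functions and $\abs{\dual{\nu(t)}{z(t)} - \dual{\nu(t)}{z_n(t)}} \le c \, \norm{z(t) - z_n(t)}_{C(I_\tau)}$ almost everywhere, but it deserves a sentence. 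With that addition your argument is a complete substitute for the citation.
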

This result can be found in
\cite[Theorem~10.1.16]{PapageorgiouKyritsi-Yiallourou2009},
see also
\cite[Theorem~8.18.2]{Edwards1995}.
We also note that the measurability of
\begin{equation*}
 t \mapsto \dual{\nu(t)}{z(t)}
\end{equation*}
(which is essential for the definition of the above duality pairing)
is proven in the first part of the proof of
\cite[Theorem~8.18.2]{Edwards1995}.

In the sequel, it will be useful to define
a function $f \otimes y \in L^1(I;C(I_\tau))$ for $f \in L^1(I)$
and $y \in C(I_\tau)$
via
\begin{equation*}
 (f \otimes y)(t,s) := f(t) \, y(s)
\end{equation*}
for all $t \in I$ and $s \in I_\tau$.
In addition, we need the following notation
for the component-wise application of $\mu \in \MM(I_\tau)^n$ to
$v\in C(I_\tau)^n$:
\begin{equation}\label{eq:def_odot}
 \langle \mu \odot v \rangle := \big(\, \langle \mu_i,v_i\rangle \, \big)_{i=1,\ldots,n}.
\end{equation}
An analogue notation is used for
$\mu \in L^\infty_{\added{w}}(I; \MM(I_\tau))^n$ and $v \in L^1(I; C(I_\tau))^n$.

Now we are faced with the following situation:
$\mu_k$ is a bounded sequence in the space $L^\infty(I;L^1(I_\tau))^n$
(\cref{lem:liepk})
and
this space is isometrically embedded into the dual space
$L^\infty_{\added{w}}(I;\MM(I_\tau))^n = (L^1(I;C(I_\tau))^n)\dualspace$.
This leads to the following result.
\begin{lemma}
 \label{lem:convergence}
 We can extract a subsequence of $(\mu_k)$
 (without relabeling) such that
 \begin{equation*}
  \mu_k \weaklystar \mu
  \quad\text{in }
  (L^1(I; C(I_\tau))^n)\dualspace
  =
  L^\infty_{\added{w}}(I; \MM(I_\tau))^n.
 \end{equation*}
 The limit $\mu$
 satisfies
 \begin{equation*}
  \mu(t)
  \in
  \partial \max x_t
 \end{equation*}
 for a.a.\ $t \in I$.
 Here, $\partial \max x_t$
 is the (coefficientwise) (convex) subdifferential of the function
 $C(I_\tau)^n \ni x \mapsto \max x_t$ at the optimal state $x$.
\end{lemma}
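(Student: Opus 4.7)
The plan is to first extract a weak-$\star$ convergent subsequence by invoking the sequential Banach--Alaoglu theorem, and then to verify the subgradient property by passing to the limit in the subgradient inequality for $\lie_k$ at $x_{k,t}$.

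For the first step, I would note that $L^1(I;C(I_\tau))$ is separable because $C(I_\tau)$ is separable (as $I_\tau$ is a compact metric space). Thanks to \cref{thm:dual_of_l1c}, the bounded sequence $(\mu_k)$ from \cref{lem:liepk}, regarded component-wise as an element of $L^\infty_w(I;\MM(I_\tau))^n$ via the canonical embedding of $L^1(I_\tau)$ into $\MM(I_\tau)$, lies in a weak-$\star$ sequentially compact ball. Hence we can extract a subsequence converging to some $\mu \in L^\infty_w(I;\MM(I_\tau))^n$.

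For the second step, I would use that $\lie_k$ is convex and continuously differentiable on $L^\infty$ (\cref{lem:lie_on_linfty}), so the subgradient inequality
\begin{equation*}
 \lie_k(x_{k,t} + w) \ge \lie_k(x_{k,t}) + \langle \mu_k(t,\cdot) \odot w \rangle
\end{equation*}
holds for every $w \in C(I_\tau)^n$ and every $t \in I$ (with $\odot$ as in \eqref{eq:def_odot}). Multiplying by an arbitrary non-negative $f \in L^1(I)$ and integrating over $I$ gives
\begin{equation*}
 \int_I f(t)\, \lie_k(x_{k,t} + w)\,\dt
 \ge
 \int_I f(t)\, \lie_k(x_{k,t})\,\dt
 + \int_I f(t)\, \langle \mu_k(t,\cdot) \odot w \rangle\,\dt.
\end{equation*}
The right-most term converges to $\int_I f(t) \langle \mu(t)\odot w\rangle\,\dt$ by the weak-$\star$ convergence of $\mu_k$ tested against $f \otimes w \in L^1(I;C(I_\tau))^n$. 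For the other two terms, I combine the Lipschitz continuity of $\lie_k$ with rank $1$ (\cref{lem:lie_on_linfty}) with the strong convergence $x_k \to x$ in $C(I_\tau;\R^n)$ (from \cref{thm:convergence_regularized_optimal_controls} together with $x_k \equiv x \equiv \phi$ on $[-\tau,0]$) and the pointwise convergence $\lie_k(y_t) \to \max y_t$ from \cref{lem:logintexp_1}. Combined with the uniform bound on $\lie_k$ in terms of $\|x\|_{L^\infty} + \|w\|_{L^\infty}$ and dominated convergence, this yields in the limit
\begin{equation*}
 \int_I f(t) \bracks[\big]{\max(x+w)_t - \max x_t - \langle \mu(t)\odot w\rangle}\,\dt \ge 0.
\end{equation*}
Since $f \ge 0$ is arbitrary, the integrand is non-negative for almost every $t \in I$, with an exceptional null set that may depend on $w$.

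The main obstacle is removing the dependence of the null set on $w$. To do this, I would fix a countable dense subset $\{w_m\} \subset C(I_\tau)^n$ and obtain a common null set $N$ outside of which the subgradient inequality holds for every $w_m$. For $t \notin N$, the map $w \mapsto \langle \mu(t)\odot w\rangle$ is continuous on $C(I_\tau)^n$ because $\mu(t) \in \MM(I_\tau)^n$ (with total variation controlled by $\|\mu\|_{L^\infty_w(I;\MM(I_\tau))^n}$), and $w \mapsto \max(x+w)_t$ is continuous as well; a density argument then extends the inequality to all $w \in C(I_\tau)^n$ and thereby establishes $\mu(t) \in \partial \max x_t$ component-wise for almost every $t \in I$.
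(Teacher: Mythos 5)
Your proposal is correct and follows essentially the same route as the paper's proof: extraction via Banach--Alaoglu (the paper leaves the separability of $L^1(I;C(I_\tau))$ implicit), the subgradient inequality for $\lie_k$ at $x_{k,t}$ integrated against a non-negative test function, passage to the limit via the rank-$1$ Lipschitz continuity of $\lie_k$ together with \cref{lem:l1norm}, and a separability/density argument to make the null set independent of the test direction. Your increment form $\lie_k(x_{k,t}+w)\ge\lie_k(x_{k,t})+\langle\mu_k(t,\cdot)\odot w\rangle$ is in fact the cleaner way to state the inequality that the paper writes with the point $z_t$ directly.
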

\begin{proof}
 The first claim follows from the
 Banach-Alaoglu-Bourbaki theorem.

 The definition \eqref{eq:def_mu_k} of $\mu_k$ implies
 \begin{equation*}
  \mu_k(t, \cdot) \in \partial \lie_k(x_{k,t}).
 \end{equation*}
 Here, $\partial \lie_k(x_{k,t})$ is the coefficientwise
 convex subdifferential of $C(I_\tau)^n \ni x_k \mapsto \lie_k(x_{k,t})$ at $x_k$.
 Thus, for arbitrary $\varphi \in L^\infty(I)$, $\varphi \ge 0$
 and $z \in C(I_\tau)^n$,
 we have
 \begin{equation*}
  \lie_k(z_t) \ge \lie_k(x_{k,t}) + \int_{-\tau}^T \mu_k(t, s) \odot z(s) \, \ds,
 \end{equation*}
 thus,
 \begin{align*}
  \int_0^T \varphi(t) \, \lie_k(z_t) \, \dt
  &\ge
  \int_0^T \varphi(t) \, \lie_k(x_{k,t}) \, \dt
  +
  \int_0^T \int_{-\tau}^T \varphi(t) \, \mu_k(t, s) \odot z(s) \, \ds \, \dt
  \\
  &\ge
  \int_0^T \varphi(t) \, \lie_k(x_{k,t}) \, \dt
  +
  \dualo{\mu_k}{(\varphi \otimes z)}
  .
 \end{align*}
 By using \cref{lem:l1norm} and the Lipschitz continuity
 of $\lie_k$,
 we can pass to the limit $k \to \infty$.
 This yields
 \begin{align*}
  \int_0^T \varphi(t) \, \max z_t \, \dt
  &\ge
  \int_0^T \varphi(t) \, \max x_t^{\added{*}} \, \dt
  +
  \dualo{\mu}{(\varphi \otimes z)}
  \\
  &=
  \int_0^T \varphi(t) \, \max x_t^{\added{*}} \, \dt
  +
  \int_0^T \varphi(t) \, \dualo{\mu(t)}{z} \, \dt
  .
 \end{align*}
 Since $\varphi \ge 0$ is arbitrary, this yields
 \begin{equation*}
  \max z_t
  \ge
  \max x_t^{\added{*}}
  +
  \dualo{\mu(t)}{z}
 \end{equation*}
 for a.a.\ $t \in I$.
 Note that the null set may depend on $z \in C(I_\tau)^n$.
 By using the separability of $C(I_\tau)^n$,
 we can show that the null set can be chosen independently of $z$.
 Thus,
 \begin{equation*}
  \max z_t
  \ge
  \max x_t^{\added{*}}
  +
  \dualo{\mu(t)}{z}
  \qquad \forall z \in C(I_\tau)^n
 \end{equation*}
 holds for a.a.\ $t \in I$.
 This shows the claim.
\end{proof}
The standard characterization of the subdifferential of
the maximum function yields the following properties of $\mu$.
\begin{corollary}
 \label{cor:mu}
 The limit $\mu$ from \cref{lem:convergence}
 satisfies $\mu \ge 0$,
 \begin{equation*}
  \norm{\mu(t)_i}_{\MM(I_\tau)}
  =
  1
  , \qquad \forall i =1,\ldots,n,
 \end{equation*}
 and
 \begin{equation*}
  \supp(\mu(t)_i)
  \subset
  \argmax_{s \in [t-\tau,t]} x_i(s),
  \qquad i=1,\ldots,n
 \end{equation*}
 for almost all $t \in I$.
\end{corollary}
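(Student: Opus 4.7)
The plan is to derive the three claims from the standard characterization of the convex subdifferential of the maximum functional, applied componentwise. I fix $t \in I$ for which the inclusion $\mu(t) \in \partial \max x_t^*$ from \cref{lem:convergence} holds, and fix an index $i \in \{1,\dots,n\}$. Writing $\Phi_i : C(I_\tau) \to \R$, $\Phi_i(z) := \max_{s \in [t-\tau,t]} z(s)$, the inclusion means
\begin{equation*}
 \Phi_i(z) \ge \Phi_i(x_i^*) + \int_{I_\tau} (z - x_i^*) \, d\mu(t)_i
 \qquad \forall z \in C(I_\tau),
\end{equation*}
and I verify the three claims by testing this inequality with well-chosen $z$.

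For nonnegativity, I would test with $z = x_i^* - \lambda \phi$ where $\phi \in C(I_\tau)$, $\phi \ge 0$ and $\lambda > 0$. Since $\Phi_i$ is monotone in its argument, $\Phi_i(x_i^* - \lambda \phi) \le \Phi_i(x_i^*)$, so dividing the subgradient inequality by $\lambda$ yields $\int_{I_\tau} \phi \, d\mu(t)_i \ge 0$; since $\phi \ge 0$ was arbitrary, this gives $\mu(t)_i \ge 0$. For the unit mass, I would test with $z = x_i^* + c$ for a scalar $c \in \R$; using $\Phi_i(x_i^* + c) = \Phi_i(x_i^*) + c$, the inequality reduces to $c \ge c \, \mu(t)_i(I_\tau)$, and choosing both signs of $c$ gives $\mu(t)_i(I_\tau) = 1$. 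Combined with nonnegativity, this is exactly the total variation norm.

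For the support inclusion, I would let $\phi \in C(I_\tau)$ be nonnegative with $\supp \phi \cap \argmax_{s \in [t-\tau,t]} x_i^*(s) = \emptyset$. The key localization uses that $\supp \phi \cap [t-\tau,t]$ is compact and contained in the relatively open set $\{s \in [t-\tau,t] : x_i^*(s) < \Phi_i(x_i^*)\}$; continuity of $x_i^*$ therefore yields some $\delta > 0$ with $x_i^* \le \Phi_i(x_i^*) - \delta$ on $\supp \phi \cap [t-\tau,t]$. For $\lambda \in (0, \delta/\norm{\phi}_{C(I_\tau)}]$, the perturbed function satisfies $x_i^* + \lambda \phi \le \Phi_i(x_i^*)$ on $[t-\tau,t]$, with equality attained on the argmax set (where $\phi$ vanishes); hence $\Phi_i(x_i^* + \lambda \phi) = \Phi_i(x_i^*)$. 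Plugging $z = x_i^* + \lambda \phi$ into the subgradient inequality gives $0 \ge \lambda \int_{I_\tau} \phi \, d\mu(t)_i$, and together with $\mu(t)_i \ge 0$ I conclude $\int_{I_\tau} \phi \, d\mu(t)_i = 0$. Letting $\phi$ range over all such test functions yields $\supp(\mu(t)_i) \subset \argmax_{s \in [t-\tau,t]} x_i^*(s)$.

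The main obstacle is the separation argument in the support step: I must use compactness of $\supp \phi$ within the open set where $x_i^*$ is strictly below its maximum to obtain a uniform gap $\delta$, ensuring that small positive perturbations do not raise the max. The nonnegativity and mass steps are essentially immediate from the subgradient inequality combined with the monotonicity and translation equivariance of $\Phi_i$.
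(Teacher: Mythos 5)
Your proof is correct and follows the same route as the paper: the paper simply invokes the ``standard characterization of the subdifferential of the maximum function'' applied to the inclusion $\mu(t)\in\partial\max x_t^*$ from \cref{lem:convergence}, and your argument is precisely a self-contained verification of that characterization (nonnegativity via monotone perturbations, unit mass via constant shifts, and the support inclusion via the compactness/uniform-gap argument). No gaps; you merely supply the details the paper leaves to the cited standard result.
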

Here, $\supp$ denotes the support of a measure.
Thus,
$\mu(t)_i$
is a probability measure
supported at the maximizers of $x$ on the interval $[t-\tau,t]$.

\subsection{Necessary optimality conditions}
\label{subsec:opt_con}
For abbreviation, let us define
\[
 F^*(t):= F(x^*(t),\max x^*_t,u^*(t)).
\]
Similarly, we define  $F^*_x(t)$, $F^*_y(t)$, $j^*_x$ to denote the derivatives of $F$ and $j$ with respect to the first and second argument, respectively,
evaluated along the optimal state and control.

\begin{theorem}
 \label{thm:opt_sys}
 Let  $(x^*,u^*)$ be a local solution of the original problem.
 Then there \added{are} $\lambda\in BV(I;\R^n)$ and $\mu\in  L^\infty_{\added{w}}(I; \MM(I_\tau))^n$ such that the following optimality system is satisfied:
\begin{enumerate}
 \item (Adjoint equation) For all $v\in C(I_\tau;\R^n)$ with $v(s)=0$ for all $s\in[-\tau,0]$ it holds
\begin{equation}\label{eq:nonsmooth_adjoint}
 \int_0^T \d\lambda(t)^\trp v(t)
 +\int_0^T\lambda(t)^\trp  F^*_x(t)v(t)
  +
  \lambda(t)^\trp F^*_y(t) \langle\mu(t) \odot v\rangle
  \dt
  =
    \int_0^T j^*_x(t)v(t)\dt
.
\end{equation}
Here, $\langle\mu(t)\odot v\rangle$ denotes the vector with entries $\langle \mu_i(t),v_i\rangle$, see \eqref{eq:def_odot}.
\item (Maximum principle)
The inequality
\begin{multline}\label{eq:nonsmooth-maximum-principle}
 \int_0^T \lambda(t)^\trp F(x^*(t),\max x^*_t,u^*(t)) - j(t,x^*(t),u^*(t))\dt \ge
 \\
 \int_0^T \lambda(t)^\trp F(x^*(t),\max x^*_t,u(t)) - j(t,x^*(t),u(t))\dt
\end{multline}
holds for all feasible controls $u$.

\item (Subdifferential condition)
The measure-valued function $\mu$ satisfies
 \begin{equation*}
  \mu(t)
  \in
  \partial \max x^*_t
 \end{equation*}
 for almost all $t\in I$.
 \end{enumerate}
 \end{theorem}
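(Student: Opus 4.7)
The strategy is to pass to the limit $k \to \infty$ in the regularized optimality system provided by \cref{thm:optimality_condition_regularized}. Let $(x_k, u_k)$ be a sequence of global solutions of the regularized problem with associated adjoint $\lambda_k$ and measure-density $\mu_k(t,s) = \lie_k'(x_{k,t})(s)$. By \cref{thm:convergence_regularized_optimal_controls} we already have $x_k \to x^*$ in $C(I;\R^n)$ and $u_k \to u^*$ in $L^2(I;\R^m)$; by the affine structure of $F$ and the continuity assumptions on $j_x$, this also yields $F^k_x \to F^*_x$, $F^k_y \to F^*_y$, and $j^k_x \to j^*_x$ in, say, $L^1(I;\R^{n\times n})$ and $L^1(I;\R^n)$, respectively.

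Next I would extract convergent subsequences of the adjoints and the measure densities. By \cref{thm:boundedness_adjoints}, $(\lambda_k)$ is bounded in $L^\infty(I;\R^n) \cap W^{1,1}(I;\R^n) \embeds BV(I;\R^n)$. Helly's selection theorem in the vector-valued setting produces (after a further subsequence) a limit $\lambda \in BV(I;\R^n)$ with $\lambda_k \to \lambda$ pointwise a.e., hence in every $L^p(I;\R^n)$, $p < \infty$, by dominated convergence, and $\lambda_k' \weaklystar \d\lambda$ in $\MM(I)^n$. The terminal condition $\lambda_k(T)=0$ transfers to $\lambda$. For $\mu_k$, \cref{lem:liepk} gives boundedness in $L^\infty_w(I;\MM(I_\tau))^n$, and \cref{lem:convergence} together with \cref{cor:mu} yields (along a further subsequence) a weak-$\star$ limit $\mu$ satisfying the subdifferential condition $\mu(t) \in \partial\max x^*_t$ for almost all $t$, which is item~(iii).

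Now I would pass to the limit in the weak adjoint equation \eqref{eq:adjoint_weak}, which I first rewrite using $\mu_k$ as
\begin{equation*}
 \int_0^T \lambda_k'(t)^\trp v(t) + \lambda_k(t)^\trp F^k_x(t) v(t) + \lambda_k(t)^\trp F^k_y(t)\, \langle \mu_k(t,\cdot)\odot v\rangle \,\dt = \int_0^T j^k_x(t) v(t)\,\dt
\end{equation*}
for $v \in C(I_\tau;\R^n)$ with $v \equiv 0$ on $[-\tau,0]$. The derivative term converges to $\int_0^T \d\lambda(t)^\trp v(t)$ by the weak-$\star$ convergence of $\lambda_k'$ and continuity of $v$. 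The second and right-hand side terms converge by the strong $L^1$-convergence of $\lambda_k$ combined with the convergence of $F^k_x$, $j^k_x$. The main obstacle is the third term involving $\mu_k$. The idea is to view it as the duality pairing between $\mu_k \in L^\infty_w(I;\MM(I_\tau))^n$ and the test function $t \mapsto (F^k_y(t)^\trp \lambda_k(t)) \otimes v(\cdot) \in L^1(I;C(I_\tau))^n$ in the sense of \cref{thm:dual_of_l1c} (componentwise). One shows that this test function converges strongly in $L^1(I;C(I_\tau))^n$ to $t \mapsto (F^*_y(t)^\trp \lambda(t)) \otimes v(\cdot)$: the factor $v$ is fixed in $C(I_\tau)^n$, while $F^k_y(t)^\trp \lambda_k(t) \to F^*_y(t)^\trp \lambda(t)$ in $L^1(I;\R^n)$ by the uniform $L^\infty$-bounds combined with a.e.\ convergence. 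Combining strong $L^1(I;C(I_\tau))^n$-convergence with the weak-$\star$ convergence $\mu_k \weaklystar \mu$ yields
\begin{equation*}
 \int_0^T \lambda_k(t)^\trp F^k_y(t)\,\langle \mu_k(t,\cdot)\odot v\rangle\,\dt \;\longrightarrow\; \int_0^T \lambda(t)^\trp F^*_y(t)\,\langle \mu(t)\odot v\rangle\,\dt,
\end{equation*}
which gives the adjoint equation \eqref{eq:nonsmooth_adjoint}.

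Finally, I would pass to the limit in the regularized maximum principle \eqref{eq:maximum-principle} to obtain \eqref{eq:nonsmooth-maximum-principle}. Fix any admissible control $u$. On the right-hand side, $F(x_k(t),\lie_k(x_{k,t}),u(t)) \to F(x^*(t),\max x^*_t,u(t))$ pointwise a.e.\ and is bounded (by \cref{lem:l1norm} and uniform bounds on $x_k$), and $j(t,x_k(t),u(t)) \to j(t,x^*(t),u(t))$ pointwise with a dominating bound, so dominated convergence applies; the extra $L^2$-penalty term satisfies $(u_k-u^*)^\trp u \to 0$ in $L^1$ since $u_k \to u^*$ strongly in $L^2$. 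The analogous convergences on the left-hand side (with $u$ replaced by $u_k$, using the weak lower semicontinuity from \cref{lem:Jliminf} only if needed, or direct dominated convergence since $u_k \to u^*$ strongly) together with the strong convergence of $\lambda_k \to \lambda$ in $L^1$ yield \eqref{eq:nonsmooth-maximum-principle}, completing the proof.
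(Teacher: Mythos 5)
Your proposal is correct and follows essentially the same route as the paper: pass to the limit $k\to\infty$ in the regularized optimality system using the bounds from \cref{thm:boundedness_adjoints} and \cref{lem:liepk}, the weak-$\star$ compactness of $(\mu_k)$ from \cref{lem:convergence}, and a weak-times-strong pairing for the $\mu_k$-term in the adjoint equation. The only (immaterial) differences are that you invoke Helly's selection theorem where the paper uses the compact embedding $W^{1,1}\embeds L^p$ plus Banach--Alaoglu in $\MM(I;\R^n)$, and that you phrase the convergence of the third adjoint term as a duality pairing in $L^1(I;C(I_\tau))^n$ rather than as weak-$\star$ convergence of $t\mapsto\langle\mu_k(t)\odot v\rangle$ in $L^\infty(I;\R^n)$.
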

\begin{proof}
 Let $(x_k,u_k)$ be a sequence of global solutions of the regularized optimal control problem as considered in \cref{sec52}.
 Then $x_k\to x^*$ in $C(I;\R^n)$ and $u_k\to u^{\added{*}}\in L^2(I;\R^m)$.
 For sufficiently large $k$ the requirements of \cref{thm:optimality_condition_regularized} are satisfied. Hence,
 there is a sequence $(\lambda_k)$ in $W^{1,\infty}(I;\R^n)$ such that
 \eqref{eq:adjoint_regularized}--\eqref{eq:maximum-principle} is satisfied.
 In addition, the weak formulation of the adjoint equation \eqref{eq:adjoint_weak} holds.
 By \cref{thm:boundedness_adjoints}, the sequence $(\lambda_k)$ is bounded in $W^{1,1}(I;\R^n)$.
 \added{We recall the definition of $\mu_k$ from \eqref{eq:def_mu_k},}
\begin{equation*}
 \mu_k(t,s) = \lie_k'(x_{k,t})(s), s\in I_\tau, \ t\in I.
\end{equation*}
Then by \cref{lem:liepk}, the sequence $(\mu_k)$ is bounded in $L^\infty(I;L^1(I_\tau))^n$.
Using
\added{the compact embedding $W^{1,1}(I; \R^n) \hookrightarrow L^p(I; \R^n)$, $p < \infty$,
the Banach-Alaoglu theorem in $\MM(I;\R^n) = C(I;\R^n)\dualspace$,}
and \cref{lem:convergence},
there is (after extracting subsequences if necessary) $\lambda\in BV(I;\R^n)$ and $\mu$ such that
\begin{alignat}{2}
 \lambda_k &\to \lambda &&\text{ in } L^p(I;\R^n) \quad \forall p<\infty, \\
 \lambda_k' &\weaklystar \lambda' &&\text{ in } \MM(I;\R^n) = C(I;\R^n)\dualspace, \\
 \mu_k &\weaklystar \mu &&\text{ in }
  L^1(I; C(I_\tau))\dualspace
  =
  L^\infty_{\added{w}}(I; \MM(I_\tau)).
\end{alignat}
Passing to the limit in the maximum principle \eqref{eq:maximum-principle} to get \eqref{eq:nonsmooth-maximum-principle} is straightforward.
In the weak formulation of the adjoint equation \eqref{eq:adjoint_weak}, let us argue the convergence of the third term.
To this end, let $v\in C(I_\tau;\R^n)$ be given with $v(s)=0$ for all $s\in[-\tau,0]$.
Due to the definition of $\mu_k$, we have
\[
 \int_0^T
  \lambda_k(t)^\trp F^k_y(t)
		\frac{\int_{t-\tau}^t \exp(k \, x_k(s)) \odot v(s) \, \ds}
		{\int_{t-\tau}^t \exp(k \, x_k(\hat s)) \, \d\hat s}
  \dt
  =
 \int_0^T
  \lambda_k(t)^\trp F^k_y(t)
		\langle \mu_k(t) \odot v\rangle
  \dt.
\]
By the convergence properties above, we have that the functions $t\mapsto \langle \mu_k(t) \odot v\rangle$ converge weak-$\star$ in
$L^\infty(I\added{;\R^n})$ to $t\mapsto \langle \mu(t) \odot  v\rangle$. This allows the passage to the limit in the adjoint equation to obtain  \eqref{eq:nonsmooth_adjoint}.
The subdifferential condition is a consequence \added{of} \cref{lem:convergence}.
\end{proof}

\added{
\begin{remark}
 It is well known that in the limiting process to obtain the optimality system of the original system certain information are lost.
 When inspecting the system provided by \cref{thm:opt_sys}, the only place with possible ambiguity is the inclusion
 $  \mu(t)  \in  \partial \max x^*_t$ for such $t$ where the maximum of $x^*_{i,t}$ is attained at more than one place.
 Pretending that the map $u\mapsto x$ is directional differentiable and following the arguments in \cite{ChristofMeyerWaltherClason2018}
 one obtains  the condition
 \[
  [ F_y^*(t)^\trp \lambda(t) ]_i \le 0
\]
for all $i$ and $t$ such that the maximum of $x_{i,t}$ is not unique.
This condition is not included in the result of \cref{thm:opt_sys}.
A rigorous derivation of such a condition is beyond the scope of this paper and is subject of future work.
\end{remark}
}

\subsection{Continuity properties of the adjoint}
\label{subsec:jump_adoint}

In this subsection, we analyze the continuity of the adjoint state $\lambda$.
Our first result  gives an expression for the difference between limits from the left and right.
Recall that $\lambda$ has bounded variation
and this ensures the existence of these one-sided limits.

\begin{theorem}\label{lem66}
 Let $s_0\in (0,T)$ be given.
 Then it holds
 \[
 \lambda_i(s_0+) - \lambda_i(s_0-) = \int_0^T \lambda(t)^\trp F_y^*(t) e_i \cdot \mu_i(t)(\{s_0\}) \,\dt,
 \added{
 \quad i=1,\ldots,n
 }
 ,
 \]
 where $ \lambda_i(s_0+)$ and $\lambda_i(s_0-) $ denote the limits from the right and from the left of $\lambda_i$ at $s_0$, respectively,
 \added{
 and where $e_i$ is the $i$-th unit vector.
 }
\end{theorem}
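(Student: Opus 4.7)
The plan is to extract the jump at $s_0$ by testing the weak adjoint equation \eqref{eq:nonsmooth_adjoint} with a sequence of continuous tent functions that concentrate at $s_0$. Specifically, for each small $\eps > 0$ with $[s_0 - \eps, s_0 + \eps] \subset (0, T)$, let $\psi_\eps \in C(I_\tau)$ be supported in $[s_0 - \eps, s_0 + \eps]$ with $\psi_\eps(s_0) = 1$ and linear on each of the two halves; set $v_\eps := e_i \, \psi_\eps \in C(I_\tau; \R^n)$. Since $v_\eps$ vanishes on $[-\tau, 0]$, it is admissible in \eqref{eq:nonsmooth_adjoint}.

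Four terms must then be analyzed as $\eps \to 0$. For the Stieltjes term $\int_0^T \psi_\eps \, \d\lambda_i$, I would recall that $\d\lambda$ arises as the weak-$\star$ limit in $\MM(I)$ of $\lambda_k' \, \dt$, so that integration by parts against the piecewise linear $\psi_\eps$ (using that $\psi_\eps$ vanishes at the endpoints of $I$ for small $\eps$) yields $-\int_0^T \lambda_i \, \psi_\eps' \, \dt$; the explicit form of $\psi_\eps'$ as $\pm 1/\eps$ on the two slopes gives, via the very definition of the one-sided limits of the BV function $\lambda_i$, convergence to $\lambda_i(s_0+) - \lambda_i(s_0-)$ (up to the sign convention encoded by the adjoint equation). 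The Lebesgue integrals $\int_0^T \lambda^\trp F_x^* v_\eps \, \dt$ and $\int_0^T j_x^* v_\eps \, \dt$ vanish in the limit by dominated convergence, since $\psi_\eps \to 0$ pointwise outside $\{s_0\}$ while $\|\psi_\eps\|_\infty \le 1$ and the remaining factors lie in $L^1(I)$.

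The principal work lies in the nonlocal term
\[
 \int_0^T \lambda(t)^\trp F_y^*(t) \, e_i \, \dualo{\mu_i(t)}{\psi_\eps} \, \dt .
\]
For each $t$, the inner pairing equals $\int_{I_\tau} \psi_\eps \, \d\mu_i(t)$. Since $\psi_\eps \to \chi_{\{s_0\}}$ pointwise on $I_\tau$ with $|\psi_\eps| \le 1$ and $\mu_i(t)$ is a finite (probability) measure by \cref{cor:mu}, dominated convergence gives $\dualo{\mu_i(t)}{\psi_\eps} \to \mu_i(t)(\{s_0\})$ for a.a.\ $t$. The limit function is measurable in $t$ as the pointwise limit of the measurable functions $t \mapsto \dualo{\mu_i(t)}{\psi_\eps}$ provided by weak-$\star$ measurability of $\mu$ (\cref{def:linftymu}), and is uniformly bounded by $1$. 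Combining this with the $L^\infty$-bound on $\lambda^\trp F_y^* e_i$, a second application of dominated convergence in $t$ yields the desired limit $\int_0^T \lambda(t)^\trp F_y^*(t) \, e_i \, \mu_i(t)(\{s_0\}) \, \dt$.

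Combining these four limits inside \eqref{eq:nonsmooth_adjoint} produces the claimed jump formula. The delicate point, and the main obstacle, is the iterated application of dominated convergence in the nonlocal term: it relies crucially on the unit-mass bound $\|\mu_i(t)\|_{\MM(I_\tau)} = 1$ from \cref{cor:mu} and on the fact that the null set in the definition of $L^\infty_w(I;\MM(I_\tau))$ can be chosen independently of the test function (cf.\ \cref{def:linftymu}), ensuring that the pointwise dual pairing against $\psi_\eps$ is jointly well behaved as $\eps \to 0$.
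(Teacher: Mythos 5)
Your proposal is correct and takes essentially the same route as the paper's proof: test the weak adjoint equation \eqref{eq:nonsmooth_adjoint} with tent functions concentrating at $s_0$, integrate the $\d\lambda_i$-term by parts so that the slopes produce the one-sided averages converging to $\lambda_i(s_0\pm)$, kill the local terms by dominated convergence, and pass to the limit in the nonlocal term using the bound $\norm{\mu_i(t)}_{\MM(I_\tau)}=1$ from \cref{cor:mu}. The only cosmetic differences are that the paper uses a plateau-shaped test function (equal to $1$ on a shrinking interval around $s_0$) and obtains the pointwise convergence $\dualo{\mu_i(t)}{v_j}\to\mu_i(t)(\{s_0\})$ via the squeeze $\chi_{\{s_0\}}\le v_j\le\chi_{[s_1^j-\eps,s_2^j+\eps]}$ rather than your direct dominated-convergence argument with respect to the measure $\mu_i(t)$; both are equally valid.
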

\begin{proof}Let $(s_1^j)$, $(s_2^j)$, $(\epsilon_j)$ be sequences such that $s_1^j,s_2^j\in I$ for all $j$, $s_1^j \nearrow s_0$,
 $s_2^j \searrow s_0$, and $\epsilon_j\searrow0$.
 Let $(v_j)$ be the sequence of piecewise linear functions in $C(I)$ defined by $\supp v_j =[s_1^j-\epsilon_j,s_2^j+\epsilon_j]$
 and $v_j(s_1^j)=v_j(s_2^j)=1$ for all $j$, and
 with kinks in $s_1^j-\epsilon_j$, $s_1^j$, $s_2^j$ and $s_2^j+\epsilon_j$.
Testing \eqref{eq:nonsmooth_adjoint} with $v_j \cdot e_i$,
 yields
\[
\int\limits_{0}^T\d\lambda_i(t)v_j(t) =\int_0^T  \big( j_x^*(t)- \lambda(t)^\trp F_x^*(t) \big) e_i v_j(t) \, \dt
- \int_0^T \lambda(t)^\trp F_y^*(t) \langle \mu(t) \odot e_i v_j \rangle \, \dt.
\]
Here, the second integral vanishes for $j\to\added{\infty}$.
For the left-hand side we have
\begin{align*} \label{eq: lr-limit}
\int\limits_{0}^T\d\lambda_i(t)v_j(t) &= - \int_{0}^{T}\lambda_i(t) v_j'(t) \, \dt
= -\frac{1}{\eps_j} \int_{s_1^j-\eps_j}^{s_1^j} \lambda_i(t)\; \dt +
   \frac{1}{\eps_j} \int_{s_2^j}^{s_2^j+\eps_j} \lambda_i(t)\; \dt\\
&\overset{j \to \added{\infty}}{\longrightarrow} \lambda_i(s_0+) - \lambda_i(s_0-).
\end{align*}
It remains to prove convergence for the third integral. We will use \added{the} dominated convergence theorem. First, we have the integrable
 bound
 \[
 \left| \lambda(t)^\trp F_y^*(t) \langle \mu(t) \odot e_i v_j \rangle \right|
 \le C \|\mu(t)\|_{\mathcal{M}(I_\tau)}
 \]
 since \deleted{$\lambda^*$}\added{$\lambda$} and $F_y^*$ are bounded, and $\|v_j\|_{C(I)}\le 1$. In addition,
 \[
 \mu_i(t)(\{s_0\}) \le \langle \mu_i(t), v_j\rangle \le \mu_i(t)([s_1^j-\epsilon,s_2^j+\epsilon])
 \]
 by the non-negativity of $\mu(t)$ and $\chi_{\{s_0\}}\le v_j \le \chi_{[s_1^j-\epsilon,s_2^j+\epsilon]}$. This proves the pointwise convergence $\langle \mu_i(t), v_j\rangle \to  \mu_i(t)(\{s_0\})$.
 The dominated convergence theorem yields
 \[
  \lim_{j\to\infty}
 \int_0^T \lambda(t)^\trp F_y^*(t) \langle \mu(t) \odot e_i v_j \rangle \,\dt = \int_0^T \lambda(t)^\trp F_y^*(t)e_i \mu_i(t)(\{s_0\}) \,\dt,
 \]
and the claim is proven.
\end{proof}

This shows that $\lambda_i$ can be discontinuous only for those $s_0$, for which $\mu_i(t)(\{s_0\})$
is non-zero on a set of positive measure.
Due to the subdifferential condition in \cref{thm:opt_sys},
this is equivalent to the statement that $\max x_{i,t}^*= x_i^*(s_0)$
for $t$ in a set of positive measure.

\begin{corollary}
  \label{lem:jumps}
  \added{
  Let $i\in \{1,\ldots,n\}$.
  }
	\begin{enumerate}
		\item Let $s_0 \in (0,T)$ be such that $s_0 \notin \operatorname{argmax}x_{i,t}^*$ for almost all $t\in I$. Then $\lambda_i$ is continuous in $s_0$.
		\item Let $s_0 \in (0,T)$ be not a local maximum of $x_i^*$. Then $\lambda_i$ is continuous in $s_0$.
		\item Let $s_0 \in I$ be a strict local maximum of $x_i^*$.
		Assume that there exists a closed interval $[s_1, s_2] \subset [s_0,s_0+\tau]$
		such that $s_0$ is the unique maximum of $x^*_{i,t}$ for all $t\in [s_1,s_2]$,
		and $\max x^*_{i,t} > x_i^*(s_0)$ for all $t\in [s_0,s_0+\tau] \setminus [s_1,s_2]$,
		then it holds
		\begin{equation*}
		\lambda_i(s_0-) - \lambda_i(s_0+) = - \int_{s_1}^{s_2}  \lambda(t)^\trp F^*_y(t) e_i\,\dt.
		\end{equation*}
	\end{enumerate}
\end{corollary}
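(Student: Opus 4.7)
The plan is to derive all three parts as direct consequences of \cref{lem66}, which gives
\[
\lambda_i(s_0+) - \lambda_i(s_0-) = \int_0^T \lambda(t)^\trp F_y^*(t)\, e_i \; \mu_i(t)(\{s_0\}) \,\dt.
\]
Hence each case reduces to identifying the atomic mass $\mu_i(t)(\{s_0\})$ as a function of $t$. Two structural ingredients drive the analysis: first, $\mu_i(t)(\{s_0\})$ vanishes automatically when $s_0 \notin [t-\tau, t]$, restricting attention to $t \in [s_0, s_0+\tau]$; second, by \cref{cor:mu}, $\mu_i(t)$ is a probability measure supported in $\argmax_{s\in[t-\tau,t]} x_i^*(s)$, so $\mu_i(t)(\{s_0\}) = 0$ whenever $s_0$ does not lie in that argmax, while $\mu_i(t)(\{s_0\}) = 1$ whenever $\{s_0\}$ is exactly that argmax.

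Part (i) is then immediate: the hypothesis $s_0 \notin \argmax x_{i,t}^*$ for almost all $t$ yields $\mu_i(t)(\{s_0\}) = 0$ a.e., so the integrand in \cref{lem66} vanishes. Part (iii) is equally direct: on $[s_1,s_2]$ the uniqueness of the maximizer forces $\mu_i(t)(\{s_0\}) = 1$, while on $[s_0,s_0+\tau] \setminus [s_1,s_2]$ the strict inequality $\max x_{i,t}^* > x_i^*(s_0)$ implies $s_0 \notin \argmax$ and hence $\mu_i(t)(\{s_0\}) = 0$; for $t \notin [s_0,s_0+\tau]$ the mass is zero by the first ingredient. Substituting into \cref{lem66} and rewriting as $\lambda_i(s_0-)-\lambda_i(s_0+)$ yields the claimed jump formula.

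The step I expect to be slightly more delicate is part (ii), where one must show that if $s_0$ is not a local maximum of $x_i^*$, then the set
\[
A := \{\, t \in [s_0, s_0+\tau] : s_0 \in \argmax_{s\in[t-\tau,t]} x_i^*(s)\,\}
\]
has Lebesgue measure zero; this then reduces to the situation of part (i). The argument I have in mind: failure of the local maximum property produces a sequence $s_k' \to s_0$ with $s_k' \neq s_0$ and $x_i^*(s_k') > x_i^*(s_0)$, and by extracting a subsequence we may assume all $s_k'$ lie on one fixed side of $s_0$. For every $t \in A$, the defining inequality $x_i^*(s_0) \ge x_i^*(s)$ for $s \in [t-\tau,t]$ forces $s_k' \notin [t-\tau,t]$; combined with $s_0 \in [t-\tau,t]$, this traps $t$ in an interval of length $|s_k'-s_0|$ adjacent to an endpoint, namely $[s_0, s_k')$ if $s_k' \searrow s_0$ or $(s_k'+\tau, s_0+\tau]$ if $s_k' \nearrow s_0$. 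Intersecting over all $k$ leaves at most the two endpoints $\{s_0\}$ or $\{s_0+\tau\}$, so $|A| = 0$. Once this measure-zero statement is established, the remaining work is bookkeeping via \cref{lem66}.
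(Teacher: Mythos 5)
Your proposal is correct and follows essentially the same route as the paper: all three parts are reduced via \cref{lem66} to identifying the atomic mass $\mu_i(t)(\{s_0\})$, using the support and probability-measure properties from \cref{cor:mu}. Your treatment of part (ii) — trapping the exceptional set $A$ in shrinking intervals $[s_0,s_k')$ or $(s_k'+\tau,s_0+\tau]$ after fixing the side of the approximating sequence — is just a slightly more explicit rendering of the paper's case distinction, which concludes directly that $\mu_i(t)(\{s_0\})=0$ for all $t\neq s_0$ (resp.\ $t\neq s_0+\tau$).
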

\begin{proof}
\added{
The proof relies on the inclusion $ \supp(\mu(t)_i)  \subset  \argmax_{s \in [t-\tau,t]} x_i(s)$,  $i=1,\ldots,n$, provided by \cref{cor:mu}.
}
	\begin{enumerate}
		\item
		By assumption, the support of $\mu_i(t)$ does not contain $s_0$ for almost all $t\in I$. Hence, it holds
		$\mu_i(t)(\{s_0\})=0$  for almost all $t\in I$,  and we obtain the continuity of $\lambda_i$ at $s_0$ by \cref{lem66}.

		\item
		Due to the assumption, there exists a sequence $(t^j)_j \subset I$ with $t^j \to s_0$ and $x_i^*(t^j)> x_i^*(s_0)$.
		Suppose that there is a subsequence such that (without relabelling) $t^j>s_0$ for all $j\in \N$.
		Then it follows $\max x_t > x(s_0)$ and $\mu_i(t)(\{s_0\})=0$ for all $t>s_0$, which implies $\mu(t)(\{s_0\})=0$ for all $t\ne s_0$.
		If there is a subsequence such that (without relabelling) $t^j<s_0$ for all $j\in \N$,
		then $\mu_i(t)\{s_0\}=0$ for all $t<s_0+\tau$, which implies $\mu(t)(\{s_0\})=0$ for all $t\ne s_0 + \tau$.
		Hence, the claim follows from \cref{lem66}.

		\item
		Due to the assumptions, it holds $\mu_i(t) =\delta_{s_0}$ for all $t\in [s_1,s_2]$.
		In addition, $\mu_i(t)(\{s_0\}) =0$ for all $t \not\in [s_1,s_2]$. The claim now follows from \cref{lem66}.

	\end{enumerate}
\end{proof}

\section{Numerical experiment}
\label{sec:numerics}
In this final section,
we present some numerical results for
the regularized optimal control problems
\begin{align*}
\min\quad  &\frac{\alpha}{2} \| x - x_d \|_{L^2(I)}^2 + \frac{\beta}{2} \| u \|_{L^2(I)}^2 \\
\text{s.t.}\quad &\dot{x}(t) = x(t) - 2\operatorname{LIE}_k(x_t) + u(t), \qquad t\in I,  \\
&x\vert_{[-\tau,0]} = 0,\\
&-5\leq u(t) \leq 5, \qquad t\in I.
\end{align*}
In particular,
we use the parameters
\begin{equation*}
\alpha = 100,\quad \beta = 0.1, \quad T= 3, \quad \tau = 0.2.
\end{equation*}
The desired state is given by
		\begin{equation*}
			x_d(t):=
			\begin{cases}
				\frac{1}{2} - \vert t- \frac{1}{2} \vert & 0\leq t \leq 1,\\
				0 & 1< t < 2,\\
				\vert t- \frac{5}{2} \vert - \frac{1}{2} & 2\leq t \leq 3=T.
			\end{cases}
		\end{equation*}
\added{
For this problem, the maximum principle \eqref{eq:nonsmooth-maximum-principle} is equivalent to the equation $\lambda - \beta u^*=0$.
}

\added{
State and adjoint equation were solved with the explicit Euler method with uniform step-size $\Delta t$.
In order to compute the $\lie$-operator applied to the discretized state $x^h$, we employed the following strategy.
Let $N$ such that $\tau = N \cdot \Delta t$.
First, the time-discretization leads to an quadrature rule to approximate the integrals
\begin{equation*}
 \lie_k(x^h_{t_j})
 =
 \frac1k \, \log\parens[\Big]{\int_0^\tau \exp\parens[\big]{k \, x^h_{t_j}(s)} \, \ds}
 \approx
 \frac1k \, \log\parens[\Big]{\Delta t\sum_{i=0}^{N-1}    \exp\parens[\big]{k \, x^h(t_{j-i})} }
 .
\end{equation*}
In order to avoid floating point overflow, we used the shifted version of LogSumExp.
Let us denote $x^h_{j,\max}:=\max_{i\in \{0,\ldots,N-1\}} x^h(t_{j-i})$. Then it holds
\[
  \frac1k\log\parens[\Big]{\Delta t\sum_{i=0}^{N-1}    \exp\parens[\big]{k \, x^h(t_{j-i})} }
 = x^h_{j,\max} + \frac1k \log\parens[\Big]{\Delta t\sum_{i=0}^{N-1}    \exp\parens[\big]{k \, (x^h(t_{j-i})- x^h_{j,\max}}) }.
\]
We refer to \cite{blanchard2019accurate} for a floating point rounding error analysis of this shifted LogSumExp computation.
The discrete adjoint equations were obtained by transposing the linearized time-discrete state equations.
}

The optimization problem is solved by the projected gradient method
using the Armijo step size rule.
As initial guess for the control, we choose the zero function. We present
numerical results for time discretization step-size
$\Delta t = 10^{-4}$ and regularization parameters $k = 10^6$.
The computed solutions of the regularized and discretized optimal control problem
can be found in \cref{fig:1}.
\begin{figure}[ht]
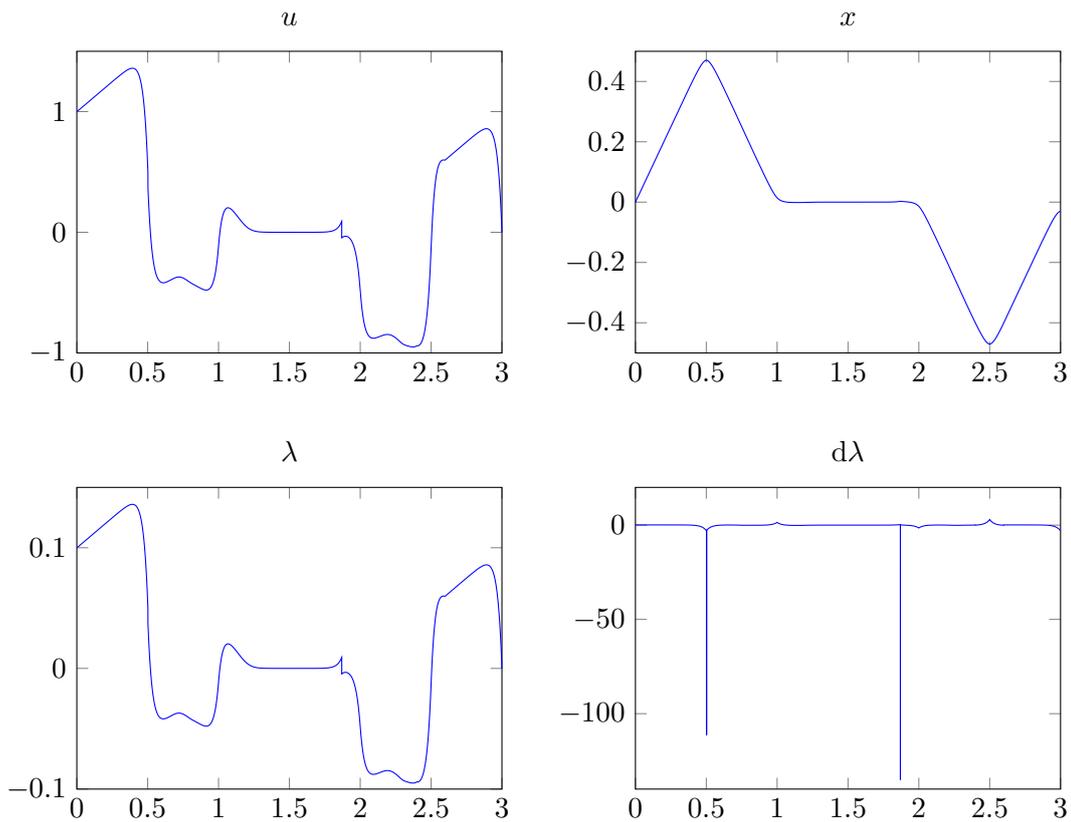

	\begin{minipage}[]{.5\textwidth}
		\hfill\input{plot_u.tikz}
	\end{minipage}%
	\begin{minipage}[]{.5\textwidth}
		\hfill
%
%
\begin{tikzpicture}

\begin{axis}[%
width=0.761\textwidth,
height=4cm,
at={(0\textwidth,0cm)},
scale only axis,
xmin=0,
xmax=3,
ymin=-0.5,
ymax=0.5,
axis background/.style={fill=white},
title style={font=\bfseries},
title={$x$},
xtick distance=.5
]
\addplot [color=blue, forget plot]
  table[row sep=crcr]{%
0	0\\
0.3426	0.342058930494952\\
0.3784	0.377404840601293\\
0.4004	0.398762132152779\\
0.4164	0.413930415400987\\
0.429	0.425511989609623\\
0.4394	0.4347095346711\\
0.4482	0.442135788866401\\
0.4559	0.448278649242522\\
0.4626	0.45327975043124\\
0.4686	0.457421754154956\\
0.474	0.46082063716545\\
0.4789	0.463584712751609\\
0.4834	0.465809662838523\\
0.4875	0.467535635593297\\
0.4913	0.468843125312794\\
0.4949	0.469789129743426\\
0.4983	0.4703896164073\\
0.5015	0.470666454656106\\
0.5029	0.470587350319651\\
0.5062	0.47003820664942\\
0.5097	0.469158694343558\\
0.5134	0.46793130690805\\
0.5174	0.466298717372641\\
0.5217	0.46423157458332\\
0.5264	0.461649645507279\\
0.5315	0.458519804604105\\
0.5371	0.454748440112569\\
0.5433	0.450230782369616\\
0.5503	0.444775491100569\\
0.5582	0.438256839685623\\
0.5673	0.430378215835214\\
0.578	0.420735188594885\\
0.5909	0.408721509041157\\
0.6068	0.393522176517874\\
0.6276	0.373240023407489\\
0.6556	0.345533235741021\\
0.6877	0.313389456586134\\
0.7154	0.285273910512733\\
0.825	0.173693488570469\\
0.8598	0.138701024716096\\
0.8907	0.10800319739088\\
0.9137	0.085516012134085\\
0.9309	0.0690669861075004\\
0.9443	0.0566158618956831\\
0.9553	0.0467567128262534\\
0.9646	0.0387813945195097\\
0.9726	0.0322752825644592\\
0.9796	0.0269290531767901\\
0.9858	0.0225312301860412\\
0.9914	0.0188908994049255\\
0.9965	0.0159029643671666\\
1.0011	0.0135221300768205\\
1.0057	0.011457481747926\\
1.0106	0.00956473761762622\\
1.0159	0.00782214582327212\\
1.0216	0.00624694947257742\\
1.0278	0.00482709135473458\\
1.0346	0.00355874447982485\\
1.0422	0.00242932003303453\\
1.0508	0.00143961942553883\\
1.0607	0.000589115068313717\\
1.0723	-0.000119244375960115\\
1.0863	-0.000686070459445354\\
1.1037	-0.00110317831977014\\
1.126	-0.00134919307153636\\
1.1538	-0.00137162717170014\\
1.1853	-0.00111857568372375\\
1.2766	-0.00016199292857344\\
1.3291	-3.09579728532405e-05\\
1.502	-3.96917525957718e-07\\
1.7747	0.000137215055693307\\
1.8109	0.000432241663410604\\
1.8331	0.00087295705581214\\
1.8493	0.00145770817664248\\
1.8621	0.00218537577412814\\
1.8714	0.00269757631332945\\
1.8922	0.00185795203438577\\
1.923	0.000594598335350582\\
1.937	-0.000267573833590884\\
1.948	-0.00121366298586967\\
1.9572	-0.00227465469980803\\
1.9652	-0.00346997072507493\\
1.9723	-0.00480724225538776\\
1.9787	-0.00629315668201302\\
1.9845	-0.00792191408026399\\
1.9898	-0.00969272552599154\\
1.9948	-0.0116565629876528\\
1.9995	-0.0138068283831156\\
2.004	-0.0161759726202848\\
2.0089	-0.0190747996486551\\
2.0143	-0.0225992321709612\\
2.0203	-0.0268570098930594\\
2.027	-0.0319631000691301\\
2.0346	-0.0381181158238801\\
2.0432	-0.0454494434684816\\
2.0532	-0.0543470945839419\\
2.0651	-0.0653177594724648\\
2.0798	-0.0792642997566673\\
2.0985	-0.0973998348468421\\
2.1229	-0.121464647571733\\
2.1539	-0.152436792049283\\
2.1882	-0.187093694506698\\
2.2434	-0.243356360379431\\
2.2915	-0.292681656732737\\
2.3307	-0.33283848361343\\
2.3549	-0.357205343454506\\
2.3761	-0.378174154644043\\
2.3934	-0.394915178501741\\
2.4093	-0.409908896760609\\
2.4238	-0.423248743678414\\
2.4349	-0.433099715020584\\
2.4441	-0.440907136835804\\
2.452	-0.447257423984881\\
2.4589	-0.452456552519822\\
2.465	-0.45671529508814\\
2.4705	-0.460224273906909\\
2.4755	-0.463089454995385\\
2.4801	-0.465404795186325\\
2.4843	-0.467208266205023\\
2.4882	-0.468579384133586\\
2.4918	-0.469551495748152\\
2.4952	-0.470179324042499\\
2.4984	-0.470483382517267\\
2.5014	-0.470489841426744\\
2.5045	-0.470211585561857\\
2.5078	-0.469625559248446\\
2.5113	-0.468709549620807\\
2.515	-0.467445837960104\\
2.519	-0.465775825411362\\
2.5233	-0.463669799742866\\
2.528	-0.46104597930261\\
2.5331	-0.457870002175919\\
2.5387	-0.454045910112575\\
2.5449	-0.449465595668104\\
2.5518	-0.444012709487464\\
2.5595	-0.437566049277947\\
2.5682	-0.429911805489157\\
2.578	-0.420912174276922\\
2.5889	-0.41052035010679\\
2.6007	-0.398885428499335\\
2.8544	-0.145759802805332\\
2.8843	-0.116370389706748\\
2.904	-0.0973698401692342\\
2.9188	-0.0834598467290459\\
2.9307	-0.0726418638302344\\
2.9406	-0.064006377017586\\
2.949	-0.0570351277847645\\
2.9563	-0.0513236932800356\\
2.9628	-0.0465809290366836\\
2.9686	-0.0426838635820186\\
2.9739	-0.0394551051557857\\
2.9787	-0.0368546883797496\\
2.9831	-0.0347858392089839\\
2.9871	-0.0332053581457901\\
2.9909	-0.0320050979519286\\
2.9944	-0.0311920261093004\\
2.9977	-0.0307125646878403\\
3	-0.030559742433407\\
};
\end{axis}
\end{tikzpicture}%
	\end{minipage}%
	\\[.5cm]
	\begin{minipage}[]{.5\textwidth}
		\hfill\input{plot_lambda.tikz}
	\end{minipage}%
	\begin{minipage}[]{.5\textwidth}
		\hfill\input{plot_dlambda.tikz}
	\end{minipage}%
	\caption{Plots for control $u$, state $x$, adjoint $\lambda$ and its time derivative $\d\lambda$ }
	\label{fig:1}
\end{figure}
As one can see from the plot of $\d\lambda$ in \cref{fig:1}, the adjoint variable $\lambda$ has jumps
at the strict local maxima of the optimal state $x$ at time points $t\approx 0.50$ and $t\approx 1.87$.

\ifbiber
	\printbibliography
\else
	\bibliographystyle{plain_abbrv}
	\bibliography{references}
\fi
\end{document}